\setlist{nosep}
\setlist[enumerate]{label=\((\roman*)\),ref=(\roman*)}
\newcommand{\todo}[1]{\fxnote{#1}}
\newcommand{\field}[1]{\mathbb #1}
\newcommand{\CC}{\field C}
\newcommand{\QQ}{\field Q}
\newcommand{\NN}{\field N}
\newcommand{\sHom}{\mathcal{H}om}
\newcommand{\sTor}{\mathcal T\!or}
\newcommand{\strshf}[1]{\mathscr{O}_{#1}}
\DeclareMathOperator{\Spec}{Spec}
\newcommand{\PP}{\field P}
\DeclareMathOperator{\Ex}{Ex}
\renewcommand{\AA}{\field A}
\DeclareMathOperator{\pr}{pr}
\DeclareMathOperator{\skel}{sk}
\DeclareMathOperator{\Hom}{Hom}
\DeclareMathOperator{\Sym}{Sym}
\DeclareMathOperator{\Sing}{Sing}
\DeclareMathOperator{\Reg}{Reg}
\DeclareMathOperator{\Tot}{Tot}
\newcommand{\Sch}{\mathbf{Sch}}
\newcommand{\Ab}{\mathbf{Ab}}
\newcommand{\inj}{\hookrightarrow}
\newcommand{\isom}{\xrightarrow{\sim}}
\newcommand{\cC}{\mathcal{C}}
\newcommand{\cD}{\mathcal{D}}
\newcommand{\sE}{\mathscr{E}}
\newcommand{\sF}{\mathscr{F}}
\newcommand{\sG}{\mathscr{G}}
\newcommand{\sI}{\mathscr{I}}
\newcommand{\sJ}{\mathscr{J}}
\newcommand{\sK}{\mathscr{K}}
\newcommand{\sL}{\mathscr{L}}
\newcommand{\sO}{\mathscr{O}}
\DeclareMathOperator{\Bl}{Bl}
\DeclareMathOperator{\snc}{snc}
\DeclareMathOperator{\CM}{CM}
\DeclareMathOperator{\cone}{cone}
\DeclareMathOperator{\Eq}{Eq}
\newcommand{\lsp}[1]{(#1, \Delta_{#1})}
\newcommand{\dcx}[1]{\mathcal{D}(#1)}
\newcommand{\Dbcoh}[1]{D_{\mathrm{coh}}^b(#1)}
\newcommand{\lstrshf}[1]{\mathscr{O}_{#1}(-\Delta_{#1})}
\newcommand{\lcanshf}[1]{\omega_{#1}(\Delta_{#1})}
\numberwithin{equation}{section}
\theoremstyle{plain}
\newtheorem{theorem}[equation]{Theorem}
\crefname{theorem}{Theorem}{Theorems}
\newtheorem*{theorem*}{Theorem}
\newtheorem{lemma}[equation]{Lemma}
\crefname{lemma}{Lemma}{Lemmas}
\crefname{proposition}{Proposition}{Propositions}
\newtheorem{conjecture}[equation]{Conjecture}
\crefname{conjecture}{Conjecture}{Conjectures}
\newtheorem{corollary}[equation]{Corollary}
\crefname{corollary}{Corollary}{Corollaries}
\crefname{problem}{Problem}{Problems}
\theoremstyle{plain}
\newtheoremstyle{named}{}{}{\itshape}{}{\bfseries}{.}{.5em}{#1 \thmnote{#3}}
\theoremstyle{named}
\theoremstyle{definition}
\newtheorem{definition}[equation]{Definition}
\crefname{definition}{Definition}{Definitions}
\crefname{variant}{Variant}{Variants}
\crefname{notation}{Notation}{Notations}
\crefname{convention}{Convention}{Conventions}
\newtheorem{claim}[equation]{Claim}
\crefname{claim}{Claim}{Claims}
\newtheorem{slogan}[equation]{Slogan}
\crefname{slogan}{Slogan}{Slogans}
\crefname{fact}{Fact}{Facts}
\crefname{assumption}{Assumption}{Assumptions}
\crefname{hypothesis}{Hypothesis}{Hypotheses}
\crefname{construction}{Construction}{Constructions}
\crefname{calculation}{Calculation}{Calculations}
\theoremstyle{remark}
\newtheorem{remark}[equation]{Remark}
\crefname{remark}{Remark}{Remarks}
\crefname{observation}{Observation}{Observations}
\newtheorem{example}[equation]{Example}
\crefname{example}{Example}{Examples}
\newtheorem*{example*}{Example}
\crefname{question}{Question}{Questions}
\newtheorem*{question*}{Question}
\crefname{warning}{Warning}{Warnings}
\crefname{enumi}{}{}
\crefname{figure}{Figure}{Figure}
\title{Higher direct images of snc ideal sheaves}
\author{Charles Godfrey\\
\href{mailto:godfrey.cw@gmail.com}{\texttt{godfrey.cw@gmail.com}}\\
}
\date{\today} 
\begin{document}
\maketitle
\thispagestyle{fancy}
\renewcommand{\headrule}{}
\renewcommand{\footrulewidth}{0.5pt}
\fancyhead[L,C,R]{}
\fancyfoot[L]{\footnotesize This work was completed while the
  author was a PhD student in the University of Washington Department of Mathematics. The author was partially supported by the University of
  Washington Department of Mathematics Graduate Research Fellowship, and by the
  NSF grant DMS-1440140, administered by the Mathematical Sciences  Research
  Institute, while in residence at MSRI during the program Birational Geometry
  and Moduli Spaces.}
\fancyfoot[C,R]{}
\abstract{%
  We prove invariance results for the cohomology groups of ideal sheaves of simple
  normal crossing divisors under (a restricted class of) birational morphisms of
  pairs in arbitrary characteristic, assuming a conjecture regarding the existence
  of normal Cohen-Macaulayfications. As an application, we extend some
  foundational results in the theory of rational pairs that were previously known
  only in characteristic 0.
}

\listoffixmes

\todo{Need to address the normality issues with SK's ratl. sings, add the conjecture on normal Macaulayfications DONE. Point out cases where the conjecture is known (e.g. char 0) DONE. Point out that in char 0 our thm. on thrifty proper birational equivalences is still new DONE, and we give a new proof of the ``all for one'' thing (already DONE).}

\section{Introduction}

A foundational problem in birational geometry, posed by Grothendieck in his 1958
ICM address \cite[Problem B]{MR0130879}, asked whether for every proper
birational morphism of non-singular projective varieties \(f\colon X \to Y\),
\[R^i f_* \strshf{X} = 0 \text{  for  } i>0. \]
In characteristic 0 this was answered
affirmatively by Hironaka as a corollary of resolution of singularities \cite[\S
  7 Cor. 2]{MR0199184}.
In characteristic \(p >0\), where resolutions of singularities are not known to
exist, answering Grothendieck's question proved much harder, remaining open
until 2011 when Chatzistamatiou and R\"ulling proved the following theorem.

\begin{theorem}[{\cite[Thm. 3.2.8]{MR2923726}, see also \cite[Thm. 1.1]{MR3427575}, \cite[Thm. 1.19]{kovacsRationalSingularities2022}}]
  \label{thm:cr-hdi-struct}
  Let \(k\) be a perfect field and let \(S\) be a scheme. Suppose \(X\) and \(Y\) are two separated
  finite type \(S\)-schemes which are
  \begin{enumerate}
    \item smooth over \(k\) and
    \item \textbf{properly birational} over \(S\) in the sense that
          there is a commutative diagram
          \begin{equation}
            \label{eq:1}
            \begin{tikzcd}
              & Z \arrow[dl, "r"'] \arrow[dr, "s"]  & \\
              X \arrow[dr, "f"']  \arrow[rr, phantom, "\circlearrowleft"] &   & Y \arrow[dl, "g"] \\
              & S & \\
            \end{tikzcd}
          \end{equation}
          with \(r\) and \(s\) proper birational morphisms.
  \end{enumerate}
  Then, there are isomorphisms of sheaves
  \begin{equation}
    \label{eq:cr-correspondence}
    R^{i} f_{*} \sO_X \isom  R^{i}
    g_{*} \sO_Y  \text{  and  } R^{i} f_{*} \omega_X \isom  R^{i}
    g_{*} \omega_Y \text{  for all  } i.
  \end{equation}
\end{theorem}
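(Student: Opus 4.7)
The plan is to exhibit mutually inverse isomorphisms between $R^i f_* \sO_X$ and $R^i g_* \sO_Y$ (and likewise for $\omega$) via a correspondence action associated to the roof $Z$. The strategy, due to Chatzistamatiou and R\"ulling, works entirely on smooth $k$-schemes, where both pullback and a Grothendieck/duality trace are available, and it uses the cycle $Z \inj X \times_S Y$ together with its transpose to produce the two maps.

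First I would set up the sheaf-level formalism. For a proper morphism $\phi\colon Z \to X$ between smooth $k$-schemes of the same dimension, the pullback $\phi^*\colon R f_* \sO_X \to R(f\phi)_* \sO_Z$ is immediate. The Grothendieck trace supplies a pushforward $\phi_*\colon R(f\phi)_* \omega_Z \to R f_* \omega_X$; local duality on smooth schemes, identifying $R f_* \sO_X$ with the $\sO_S$-dual of $R f_* \omega_X$ up to shift, transfers this to a trace $\phi_*\colon R(f\phi)_* \sO_Z \to R f_* \sO_X$. These operations satisfy projection formulae and compatible base change, enough to define the action of an $n$-dimensional cycle $\Gamma \subset X \times_S Y$ on $R^i f_* \sO_X$ by $\pr_{Y,*}\bigl(\pr_X^*(\cdot) \smile [\Gamma]\bigr)$, and likewise for $\omega$.

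Next I would take $\Gamma$ to be the image cycle of $(r,s)\colon Z \to X \times_S Y$ and let $\Gamma^t$ denote its transpose, setting $\Phi = \Gamma^*$ and $\Psi = (\Gamma^t)^*$. The heart of the argument is to compute $\Psi \circ \Phi$ as a correspondence on $X \times_S X$: by the standard calculus of correspondences this composition is the pushforward of $[Z] \cdot [Z]$ along $X \times_S Y \times_S X \to X \times_S X$, which by the birationality of $r$ and $s$ reduces to the diagonal class $[\Delta_X]$. Since the diagonal acts as the identity, one obtains $\Psi \circ \Phi = \id$, and symmetrically $\Phi \circ \Psi = \id$. The same argument with $\omega$ in place of $\sO$ yields the second isomorphism.

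The main obstacle is the technical construction of a functorial trace $\phi_*\colon R(f\phi)_* \sO_Z \to R f_* \sO_X$ in positive characteristic, satisfying enough compatibilities to give a well-defined correspondence action. In characteristic zero one could sidestep this via Hironaka, but in characteristic $p > 0$ the trace must be built directly from duality and the Cousin/residual complex machinery, together with a verification of the degree-one relation $\phi_* \phi^* = \id$ for proper birational $\phi$ between smooth schemes. Once this foundational work is in place, the rest is formal cycle bookkeeping plus the calculus of correspondences applied to the roof.
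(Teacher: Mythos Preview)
This theorem is not proved in the paper; it is quoted from the literature (Chatzistamatiou--R\"ulling, with later refinements by Hashimoto and by Kov\'acs) and used as a black box throughout. So there is no paper-internal proof to compare against.

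Your sketch is essentially the Chatzistamatiou--R\"ulling argument and is on the right track. One point worth sharpening: the composition $\Gamma^t \circ \Gamma$ as a cycle on $X \times_S X$ is generally not literally $[\Delta_X]$ but rather the diagonal plus cycles supported over the locus where $Z \to X$ fails to be an isomorphism; one then needs a vanishing statement showing these excess (degenerate) cycles act trivially on the relevant cohomology sheaves, and this is where smoothness of $X$ and $Y$ and the precise compatibilities of the trace enter in an essential way. Aside from this gloss, your identification of the main technical burden---constructing a well-behaved proper pushforward on $Rf_*\sO$ in positive characteristic via Grothendieck duality and residual complexes, with enough functoriality to support a cycle/correspondence calculus---is accurate and is indeed the bulk of the cited work.
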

One of the primary applications of \cref{thm:cr-hdi-struct} was to extend
foundational results on rational singularities from characteristic 0 to
arbitrary characteristic (for definitions of rational resolutions and
rational singularities see \cref{def:ratl-res}).
\begin{corollary}[{\cite[Cor. 3.2.10]{MR2923726}, see also \cite[Thm. 1.4]{kovacsRationalSingularities2022}}]
  \label{cor:rat-res-all41}
  If \(S\) has a rational resolution, then every resolution of \(S\)
  is rational.
\end{corollary}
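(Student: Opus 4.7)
The plan is to reduce the corollary directly to the comparison isomorphism of \cref{thm:cr-hdi-struct}. Let $f_0\colon X_0 \to S$ be the assumed rational resolution, and let $g\colon Y \to S$ be an arbitrary resolution; I must show that $g$ too is rational. Both $X_0$ and $Y$ are smooth over the perfect base field $k$ as resolutions of singularities, and both $f_0, g$ are proper birational morphisms onto $S$.

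To feed $(X_0, Y, S)$ into \cref{thm:cr-hdi-struct}, the essential input I need is a scheme $Z$ fitting into a diagram of the shape \eqref{eq:1} with proper birational legs. I would take $Z$ to be the unique irreducible component of the fiber product $X_0 \times_S Y$ dominating $S$, equivalently the closure in $X_0 \times_S Y$ of the graph of the birational map $X_0 \dashrightarrow Y$ induced by $f_0$ and $g$ over the common open subset of $S$ on which both are isomorphisms. The projections $r\colon Z \to X_0$ and $s\colon Z \to Y$ are birational by construction, and they are proper because $f_0, g$ are proper, so the projections $X_0 \times_S Y \to X_0$ and $X_0 \times_S Y \to Y$ are proper as well.

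With this $Z$ in hand, \cref{thm:cr-hdi-struct} supplies canonical isomorphisms
\[R^i (f_0)_* \sO_{X_0} \isom R^i g_* \sO_Y \quad\text{and}\quad R^i (f_0)_* \omega_{X_0} \isom R^i g_* \omega_Y\]
for every $i$. Whichever conditions are used to define a rational resolution (at minimum $R(f_0)_* \sO_{X_0} = \sO_S$, and possibly also $R^i (f_0)_* \omega_{X_0} = 0$ for $i > 0$) now transfer immediately from $f_0$ to $g$ through these isomorphisms, so $g$ is itself a rational resolution.

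Almost all the work has already been done in \cref{thm:cr-hdi-struct}. The only residual step, constructing the common roof $Z$, is elementary and characteristic-independent: no resolution of singularities is required. The main obstacle one might have feared --- needing a smooth $Z$ --- is precisely what the Chatzistamatiou-R\"ulling theorem eliminates, since it allows $Z$ to be arbitrary, so this concern never materializes.
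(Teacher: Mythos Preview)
Your proposal is correct and follows the standard derivation of this corollary from \cref{thm:cr-hdi-struct}. Note that the paper does not supply its own proof of \cref{cor:rat-res-all41}: it is stated as a known consequence, with citations to \cite[Cor.~3.2.10]{MR2923726} and \cite[Thm.~1.4]{kovacsRationalSingularities2020}, and your argument is essentially how those references deduce it.

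One small point worth making explicit: the isomorphisms in \eqref{eq:cr-correspondence} are isomorphisms of cohomology sheaves, not a priori identified with the natural maps $\sO_S \to Rg_*\sO_Y$. For the vanishing conditions $R^i g_*\sO_Y = 0$ and $R^i g_*\omega_Y = 0$ ($i>0$) this is irrelevant, but for the degree-zero statement $g_*\sO_Y = \sO_S$ you should observe that $S$ is normal (since $\sO_S = (f_0)_*\sO_{X_0}$ with $X_0$ normal), so $g_*\sO_Y = \sO_S$ holds automatically for any resolution $g$. With that remark your argument is complete.
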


In this article, we prove analogues of \cref{thm:cr-hdi-struct,cor:rat-res-all41} for pairs.
\begin{definition}[{slightly more general version of \cite[Def.
          1.5]{MR3057950}}]
  \label{def:pair}
  In what follows a \textbf{pair \(\lsp{X}\)} will mean a reduced,
  equidimensional excellent scheme \(X\) admitting a dualizing complex together
  with a \(\QQ\)-Weil divisor \(\Delta_{X} = \sum_i a_i D_i\) on \(X\) such that
  no irreducible component \(D_i \) of \(\Delta_X\) is contained in
  \(\Sing(X)\).
\end{definition}
\begin{definition}
  \label{def:snc}
  A \emph{simple normal crossing pair} is an equidimensional, regular excellent
  scheme \(X\) together with a reduced effective divisor \(\Delta_X = \sum_i
  D_i\)  such that for every subset \(J \subseteq \{1, \dots , N \}\) the
  scheme-theoretic intersection
  \[ D_J := \cap_{j \in J} D_j \subseteq X \]
  is regular of codimension \(|J|\).
\end{definition}
\begin{remark}
  If \(X\) is regular as in \cref{def:snc} then it admits a dualizing complex.
  By an amazing result of Kawasaki \cite[Cor. 1.4]{MR1859029}, a noetherian ring
  admits a dualizing complex if and only if it is a homomorphic image of a
  finite-dimensional Gorenstein ring.
\end{remark}

As observed in \cite[\S 2.5]{MR3057950}, to generalize \cref{cor:rat-res-all41}
to pairs we must restrict attention to a special class of \emph{thrifty
  resolutions}.

\begin{definition}
  \label{def:abstract-strata-intro}
  A \emph{stratum} of a simple normal crossing pair \((X, \Delta_X = \sum_i D_i)\) is a connected
  (equivalently, irreducible) component of an intersection \(D_J = \cap_{j \in J}D_j\).
\end{definition}

\begin{definition}[
    {compare with \cite[Def. 2.79-2.80]{MR3057950}, \cite[\S 1,
          discussion before Def. 10]{MR3539921}}
  ]
  \label{def:abstract-thriftiness-intro}
  Let \((S, \Delta_S = \sum_i D_i)\) be a pair,
  and assume
  \(\Delta_S\) is reduced and effective. A separated, finite type birational
  morphism \(f: X \to S\) is \emph{thrifty with respect to} \(\Delta_S \) if
  and only if
  \begin{enumerate}
    \item \label{item:over-gen-pts-intro} \(f\) is an isomorphism over the
          generic point of every stratum of \(\snc(S, \Delta_S)\) and
    \item \label{item:at-gen-pts-intro} letting \(\tilde{D}_i = f^{-1}_*D_i\)
          for \(i = 1, \dots, N\) be the strict transforms of the divisors \(D_i\),
          and setting \(\Delta_X := \sum_i \tilde{D}_i\), the map \(f\) is an
          isomorphism at the generic point of every stratum of \(\snc(X,
          \Delta_X)\).
  \end{enumerate}
\end{definition}

Philosophically, a resolution \(\lsp{X} \to \lsp{S}\) is thrifty if it induces a one-to-one birational correspondence
between the \emph{strata} of \(\lsp{X} \) and \( \lsp{S}\) , thus preserving the
combinatorial geometry of the pair \(\lsp{S}\).

In order to prove our main result, \cref{thm:hdi-log-struct}, we need to assume
a conjecture regarding the existence of \emph{normal} Cohen-Macaulayfications
(\cref{conj:macify} below). To contextualize this conjecture we first recall
that requiring normality, the existence of Cohen-Macaulayfications is known.
\begin{theorem}[{\cite[Thm.
          1.6]{CesMac}, see also \cite[Thm.
          1.1]{MR1707481}}]
  \label{thm:macify}
  For every CM-quasi-excellent noetherian scheme \(X\) there exists a
  projective birational morphism \(\pi : \tilde{X} \to X\) such that
  \(\tilde{X}\) is Cohen-Macaulay and \(\pi\) is an isomorphism over the
  Cohen-Macaulay locus \(\CM(X)\subset X\).
\end{theorem}
The notion of CM-quasi-excellence is a weakening of excellence introduced by
\v{C}esnavi\v{c}ius, so in particular the theorem applies to excellent
noetherian schemes. To prove our results, we require the existence of a
\(\tilde{X}\) as appearing in \cref{thm:macify} that is also normal, and it is
at least
technically very useful for us to require that the associated projective
birational morphism \(\pi \) is an isomorphism over the \emph{regular} locus
\(\mathrm{Reg}(X) \).\footnote{It is however possible that overhauling our
  proofs would allow for relaxing this condition.} Note that this is weaker than requiring \(\pi \) to be an
isomorphism over \( \CM(X) \).
\begin{conjecture}[{see also \cite[Conj. 1.1]{CesMac}, \cite[Conj.
          1.14]{kovacsRationalSingularities2022}}]
  \label{conj:macify}
  For every CM-quasi-excellent noetherian scheme \(X\) there exists a
  projective birational morphism \(\pi : \tilde{X} \to X\) such that
  \(\tilde{X}\) is Cohen-Macaulay and normal and \(\pi\) is an isomorphism over the
  regular locus \(\mathrm{Reg}(X) \subset X\).
\end{conjecture}
Due to the existence of resolution of singularities
\cite{MR0199184,temkinDesingularizationQuasiexcellentSchemes2008}, \cref{conj:macify} is known in characteristic zero. If we drop the requirement that
\(\pi\) be an isomorphism over the regular locus \(\mathrm{Reg}(X) \subset X\),
\cref{conj:macify} is known for varieties of dimension at most 4 over algebraically closed
fields by \cite[Cor. 1.8]{brodmannTwoTypesBirational1983a}. To the best of our
knowledge, \cref{thm:hdi-log-struct} is new even in characteristic 0.\footnote{Although
  it can be proved there without too much difficulty using the machinery
  developed in \cite[\S 2.5]{MR3057950}, in particular without using the
  techniques developed in this paper.}

\todo{Note that this has  different hypotheses on the base scheme than CR11. Probably one can reduce from their arbitrary \(S \) to this excellent Noetherian one by Noetherian approximation?? Should point this out.}
\begin{theorem}[{\cref{cor:complex-on-Z}}]
  \label{thm:hdi-log-struct}
  Let \(S\) be an excellent noetherian scheme and let \(\lsp{X}\) and
  \(\lsp{Y}\) be simple normal crossing pairs separated and of  finite type over
  \(S\). Suppose \(\lsp{X}\) and \( \lsp{Y}\) are properly birational over \(S\) in the
  sense that there is a commutative diagram
  \begin{equation}
    \label{eq:4}
    \begin{tikzcd}
      & \lsp{Z} \arrow[dl, "r"'] \arrow[dr, "s"]  & \\
      \lsp{X} \arrow[dr, "f"']  \arrow[rr, phantom, "\circlearrowleft"] &   & \lsp{Y} \arrow[dl, "g"] \\
      & S & \\
    \end{tikzcd}
  \end{equation}
  where \(r\), \(s\) are proper and birational morphisms, and assume \(
  \Delta_{Z} = r_{*}^{-1}\Delta_{X} = s_{*}^{-1} \Delta_{Y} \).
  If \(r\) and \(s\) are \emph{thrifty} and \cref{conj:macify} holds, then there are quasi-isomorphisms
  \begin{equation}
    \label{eq:5}
    Rf_* \lstrshf{X} \simeq Rg_* \lstrshf{Y} \text{  and  } Rf_* \lcanshf{X} \simeq Rg_* \lcanshf{Y}.
  \end{equation}
\end{theorem}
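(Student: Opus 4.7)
The plan is first to reduce the theorem to the following single-morphism statement: for any thrifty proper birational morphism $h \colon \lsp{W} \to \lsp{V}$ of simple normal crossing pairs with $\Delta_W = h_*^{-1}\Delta_V$, there are quasi-isomorphisms $Rh_* \lstrshf{W} \simeq \lstrshf{V}$ and $Rh_* \lcanshf{W} \simeq \lcanshf{V}$. Granting this, applying it separately to the two thrifty morphisms $r$ and $s$ in \eqref{eq:4} and using the identity $Rf_* \circ Rr_* = R(f \circ r)_* = R(g \circ s)_* = Rg_* \circ Rs_*$ yields the quasi-isomorphisms \eqref{eq:5}.

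To prove the single-morphism statement, I would induct on $N$, the number of irreducible components of $\Delta_V$. The base case $N = 0$ is precisely \cref{thm:cr-hdi-struct} applied to the regular schemes $W$ and $V$. For the inductive step, pick a component $D_N$ of $\Delta_V$ with strict transform $\tilde D_N$, set $\Delta_V' := \Delta_V - D_N$ and $\Delta_W' := \Delta_W - \tilde D_N$, and consider the short exact sequence
\[ 0 \to \sO_V(-\Delta_V) \to \sO_V(-\Delta_V') \to \sO_{D_N}(-\Delta_V'|_{D_N}) \to 0 \]
on $V$ together with its evident analogue on $W$. The right-hand quotients are the ideal sheaves of the SNC pairs $(D_N, \Delta_V'|_{D_N})$ and $(\tilde D_N, \Delta_W'|_{\tilde D_N})$, each having $N-1$ components on a scheme of strictly smaller dimension. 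A key technical point is that the restriction $h|_{\tilde D_N} \colon \tilde D_N \to D_N$ is thrifty between these restricted pairs: the strata of $(D_N, \Delta_V'|_{D_N})$ are exactly the strata of $(V, \Delta_V)$ that lie inside $D_N$, and similarly on $W$, so the thriftiness of $h$ descends immediately. Applying the inductive hypothesis both to $h \colon (W, \Delta_W') \to (V, \Delta_V')$ and to $h|_{\tilde D_N}$, then taking $Rh_*$ of the two short exact sequences and comparing the resulting distinguished triangles via the natural adjunction map $\sO_V(-\Delta_V) \to Rh_* \sO_W(-\Delta_W)$, a five-lemma argument in the derived category delivers $Rh_* \lstrshf{W} \simeq \lstrshf{V}$.

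The logarithmic dualizing sheaf statement then follows from Grothendieck duality. Since $h$ is proper and $W, V$ are regular of the same dimension, $h^! \omega_V \simeq \omega_W$, and since $\sO_W(-\Delta_W)$ is invertible,
\[ Rh_* \lcanshf{W} \simeq Rh_* \RsHom_W(\lstrshf{W}, \omega_W) \simeq \RsHom_V(Rh_* \lstrshf{W}, \omega_V) \simeq \RsHom_V(\lstrshf{V}, \omega_V) \simeq \lcanshf{V}, \]
the second step being duality for the proper morphism $h$ and the third the just-proved structure-sheaf case. The main obstacle throughout is the inductive step: carefully verifying that thriftiness is preserved under restriction to divisorial strata, and that the short exact sequences on $W$ and $V$ assemble into a genuine morphism of distinguished triangles under $Rh_*$, so that the derived five-lemma applies cleanly. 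When $D_N$ has several connected components meeting different strata, additional bookkeeping of strict transforms component-by-component is required, but no essentially new difficulty arises.
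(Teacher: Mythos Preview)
Your reduction step contains a fatal gap: the pair \((Z,\Delta_Z)\) is \emph{not} assumed to be a simple normal crossing pair. Only \((X,\Delta_X)\) and \((Y,\Delta_Y)\) are SNC; \(Z\) is an arbitrary scheme sitting in the roof, and \(\Delta_Z\) is merely the common strict transform. Hence your single-morphism statement, which requires both source and target to be SNC, cannot be applied to \(r\) or \(s\). The paper makes this point explicitly at the start of \S4: if \((Z,\Delta_Z)\) \emph{were} SNC, one could indeed apply the single-morphism result (their \cref{thm:smth-to-smth}) to both legs and be done, but in the absence of resolution of singularities in positive characteristic there is no way to replace \(Z\) by such a pair. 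This is the entire content of the theorem; your proposal assumes it away.

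The paper's workaround is to bypass \(Z\) altogether and instead build a semi-simplicial scheme \(Z_\bullet\) with each \(Z_i\) Cohen--Macaulay (via Kawasaki--\v{C}esnavi\v{c}ius Macaulayfication, which \emph{is} available over excellent bases), together with levelwise projective birational maps \(X_\bullet \xleftarrow{r_\bullet} Z_\bullet \xrightarrow{s_\bullet} Y_\bullet\). Kov\'acs's theorem then gives \(\sO_{X_i}\simeq Rr_{i*}\sO_{Z_i}\) and \(\sO_{Y_i}\simeq Rs_{i*}\sO_{Z_i}\) for all \(i\), and a comparison of descent spectral sequences yields a complex \(\mathscr{K}\) on \(Z\) with \(Rr_*\mathscr{K}\simeq\lstrshf{X}\) and \(Rs_*\mathscr{K}\simeq\lstrshf{Y}\).

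Your single-morphism statement itself is the paper's \cref{thm:smth-to-smth}, and your inductive proof of it is essentially sound: because the comparison map \(\lstrshf{V}\to Rh_*\lstrshf{W}\) is the \emph{natural} adjunction morphism (not one constructed by induction), the required square of distinguished triangles commutes automatically, so the five-lemma applies. This is a legitimate alternative to the paper's spectral-sequence proof of that lemma, but it does not touch the main difficulty. Your Grothendieck-duality deduction of the \(\omega_X(\Delta_X)\) statement is correct, and can be salvaged by applying duality over \(S\) (rather than over \(V\)) once the ideal-sheaf statement is known.
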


In order to state an analogue of \cref{cor:rat-res-all41}, we say what we
mean by rational singularities of pairs.

\begin{definition}[{compare with \cite[Def. 2.78]{MR3057950}}]
  \label{def:ratl-res-intro}
  Let \(\lsp{S}\) be a pair as in \cref{def:pair} and assume
  \(\Delta_S\) is reduced and effective.  A proper birational morphism \(f: X
  \to S\) is a \emph{rational resolution} if and only if
  \begin{enumerate}
    \item \label{item:ratlsings1-intro} \(X\) is regular and the strict transform
          \(\Delta_X := f^{-1}_* \Delta_S\) has simple normal crossings,
    \item \label{item:ratlsings2-intro} the natural morphism \(\sO_S(-\Delta_S) \to
          Rf_* \sO_X(-\Delta_X)\) is a quasi-isomorphism, and
  \end{enumerate}
  letting \(\omega_X = h^{-\dim X}\omega_X^\bullet \) where we use
  \(\omega_X^\bullet = f^! \omega_S^\bullet\) as a normalized dualizing complex
  on \(X\),
  \begin{enumerate}[resume]
    \item \label{item:ratlsings3-intro} \(R^i f_* \omega_X(\Delta_X) = 0\) for
          \(i>0\).
  \end{enumerate}
  The pair \(\lsp{S}\) is \emph{resolution-rational}  if and only if it
  has a thrifty rational resolution.
\end{definition}
We wish to emphasize that this is not the only definition of rational pairs
available in the literature: Schwede and Takagi adopted a different definition
in \cite{ST08}.
Here we focus on the variant of rational pairs defined in \cite[\S
  2.5]{MR3057950}, simply because it is the one to which our methods most directly
apply, however we view identifying and studying a notion of rational pairs that
simultaneously generalizes those of \cite{ST08} and \cite{MR3057950} as an
interesting question for future work. With \cref{def:ratl-res-intro} in hand,
the precise statement of our result is:

\begin{theorem}[{\cite[Cor. 2.86]{MR3057950} in characteristic 0, \cref{thm:all41} in arbitrary characteristic}]
  \label{thm:ratl-res}
  Let \(\lsp{S}\) be a pair, with \(\Delta_{S}\) reduced and effective. If
  \(\lsp{S}\) has a thrifty rational resolution \(f: \lsp{X} \to \lsp{S}\), and
  if \cref{conj:macify} holds, then
  every thrifty resolution \(g: \lsp{Y} \to \lsp{S}\) is rational.
\end{theorem}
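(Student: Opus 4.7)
The plan is to reduce Theorem~\ref{thm:ratl-res} to Theorem~\ref{thm:hdi-log-struct}. The three conditions defining a rational resolution in Definition~\ref{def:ratl-res-intro} are \ref{item:ratlsings1-intro} that the source is regular with an SNC boundary, \ref{item:ratlsings2-intro} the quasi-isomorphism $\sO_S(-\Delta_S) \simto Rf_* \lstrshf{X}$, and \ref{item:ratlsings3-intro} the vanishing $R^i f_* \lcanshf{X} = 0$ for $i>0$. Since $g$ is assumed to be a resolution, condition \ref{item:ratlsings1-intro} is automatic, so it suffices to transfer conditions \ref{item:ratlsings2-intro} and \ref{item:ratlsings3-intro} from $f$ to $g$. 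Given the quasi-isomorphisms
\[ Rf_* \lstrshf{X} \simeq Rg_* \lstrshf{Y} \quad \text{and} \quad Rf_* \lcanshf{X} \simeq Rg_* \lcanshf{Y}, \]
both conclusions follow immediately by combining with the rationality of $f$.

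The key intermediate step is therefore to produce a single pair $\lsp{Z}$ fitting the hypothesis of Theorem~\ref{thm:hdi-log-struct}: a simple normal crossing pair with proper birational \emph{thrifty} morphisms $r \colon \lsp{Z} \to \lsp{X}$ and $s \colon \lsp{Z} \to \lsp{Y}$ over $S$, subject to $\Delta_Z = r_{*}^{-1}\Delta_X = s_{*}^{-1}\Delta_Y$. A natural candidate is obtained by letting $W$ be the closure of the graph of the birational map $X \dashrightarrow Y$ inside $X \times_S Y$, equipped with the reduced strict transform of $\Delta_X$ (which agrees with that of $\Delta_Y$ since both $f$ and $g$ are thrifty over $\lsp{S}$). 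One then refines $W$ by a sequence of blowups at smooth centers until the resulting total boundary becomes SNC. Because blowing up a smooth center transverse to an SNC divisor preserves both regularity and the SNC property in any characteristic, and because the birational map $X \dashrightarrow Y$ is already an isomorphism over a dense open subset of $S$ containing the generic points of every stratum of $\lsp{X}$ and $\lsp{Y}$, this refinement does not touch the generic points of any stratum, so $r$ and $s$ remain thrifty in the sense of Definition~\ref{def:abstract-thriftiness-intro}.

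Once $\lsp{Z}$ is in hand, Theorem~\ref{thm:hdi-log-struct} delivers the desired quasi-isomorphisms, and the proof is complete. The main obstacle, and the reason the result is subtler in positive characteristic than in the characteristic-zero treatment of \cite[Cor. 2.86]{MR3057950}, is the construction of this common SNC pair $\lsp{Z}$: in characteristic $0$ it is essentially Hironaka's functorial resolution restricted to SNC pairs, while in our setting one must argue more combinatorially, leveraging the fact that the non-SNC locus of $W$ is a proper closed subset disjoint from the generic points of all strata and so can in principle be resolved by a sequence of blowups whose centers are strata of partial desingularizations — an argument local enough to remain available in arbitrary characteristic.
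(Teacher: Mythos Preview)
Your reduction strategy is sound in principle, but the construction of the common simple normal crossing pair \(\lsp{Z}\) dominating both \(\lsp{X}\) and \(\lsp{Y}\) has a genuine gap. The graph closure \(W \subseteq X \times_S Y\) is typically \emph{singular}, so ``refining by blowups at smooth centers'' does not apply: before one can even speak of making the boundary snc, one must resolve \(W\) itself. Your suggestion that the non-snc locus ``can in principle be resolved by a sequence of blowups whose centers are strata of partial desingularizations'' is exactly resolution of singularities, which is an open problem in positive characteristic in dimension \(>3\). This is not a technicality that can be handled ``locally'' or ``combinatorially''; it is the central obstruction the paper is written to circumvent. (Note also, per \cref{ex:adm-not-thrifty} and the remark after \cref{cor:common-adm-blp}, that even a \(U\)-admissible blowup for \(U\) containing all strata need not be thrifty, so care would be needed even if one had resolution.)

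The paper's proof takes a fundamentally different route that avoids ever producing a regular \(Z\). Rather than applying \cref{thm:hdi-log-struct} to an snc roof, it reproduces the semi-simplicial argument of \cref{cor:complex-on-Z} directly: one builds a semi-simplicial scheme \(Z_\bullet\) with merely \emph{Cohen--Macaulay} terms (using \cref{thm:macify}, which is available in arbitrary characteristic) together with termwise projective birational maps \(X_\bullet \leftarrow Z_\bullet \to Y_\bullet\), and then invokes \cref{thm:sk-ratlsings} level by level. This yields only the quasi-isomorphism \(Rf_*\lstrshf{X}\simeq Rg_*\lstrshf{Y}\); condition \cref{item:ratlsings3} is then obtained not via the second quasi-isomorphism of \cref{thm:hdi-log-struct} but by Grothendieck duality (\cref{lem:ratl-res-alternate}), which converts it into Cohen--Macaulayness of \(\lstrshf{S}\), a condition independent of the chosen resolution.
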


To prove \cref{thm:hdi-log-struct} and \cref{thm:ratl-res} we take a small step
outside of the category of schemes, to that of \emph{semi-simplical schemes}
(for their definitions and basic properties we refer to
\cref{sec:simpres-specseq}). We hope to show that this is a natural approach ---
for example, every pair \(\lsp{X}\) has a naturally associated semi-simplicial
scheme categorifying its set of strata. We emphasize that all of the
semi-simplicial schemes \(X_\bullet\) we consider have strong
finite-dimensionality properties (\(X_i = \emptyset\) for \(i >> 0\)). Some of
our ideas could be of interest even in characteristic 0; for example, we
obtain a criterion for a pair \(\lsp{S}\) to have rational singularities in
terms of a resolution of the associated semi-simplicial scheme \(S_\bullet\):

\begin{lemma}[{\cref{lem:ssimp-ratl-pairs}}]
  \label{lem:ssimp-ratl-pairs-intro}
  Let \(\lsp{S}\) be a pair, with \(\Delta_{S}\) reduced and effective. Let
  \(\epsilon^S: S_\bullet\to S\) be the associated augmented semi-simplicial scheme and
  suppose
  \begin{equation}
    \begin{tikzcd}
      X_\bullet \arrow[r, "f_\bullet"] \arrow[d,"\epsilon^X"] & S_\bullet \arrow[d, "\epsilon^S"] \\
      X \arrow[r, "f"] & S
    \end{tikzcd}
  \end{equation}
  is a resolution. Then, \(\lsp{S}\) is a rational pair if and only if the sheaf
  \(\lstrshf{S}\) is Cohen-Macaulay and the natural map \(\lstrshf{S} \to Rf_*
  \mathscr{K}\) is a quasi-isomorphism, where \(\mathscr{K}\) is naturally
  defined complex on \(X\).\footnote{What exactly ``resolution'' means and how
    \(\mathscr{K}\) is defined will be
    made precise in \cref{lem:ssimp-ratl-pairs}.}
\end{lemma}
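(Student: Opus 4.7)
The plan is to unwind the ``naturally defined complex on \(X\)'' from \cref{lem:ssimp-ratl-pairs} — it should be (the totalization of) \(R\epsilon^X_* \sO_{X_\bullet}\) — and relate it to \(\sO_X(-\Delta_X)\) via the standard resolution of the ideal sheaf of a simple normal crossing divisor. Recall that when \((X, \Delta_X = \sum_i D_i)\) is SNC, one has a canonical exact sequence
\begin{equation*}
  0 \to \sO_X(-\Delta_X) \to \sO_X \to \bigoplus_i \sO_{D_i} \to \bigoplus_{i<j} \sO_{D_{ij}} \to \cdots
\end{equation*}
which is precisely the augmented complex associated to the semi-simplicial scheme of strata \(\epsilon^X\colon X_\bullet \to X\). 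So after identifying \(\mathscr{K} = \Tot(R\epsilon^X_* \sO_{X_\bullet})\) in appropriate degrees, the map \(\lstrshf{S} \to Rf_* \mathscr{K}\) can be analyzed stratum-by-stratum via the column filtration on the total complex.

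Granting this identification, I would run both directions through the spectral sequence
\begin{equation*}
  E_1^{p,q} = R^q (f \circ \epsilon^X)_* \sO_{X_p} \Longrightarrow R^{p+q} f_* \mathscr{K}
\end{equation*}
together with its analogue on \(S_\bullet\). For the forward implication, assume \(\lsp{S}\) is a rational pair, so in particular it admits \emph{some} thrifty rational resolution; by \cref{thm:hdi-log-struct} (invariance of \(Rf_* \lstrshf{X}\) under thrifty birational maps) this propagates to the resolution at hand, yielding \(\lstrshf{S} \simeq Rf_* \lstrshf{X}\). The semi-simplicial resolution \(X_\bullet \to S_\bullet\) reduces this ideal-sheaf statement to a compatible family of statements, one per simplicial degree, about the birational maps \(X_p \to S_p\) of the individual strata; since thriftiness ensures these are again proper birational maps between smooth schemes, the \(R^q\) terms match. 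To get the Cohen-Macaulay condition on \(\lstrshf{S}\), I would dualize using Grothendieck duality, converting the rationality vanishing \(R^i f_* \lcanshf{X} = 0\) for \(i>0\) into the statement that \(R\mathcal{H}om(\lstrshf{S}, \omega_S^\bullet)\) is concentrated in a single degree, which is the Cohen-Macaulay condition.

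For the reverse implication, the assumed quasi-isomorphism \(\lstrshf{S} \simeq Rf_* \mathscr{K}\) combined with the spectral sequence gives us exactly that \(\lstrshf{S} \simeq Rf_* \lstrshf{X}\) after using the SNC resolution of \(\lstrshf{X}\). So condition \cref{item:ratlsings2-intro} of \cref{def:ratl-res-intro} is established. The remaining task is to extract \(R^i f_* \lcanshf{X} = 0\) for \(i>0\) from the Cohen-Macaulay hypothesis on \(\lstrshf{S}\). Here I would again use Grothendieck duality: the CM hypothesis ensures \(R\sHom(\lstrshf{S}, \omega_S^\bullet)\) is a sheaf shifted into a single degree, and combined with the quasi-isomorphism \(\lstrshf{S} \simeq Rf_* \lstrshf{X}\) and the duality between \(\lstrshf{X}\) and \(\lcanshf{X}\) (up to a shift by \(\dim X\)) on the regular SNC pair \(\lsp{X}\), the higher direct images of \(\lcanshf{X}\) must vanish.

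The main obstacle is the careful bookkeeping between the semi-simplicial structure and Grothendieck duality: one must verify that \(\Tot(R\epsilon^X_* \sO_{X_\bullet})\) is compatible with \(f^!\) in a way that makes the duality argument symmetric between the two conditions of \cref{def:ratl-res-intro}. A secondary, but also delicate, point is the need to ensure that what the lemma calls a ``resolution'' at the semi-simplicial level restricts to honest thrifty resolutions on each pair of strata \(X_p \to S_p\), so that \cref{thm:hdi-log-struct} applies uniformly; this is presumably built into the precise definition deferred to the body of the paper, but it is the hypothesis that makes the whole spectral sequence / duality machinery run.
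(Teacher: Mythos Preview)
Your proposal rests on a misreading of the setup that drains the lemma of its content. You assume the augmented semi-simplicial scheme \(X_\bullet \to X\) is the stratification of a simple normal crossing pair \(\lsp{X}\), so that by \cref{lem:cech-cx-reg-seq-Cdiv} the complex \(\mathscr{K}\) is already quasi-isomorphic to \(\lstrshf{X}\). Under that reading the lemma collapses to the reformulation of rationality in \cref{lem:ratl-res-alternate} (which is exactly the Grothendieck-duality argument you outline for the Cohen--Macaulay condition), and there is nothing left to prove.

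The precise statement (\cref{lem:ssimp-ratl-pairs}) is stronger: the semi-simplicial resolution is only required to have each term rational and each \(X_i \to S_i\) an admissible blowup with respect to a stratum-containing open; in particular the differentials of \(X_\bullet\) need not be closed immersions and there is no divisor \(\Delta_X\) on \(X\) making \(\mathscr{K}\) an ideal sheaf. The existence of a thrifty resolution \(f'\colon X' \to S\) is a \emph{separate} hypothesis. The paper's proof bridges the two by invoking the construction of \cref{lem:ssimp-common-blp}: it builds an auxiliary Cohen--Macaulay semi-simplicial scheme \(Z_\bullet\) dominating both \(X'_\bullet\) and the given \(X_\bullet\), and then the spectral-sequence comparison of \cref{cor:complex-on-Z} together with \cref{thm:sk-ratlsings} gives \(Rf'_* \sO_{X'}(-\Delta_{X'}) \simeq Rf_* \mathscr{K}\). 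Only after this identification does \cref{lem:ratl-res-alternate} finish the argument. Your proposal skips this comparison entirely, and your closing speculation that the deferred definition ``restricts to honest thrifty resolutions on each pair of strata \(X_p \to S_p\)'' is precisely what the lemma is designed \emph{not} to assume (cf.\ the discussion and slogan following \cref{lem:ssimp-ratl-pairs-intro}).
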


A benefit of this lemma is that resolutions of the semi-simplicial scheme
\(S_\bullet\) are arguably \emph{easier} to construct than thrifty resolutions
of pairs: in \cite[Thm. 10.45]{MR3057950}, the existence of thrifty resolutions
was proved using the refined log-resolution theorems of
\cite{MR1322695,MR1440306}. In contrast, resolving semi-simplicial schemes
requires only standard resolution of singularities together with elementary
algorithms for inductively constructing semi-simplicial schemes (quite similar
to the constructions of hyper-resolutions in the theory of Du Bois singularities
\cite{MR498551,MR498552,MR613848}). Of course, the question of which route is
easier is subjective, but in light of \cref{lem:ssimp-ratl-pairs-intro} we think
an overarching "slogan" of this work is:

\begin{slogan}
  Rationality of a pair \((S, \Delta)\) can be tested using any
  resolution \(X_\bullet \to S_\bullet\) of a semi-simplicial scheme
  \(S_\bullet\) associated to \(S\) (coming from closures of strata of the snc locus
  \(\snc(S, \Delta)\)), and thrifty resolutions simply provide one way to
  obtain such resolutions \(X_\bullet \to S_\bullet\).
\end{slogan}

\subsection{Overview}

We next provide some motivation for the appearance of semi-simplicial schemes in
the proof of \cref{thm:hdi-log-struct}. To begin we may translate the condition that a birational morphism
\(f: \lsp{X} \to \lsp{S}\) of simple normal crossing pairs\footnote{We could
  relax the condition that both pairs are snc, but it will make this motivational
  discussion simpler.} with \(\Delta_X = f^{-1}_* \Delta_S\)  is thrifty into the
statement that the \emph{dual complexes} \(\cD(\Delta_X)\) and \(\cD(\Delta_S)\)
are isomorphic. The dual complex \(\cD(\Delta_X)\) is usually described as the
\(\Delta\)-complex (in the sense of \cite[\S 2.1]{MR1867354}) with 0-cells the
irreducible components \(D^X_i\) of \(\Delta_X = \sum_i D^X_i\), 1-cells the
components of intersections \( D^X_i \cap D^X_j \) for \(i<j\) with gluing maps
corresponding to the inclusions \(D^X_i \cap D^X_j \subseteq D^X_i \) and
\(D^X_i \cap D^X_j \subseteq D^X_j\), and so on --- in terms of
\cref{def:abstract-strata-intro}, the cells of \(\cD(\Delta_X)\) correspond to
strata of \(\lsp{X}\), with gluing maps corresponding to inclusions of strata.
The topological properties of \(\cD(\Delta_X)\) have been extensively studied,
for example in this non-exhaustive list of references:
\cite[]{MR3791210,MR3096516,MR2239783,MR0441970}. Upon inspection we see that a
\(\Delta\)-complex is precisely a semi-simplicial set, and that
\(\cD(\Delta_X)\) is the semi-simplical set obtained by taking \(\pi_0\)
(connected components) of a semi-simplicial \emph{scheme} \(X_\bullet\), with
\[X_i = \coprod_{|J| = i+1} (\cap_{j \in J} D^X_j) \text{ for } i \geq 0\]

The thriftyness hypotheses of \cref{thm:hdi-log-struct}  ensure that \(\lsp{X}\)
and \(\lsp{Y}\) have the same dual complex, which provides enough rigidity to
attempt to prove \cref{thm:hdi-log-struct} by induction on  \(\dim X\)  and the
number of components of \(\Delta_X \), using \cref{thm:cr-hdi-struct} as a base case. For example, we have exact sequences
\[0\to \sO_X(-\Delta_X) \to \sO_X(-\Delta_X+D^X_1) \to
  \sO_{D^X_1}(-\Delta_X+D^X_1|_{D^X_1}) \to 0 \] and similarly on \(Y\).
We can even assume by induction the existence of already-defined
quasi-isomorphisms in a diagram
\begin{equation}
  \label{eq:trying-to-fill-in-a-triangle}
  \begin{tikzcd}
    Rf_* \sO_X(-\Delta_X) \arrow[r] \arrow[d, dashed, "\alpha"] &Rf_* \sO_X(-\Delta_X + D_1^X) \arrow[r, "\rho^X"] \arrow[d, "\beta", "\simeq"'] \arrow[dr, phantom, "(\ast)"] &Rf_*  \sO_{D_1^X}(-\Delta_X + D_1^X |_{D_1^X}) \arrow[r] \arrow[d, "\gamma", "\simeq"']&\cdots \\
    Rg_* \sO_Y(-\Delta_Y) \arrow[r] &Rg_* \sO_Y(-\Delta_Y + D_1^Y) \arrow[r, "\rho^Y"] &Rg_*  \sO_{D_1^Y}(-\Delta_Y + D_1^Y |_{D_1^Y}) \arrow[r] &\cdots
  \end{tikzcd}
\end{equation}
\emph{If} the square \((\ast)\) commutes, then using only the fact that
\(\Dbcoh{S}\) is a triangulated category we get a quasi-isomorphism \(\alpha\)
on the dashed arrow. However, in this approach \(\beta, \gamma\) are themselves
defined by induction, and so to know \((\ast)\) commutes we must take one
inductive step further, considering maps of distinguished triangles
\begin{equation}
  \label{eq:dt1}
  {\tiny
    \begin{tikzcd}[column sep=small]
      Rf_* \sO_X(-\Delta_X + D_1^X) \arrow[r] \arrow[d] &Rf_* \sO_X(-\Delta_X + D_1^X +D_2^X) \arrow[r] \arrow[d]  &Rf_*  \sO_{D_2^X}(-\Delta_X + D_1^X |_{D_2^X}) \arrow[r] \arrow[d]&\cdots \\
      Rg_* \sO_Y(-\Delta_Y + D_1^Y) \arrow[r] &Rg_* \sO_Y(-\Delta_Y + D_1^Y +D_2^Y) \arrow[r] &Rg_*  \sO_{D_2^Y}(-\Delta_Y + D_1^Y |_{D_2^Y}) \arrow[r] &\cdots
    \end{tikzcd} \text{  and  }}
\end{equation}
\begin{equation}
  \label{eq:dt2}
  {\tiny
    \begin{tikzcd}[column sep=small]
      Rf_* \sO_{D_1^X}(-\Delta_X + D_1^X |_{D_1^X}) \arrow[r] \arrow[d] &Rf_* \sO_{D_1^X}(-\Delta_X + D_1^X +D_2^X |_{D_1^X}) \arrow[r] \arrow[d]  &Rf_*  \sO_{D_1^X \cap D_2^X}(-\Delta_X + D_1^X +D_2^X |_{D_1^X \cap D_2^X}) \arrow[r] \arrow[d]&\cdots \\
      Rg_* \sO_{D_1^Y}(-\Delta_Y + D_1^Y |_{D_1^Y}) \arrow[r]&Rg_* \sO_{D_1^Y}(-\Delta_Y + D_1^Y +D_2^Y |_{D_1^Y}) \arrow[r] \ &Rg_*  \sO_{D_1^Y \cap D_2^Y}(-\Delta_Y + D_1^Y +D_2^Y |_{D_1^Y \cap D_2^Y}) \arrow[r] &\cdots
    \end{tikzcd}}
\end{equation}
together with a map from \eqref{eq:dt1} to \eqref{eq:dt2} including the square
\((\ast)\), and so on. It is certainly possible that the correct induction
hypothesis (building in not only quasi-isomorphisms like \(\beta, \gamma\) in
\eqref{eq:trying-to-fill-in-a-triangle} but also commutativity hypotheses) and
some careful analysis of diagrams in \(\Dbcoh{S}\) could make this strategy
work, but the author had no such luck.  A separate technical issue the above approach encounters is that at some point
in the base case, we must analyze how the isomorphisms of
\cref{thm:cr-hdi-struct} behave with respect to restrictions, i.e. diagrams of
schemes like
\[
  \begin{tikzcd}
    D_1^X \arrow[d] & D_1^Z \arrow[l] \arrow[r] \arrow[d] & D_1^Y \arrow[d] \\
    X & Z \arrow[l ]\arrow[r] & Y
  \end{tikzcd}
\]
Delving into the methods of
\cite{MR2923726,MR3427575,kovacsRationalSingularities2022}, this analysis runs
into subtle aspects of Grothendieck duality, \emph{especially} since for this
approach to work we do require morphisms in \(\Dbcoh{S}\), not simply of
cohomology sheaves as in \cref{thm:cr-hdi-struct}.

Despite the aforementioned technical issues, what is clear is that this
attempted induction takes place on the semi-simplicial schemes \(X_\bullet\) and
\(Y_\bullet\) underlying the dual complexes \(\cD(\Delta_X)\) and
\(\cD(\Delta_Y)\). Under necessary thriftiness hypotheses, in the situation of
\cref{thm:hdi-log-struct} we find that there is also an auxiliary
semi-simplicial scheme \(Z_\bullet \) together with morphisms \(X_\bullet
\xleftarrow[]{r_\bullet} Z_\bullet \xrightarrow[]{s_\bullet} Y_\bullet\) which
are birational in each simplicial degree. Using the refined forms of Chow's lemma
and resolution of indeterminacies from Conrad's article on Deligne's notes on
Nagata compactifications \cite{MR2356346}, together with the \cref{conj:macify},  we can prove the existence of
such a \(Z_\bullet\) where each scheme \(Z_i\) is Cohen-Macaulay and normal and the
morphisms \(X_i \xleftarrow[]{r_i} Z_i \xrightarrow[]{s_i} Y_i\) are \emph{projective}
--- this occupies \Cref{sec:constructions,sec:snc-thrifty}.  We then make
essential use of recent work of Kov\'acs \cite[Thm. 1.4]{kovacsRationalSingularities2022} to conclude
that there are natural maps \(\sO_{X_i} \to R r_{i*} \sO_{Z_i}\) and \(\sO_{Y_i}
\to Rs_{i*} \sO_{Z_i}\) are quasi-isomorphisms for all \(i\). A more detailed overview of this
construction is included at the beginning of \Cref{sec:constructions}.

The remainder of our proof is pure homological algebra: in
\Cref{sec:simpres-specseq} we show that when \(\lsp{X}\) is a simple normal
crossing pair (more generally, when the components \(D_i^X \) of \(\Delta_X\)
form a \emph{regular sequence}, see \cref{def:reg-seq-div}) the ideal sheaf
\(\sO_X(-\Delta_X)\) admits a \v{C}ech-type resolution of the form
\[ \sO_X(-\Delta_X) \to \sO_X \to \sO_{X_0} \to \sO_{X_1} \to \cdots, \]
in other words we can recover \(\sO_X(-\Delta_X)\) from an augmentation
morphism \(X_\bullet \to X\). Moreover, we can recover the \emph{cohomology} of
\(\sO_X(-\Delta_X)\) from a descent-type spectral sequence
\cref{cor:desc-ss-hdi} --- the last major technical ingredient is a
comparison of the resulting spectral sequences associated to \(X\), \(Y\) and
\(Z\).

\Cref{sec:app-ratl-pairs} deals with applications to rational pairs, in
particular \cref{thm:ratl-res} and \cref{lem:ssimp-ratl-pairs-intro}. \Cref{sec:ex-non-thrifty} includes some
new examples illustrating the subtleties of thrifty and rational resolutions of
pairs, including an affirmative answer to a question of Erickson and Prelli on
whether there exists a non-thrifty rational resolution of a pair \((S, \Delta)\)
--- our \((S, \Delta )\) is even a rational pair, and the resolution is related
to the famous Atiyah flop.

\subsection{Acknowledgements}
I would like to thank Jarod Alper, Chi-yu Cheng, Kristin DeVleming, Gabriel
Dorfsman-Hopkins, Max Lieblich, Takumi Murayama, Karl Schwede and Tuomas Tajakka
for helpful conversations, multiple anonymous reviewers for useful feedback and my advisor S\'andor Kov\'acs for  proposing the
problem of extending results \cref{thm:cr-hdi-struct} to pairs and for many
valuable suggestions.

\section{Regular sequences of divisors and descent spectral sequences}
\label{sec:simpres-specseq}

\subsection{Semi-simplicial schemes and their derived categories}
To any simple normal crossing pair we can naturally associate a
\emph{semi-simplicial scheme}. A primary reference for the theory of
semi-simplicial schemes is \cite[Vbis]{MR0354653}; since many elementary facts
about \emph{simplicial} schemes carry over to semi-semi-simplicial schemes,
\cite{conradCOHOMOLOGICALDESCENT}, \cite[\S 2.4]{MR3495343} and
\cite[\href{https://stacks.math.columbia.edu/tag/0162}{Tag
    0162}]{stacks-project} are also relevant. What follows is a condensed summary of
the machinery we need.

Let \(\Lambda \) denote the category with objects the sets \([i] := \{0, 1, 2,
\dots, i\}\) for \(i \in \NN\) and with morphisms the \emph{strictly increasing}
functions \([j] \to [i]\); in particular \(\Hom_\Lambda([j], [i]) = \emptyset\)
if \(j > i\).\footnote{In \cite[Vbis]{MR0354653} \(\Lambda\) is denoted by
  \(\Delta^+\) so this notation is non-standarad, but seemed necessary due to the
  number of divisors \(\Delta\) and pairs \((X, \Delta)\) considered below. My apologies.}  A \emph{semi-simplicial object} in a category \(\cC\) is a
functor \(\Lambda^{\textup{op}} \to \cC\); semi-simplicial \(\cC\)-objects
naturally form a category, the functor category \(\cC^{\Lambda^{\textup{op}}}\).
Any morphism \(\varphi: [j] \to [i]\) can be written non-uniquely as a
composition of the basic morphisms
\[ \delta^i_k :[i-1] \mapsto [i] \text{ defined by } \delta^i_k(x) =
  \begin{cases}
    x   & \text{if } x < k  \\
    x+1 & \text{ otherwise}
  \end{cases} \]
(so \( \delta^i_k\) skips \(k\))
\cite[\href{https://stacks.math.columbia.edu/tag/0164}{Tag
    0164}]{stacks-project}, and hence a semi-simplicial object \(X :
\Lambda^{\textup{op}} \to \cC\) is equivalent to a sequence of objects \(X_i :=
X([i])\) together with morphisms
\begin{equation}
  \label{eq:ssimp-rel}
  d^i_k := X(\delta^i_k ): X_i \to X_{i-1} \text{ subject to the relations  } d^{i-1}_k \circ d^i_l = d^{i-1}_{l-1} \circ d^i_k,
\end{equation}
and all semi-simplicial objects below will be obtained from such an explicit
description. In what follows semi-simplicial objects will be denoted with a
\(\bullet\), e.g. ``the semi-simplicial scheme \(X_\bullet\)'' (to distinguish
them from plain schemes).

When \(\cC\) is a category of schemes, a \emph{sheaf} on a semi-simplicial
scheme \(X_\bullet\) is the data of a sheaf \(\sF_i\) on each scheme \(X_i\)
together with morphisms of sheaves \(\delta^i_k: \sF_{i-1} \to d^i_{k *} \sF_i\)
on \(X_{i-1}\) satisfying compatibilities coming from \eqref{eq:ssimp-rel}.
These sheaves form a \emph{topos} \(\tilde{X}_\bullet\) such that morphisms of
semi-simplicial schemes \(f_\bullet: X_\bullet \to Y_\bullet\) induce functorial
maps of topoi \(\tilde{X}_\bullet \to \tilde{Y}_\bullet\) (see \cite[Vbis, Prop.
  1.2.15]{MR0354653}) --- the benefit of
the topos-theoretic point of view is that it immediately implies the category of
\emph{abelian} sheaves \(\Ab(X_\bullet)\) on \(X_\bullet\) is an abelian
category with enough injectives (\cite[\href{https://stacks.math.columbia.edu/tag/01DL}{Tag 01DL}]{stacks-project}), enables us to define pushforward functors
\(Rf_* : D^+(\Ab(X_\bullet)) \to D^+(\Ab(Y_\bullet))\) for morphisms of
semi-simplicial schemes \(f_\bullet: X_\bullet \to Y_\bullet\), and so on.

An \emph{augmented} semi-simplicial scheme is a morphism of semi-simplicial
schemes \(\epsilon_\bullet: X_\bullet \to S_\bullet\) where \(S_\bullet\) is a
\emph{constant} semi-simplicial scheme (that is, \(S_i = S\) for all \(i\) for
some fixed scheme \(S\), and all \(d^i_k = \mathrm{id}\)). This is equivalent to
the data of a semi-simplicial object of \(\Sch_S\). For such a constant
semi-simplicial scheme \(S_\bullet\),
\(\Ab(S_\bullet)  \) is equivalent to the category \(\Ab(S)^{\Lambda}\) of
co-semi-simplicial sheaves of abelian groups on \(S\), that is, sequences of
sheaves of abelian groups \(\sG_i\) on \(S\) together with morphisms
\(\delta_k^i: \sG_{i-1} \to \sG_i\) satisfying compatibilities forced by
\eqref{eq:ssimp-rel}. As in the construction of the \v{C}ech complex setting
\(d^i = \sum_k (-1)^k : \delta_k^i: \sG_{i-1} \to \sG_i\) gives a complex of
abelian sheaves on \(S\) and hence in particular an abelian sheaf
\(a(\sG_\bullet) := \ker d^0\). Writing \(\epsilon_* := a \circ
\epsilon_{\bullet *}\), the composite derived functor
\[
  \begin{tikzcd}
    D^+(\Ab(X_\bullet)) \arrow[rr, bend left, "R\epsilon_*"] \arrow[r, "R\epsilon_{\bullet *}"] & D^+(\Ab(S_\bullet)) \arrow[r, "Ra"] & D^+(\Ab(S))
  \end{tikzcd}
\]
admits the following concrete description: given a sheaf \(\sF_\bullet\) on
\(X_\bullet\) one takes an injective resolution
\[ \sF_\bullet \to \sI_{\bullet}^0 \to \sI_{\bullet}^1 \to \sI_{\bullet}^2 \to \dots
  \text{ in } \Ab(X_\bullet)
\] Here the \(\sI_{\bullet}^j\) are in particular sheaves on \(X_\bullet\) with
each \(\sI_i^j\) an injective abelian sheaf on \(X_i\) --- for further
discussion of injective objects in \(\Ab(X_\bullet)\) see \cite[Vbis, Prop.
  1.3.10]{MR0354653} and \cite[Lem. 6.4, comments on p. 42]{conradCOHOMOLOGICALDESCENT}. Then
\[ \epsilon_{\bullet*} \sI_{\bullet}^0 \to \epsilon_{\bullet*}\sI_{\bullet}^1 \to \epsilon_{\bullet*}\sI_{\bullet}^2 \to \dots
  \text{ in } \Ab(S_\bullet) \] is a complex of co-semi-simplicial abelian sheaves
which via the \v{C}ech construction becomes a complex of complexes. Applying the
sign trick gives a double complex whose \(\Tot\) computes \(R\epsilon_*
\sF_\bullet\). One of the spectral sequences of this
double complex is displayed below. In our calculations it is \emph{crucial} that
this spectral sequence is (at least in a minimal sense) functorial.
\begin{lemma}[{Descent spectral sequence, \cite[Vbis \S 2.3]{MR0354653},
        \cite[Thms. 6.11-6.12]{conradCOHOMOLOGICALDESCENT}}]
  \label{lem:desc-ss}
  If \(\sF_{\bullet}\) is an abelian sheaf on an augmented semi-simplicial scheme
  \(\epsilon: X_\bullet \to S\) then there is a spectral sequence
  \[ E_1^{pq} = R^q \epsilon_{p *} \sF_p \to R^{p+q}\epsilon_* \sF_\bullet \]
  Moreover if \(\sG_{\bullet}\) is an abelian sheaf on another augmented
  semi-simplicial scheme \(\epsilon' : Y_\bullet \to T\) and
  \[
    \begin{tikzcd}
      Y_\bullet \arrow[r, "g_\bullet"] \arrow[d, "\epsilon'"] & X_\bullet \arrow[d, "\epsilon"] \\
      T \arrow[r, "g"] & S
    \end{tikzcd}
  \]
  is a map of augmented semi-simplicial schemes together with a map of abelian
  sheaves \(\varphi: \sF_{\bullet} \to g_{\bullet *}\sG_{\bullet}\) on
  \(X_\bullet\), then \(\varphi\) induces a morphism of spectral sequences
  \[E_1^{pq}(\sF_\bullet) = R^q \epsilon_{p *} \sF_p  \to R^q (\epsilon \circ
    g_p)_* \sG^p = E_1^{pq}(\sG_{\bullet}) \]
  converging to the morphism \(R \epsilon_*(\varphi) :R \epsilon_* \sF_{\bullet} \to R\epsilon_*
  Rg_{\bullet *}\sG_\bullet = Rg_* R \epsilon'_*
  \sG^\bullet  \).
\end{lemma}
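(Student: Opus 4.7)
The plan is to extract the spectral sequence from the double complex already assembled in the paragraphs just above the lemma, and then to establish its functoriality by lifting $\varphi$ to a map of injective resolutions.

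First, I would fix an injective resolution $\sF_\bullet \to \sI_\bullet^{\bullet}$ in $\Ab(X_\bullet)$. By the properties of injectives in $\Ab(X_\bullet)$ recalled just before the lemma, each component $\sI_i^{\bullet}$ is an injective resolution of $\sF_i$ in $\Ab(X_i)$. Applying $\epsilon_{\bullet *}$ together with the \v{C}ech sign trick produces a double complex $K^{pq} = \epsilon_{p*}\sI_p^q$ whose total complex represents $R\epsilon_*\sF_\bullet$ in $D^+(\Ab(S))$. The first-quadrant spectral sequence obtained by filtering $K^{\bullet\bullet}$ by the semi-simplicial degree $p$ has
\[ E_1^{pq} = H^q\bigl(\epsilon_{p*}\sI_p^{\bullet}\bigr) = R^q \epsilon_{p*}\sF_p, \]
because $\epsilon_p$ is an honest scheme map and $\sI_p^{\bullet}$ is an injective resolution of $\sF_p$, and it converges to $H^{p+q}$ of the total complex, i.e.\ $R^{p+q}\epsilon_*\sF_\bullet$.

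For functoriality, I would fix injective resolutions $\sF_\bullet \to \sI_\bullet^{\bullet}$ on $X_\bullet$ and $\sG_\bullet \to \sJ_\bullet^{\bullet}$ on $Y_\bullet$. Since $g_\bullet^{-1}$ is exact at the topos level, its right adjoint $g_{\bullet *}$ preserves injectives, so $g_{\bullet *}\sJ_\bullet^{\bullet}$ is a complex of injective objects in $\Ab(X_\bullet)$ resolving $g_{\bullet *}\sG_\bullet$. Standard homological algebra then lifts $\varphi : \sF_\bullet \to g_{\bullet *}\sG_\bullet$ to a map of resolutions $\Phi : \sI_\bullet^{\bullet} \to g_{\bullet *}\sJ_\bullet^{\bullet}$, unique up to chain homotopy. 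Pushing forward by $\epsilon_{\bullet *}$ and totalizing yields a morphism of double complexes that respects the filtration by $p$, and hence a morphism of spectral sequences. On $E_1^{pq}$ it realizes the map $R^q \epsilon_{p*}\sF_p \to R^q\epsilon_{p*} g_{p*}\sG_p = R^q(\epsilon_p \circ g_p)_*\sG_p$, and at $E_\infty$ it agrees with $R\epsilon_*(\varphi)$ by construction.

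The main obstacle, and the step I would spend the most care on, is the identification $R^q\epsilon_{p*}g_{p*}\sG_p = R^q(\epsilon_p \circ g_p)_*\sG_p$. This uses (i) that each $g_{p*}\sJ_p^q$ is $\epsilon_{p*}$-acyclic, which follows from $g_{p*}$ preserving scheme-level injective abelian sheaves (being right adjoint to the exact $g_p^{-1}$), and (ii) the strict commutativity of the squares
\[
\begin{tikzcd}
Y_p \arrow[r, "g_p"] \arrow[d, "\epsilon'_p"'] & X_p \arrow[d, "\epsilon_p"] \\
Y \arrow[r, "g"] & S
\end{tikzcd}
\]
obtained by unpacking the definition of a map of augmented semi-simplicial schemes, so that the $p$-th column of the target double complex is genuinely $(\epsilon_p \circ g_p)_*\sJ_p^{\bullet}$. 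Once these compatibilities and the semi-simplicial sign conventions of \eqref{eq:ssimp-rel} are handled, the remaining bookkeeping is routine and matches the treatments in \cite[Vbis, \S 2.3]{MR0354653} and \cite[Thms.\ 6.11--6.12]{conradCOHOMOLOGICALDESCENT}.
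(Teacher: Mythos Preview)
Your proposal is correct and follows essentially the same route as the paper: lift $\varphi$ to a map of complexes $\sI_\bullet^{\bullet} \to g_{\bullet *}\sJ_\bullet^{\bullet}$ using that $g_{\bullet *}$ preserves injectives (its left adjoint $g_\bullet^{-1}$ being exact), then apply $\epsilon_{\bullet *}$ and the \v{C}ech/sign-trick construction to obtain a map of double complexes inducing the desired map of spectral sequences. One small wording slip: $g_{\bullet *}\sJ_\bullet^{\bullet}$ need not \emph{resolve} $g_{\bullet *}\sG_\bullet$ (since $g_{\bullet *}$ is not exact), but this is irrelevant for the lifting step, which only requires that the target be a bounded-below complex of injectives receiving a map from $\sF_\bullet$---exactly as in the paper's citation of \cite[\href{https://stacks.math.columbia.edu/tag/013P}{Tag 013P}]{stacks-project}.
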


\begin{proof}[Proof of the ``Moreover ...'']
  We work with the abelian categories of sheaves of abelian groups on
  \(Y_\bullet\), \(X_\bullet\). Let \(\sJ^\bullet_\bullet\) be an injective
  resolution of \(\sG_\bullet\) in \(\Ab(Y_\bullet)\). Then \(f_{\bullet
      *}\sJ^\bullet_\bullet \) is a complex of injectives (this uses the fact that
  \(f_{\bullet *}\) has an \emph{exact} left adjoint \(f^{-1}\)), \(\sF_\bullet
  \to \sI_{\bullet}^\bullet\) is a quasi-isomorphism and we are given a map
  \[ \varphi: \sF_\bullet \to  f_{\bullet *} \sG_\bullet \to f_{\bullet
        *}\sJ^\bullet_\bullet\]
  By \cite[\href{https://stacks.math.columbia.edu/tag/013P}{Tag
      013P}]{stacks-project} (see also \cite[Thm. 2.2.6]{MR1269324}) there is a map
  of complexes of abelian sheaves on \(X_\bullet\) extending \(\varphi\):
  \[ \tilde{\varphi}: \sI_{\bullet}^\bullet \to  f_{\bullet
        *}\sJ^\bullet_\bullet\]
  Applying \(\epsilon_{\bullet *}\) then gives a morphism of complexes
  of co-semi-simplicial abelian sheaves on \(S\) consisting of morphisms
  \[
    \epsilon_{p *}\sI_p^q \to \epsilon_{p*} g_{p*} \sJ_p^q
  \]
  compatible with \emph{both} the simplicial sheaf maps (in the \(p\) direction)
  \emph{and} the injective resolution maps (in the \(q\)) direction, to which we
  may apply the \v{C}ech construction and sign trick to obtain a map of double
  complexes. This reduces us to the claim that a map of double complexes (or
  more generally a filtered map of filtered complexes) induces a map of spectral
  sequences, which we take as well known.
\end{proof}

\begin{remark}
  The above proof is at least suggested in the last sentence of \cite[Thm.
    6.11]{conradCOHOMOLOGICALDESCENT}. An alternative method would be to use
  Deligne's trick of viewing  \(\varphi\) as an abelian sheaf on the \(\Lambda
  \times I\) scheme associated to \(f_\bullet\) --- for related discussion see
  \cite[Vbis, \S 3.1]{MR0354653}.
\end{remark}

\begin{corollary}
  \label{cor:iso-p1}
  In the situation of \cref{lem:desc-ss} suppose in addition that the morphisms
  \(\varphi_p: \sF_p \to Rf_{p*} \sG_p \) are quasi-isomorphisms for all \(p\).
  Then, the induced morphism
  \[ R \epsilon_*(\varphi) :R \epsilon_* \sF_{\bullet} \to R\epsilon_*
    Rg_{\bullet *}\sG_\bullet = Rg_* R \epsilon'_*
    \sG^\bullet \]
  is a quasi-isomorphism.
\end{corollary}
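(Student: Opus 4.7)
The plan is to deduce the corollary directly from the morphism-of-spectral-sequences statement of \cref{lem:desc-ss}. Applied to $\varphi$, that lemma produces a morphism of first-quadrant spectral sequences
\[
E_1^{pq}(\sF_\bullet) = R^q \epsilon_{p *}\sF_p \longrightarrow R^q(\epsilon_p \circ g_p)_* \sG_p = E_1^{pq}(\sG_\bullet)
\]
abutting to $R\epsilon_*(\varphi): R\epsilon_* \sF_\bullet \to Rg_* R\epsilon'_* \sG_\bullet$. The first step is to unpack the hypothesis: each $\varphi_p: \sF_p \to Rg_{p *}\sG_p$ being a quasi-isomorphism implies, upon applying $R^q\epsilon_{p *}$ and using the Leray / composition-of-derived-functors identification $R\epsilon_{p *} \circ Rg_{p *} = R(\epsilon_p \circ g_p)_*$, that the induced map on $E_1$-terms is an isomorphism for every $(p,q)$.

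The second step is the standard inductive argument on pages: a morphism of spectral sequences that is an isomorphism on $E_1$ commutes with $d_1$, hence is an isomorphism on $E_2$, and by iteration on every $E_r$ and therefore on $E_\infty$. Provided both spectral sequences converge strongly, this gives isomorphisms on the graded pieces of the induced filtrations on the abutments, and then a standard five-lemma argument up the filtrations yields that the abutment map induces isomorphisms on every cohomology sheaf $R^n\epsilon_*\sF_\bullet \to R^n g_* R\epsilon'_* \sG_\bullet$, i.e.\ $R\epsilon_*(\varphi)$ is a quasi-isomorphism in $D^+$.

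The only point that requires care is strong convergence of the two descent spectral sequences, since the simplicial index $p$ is a priori unbounded. This is not an obstacle in the setting of the paper, because every semi-simplicial scheme $X_\bullet$ considered satisfies $X_p = \emptyset$ for $p$ sufficiently large, so the spectral sequence is strictly bounded in the $p$-direction and the passage from $E_\infty$ to the abutment is automatic. In full generality one would invoke the standard complete-convergence theorem for the spectral sequence of a filtered complex, which applies here because the descent spectral sequence arises, as recalled in the proof of \cref{lem:desc-ss}, from the totalization of the double complex $\epsilon_{p *}\sI_p^q$. Either way, the heart of the argument is the passage from quasi-isomorphisms on simplicial components to an isomorphism on $E_1$; everything after is formal homological algebra.
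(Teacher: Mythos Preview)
Your argument is correct and is exactly the intended one: the paper states \cref{cor:iso-p1} without proof, treating it as an immediate consequence of the morphism-of-spectral-sequences part of \cref{lem:desc-ss}, and your write-up spells out precisely that standard comparison argument (isomorphism on $E_1$ $\Rightarrow$ isomorphism on $E_\infty$ $\Rightarrow$ quasi-isomorphism on the abutment). Your remark on convergence is also appropriate, since in all applications the paper's semi-simplicial schemes satisfy $X_p = \emptyset$ for $p \gg 0$.
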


\begin{corollary}
  \label{cor:bdd-coh}
  In the situation of \cref{lem:desc-ss}, suppose in addition that the scheme
  \(S \) is noetherian, each of the morphisms \(X_i \to S \) is proper, and each
  of the sheaves \(\mathscr{F}_i \) on \(X_i \) is coherent. Then,
  sheaf \(R^i \epsilon_* \sF_{\bullet} \) on \(S \) is coherent for all \(i\)
  and \(R^i \epsilon_* \sF_{\bullet} = 0\) for \(\lvert i \rvert \gg 0\). In
  other words, the complex \(R \epsilon_* \sF_{\bullet}\) belongs to the bounded
  derived category of coherent sheaves \(D_c^b(S)\).
\end{corollary}

\subsection{Regular sequences of divisors}
\begin{definition}
  \label{def:reg-seq-div}
  Let \(X\) be a locally noetherian scheme. A sequence of effective Cartier
  divisors \(D_1, D_2, \dots, D_N \subseteq X\) is called \emph{regular} if and
  only if for each point \(x \in X\), letting \(f_1, \cdots, f_N \in \sO_{X,x}\)
  be local generators for the ideal sheaves \(\mathscr{I}_{D_i}\) at \(x\) and letting \(I(x) = \{i \, | \,
  x \in D_i\}\), the elements \((f_j \in \mathfrak{m}_x \, | \, j \in I(x))\)
  form a regular sequence.
\end{definition}

This definition is designed to ensure that a permutation of a regular sequence
of divisors is again a regular sequence (see \cite[\S 15, Thm. 27]{MR575344}, \cite[\href{https://stacks.math.columbia.edu/tag/00LJ}{Tag 00LJ}]{stacks-project}).

Let \(X \) be a locally noetherian scheme together with a regular sequence of
effective Cartier divisors \(D_1, D_2, \dots, D_N \subseteq X\). We define an
augmented semi-simplicial scheme \(X_\bullet\) as follows: \(X_{-1} = X\), \(X_0
= \coprod_i D_i\) and for \(k>0\),
\[
  X_k =  \coprod_{I \subseteq \{1,\dots, N\} \, | \, |I| = k+1} D_I, \text{  where  } D_I = \bigcap_{j \in I} D_j
\]
The face maps are defined by the inclusions \(d_k^j : D_I \inj D_{I
    \setminus \{i_j\}}\) for \(I = \{i_0, \dots, i_k\}\) and \(0 \leq j \leq i\), as
in a \v{C}ech complex, and for each \(k\) we have an augmentation map
\(\epsilon_p : X_k \to X\) obtained from the inclusions \(D_I \subseteq X\).
In this situation the descent spectral sequence of \cref{lem:desc-ss}
degenerates: since the \(\epsilon_p: X_p \to X\) are closed immersions and hence
affine, \(R^q \epsilon_{p*} \sO_{X_p} = 0 \) for \(q>0\). It follows that \(R^i
\epsilon_* \sO_{X_\bullet}\) is the cohomology of the \v{C}ech type complex
\begin{equation}
  \label{eq:cech-type-cx}
  \begin{split}
    & \epsilon_{0 *} \sO_{X_0} \xrightarrow{d^1}  \epsilon_{1 *} \sO_{X_1} \xrightarrow{d^2} \cdots \xrightarrow{d^N} \epsilon_{N *} \sO_{X_N}  \\
    &= \bigoplus_i \sO_{D_i} \xrightarrow{d^1} \bigoplus_{i < j} \sO_{D_i \cap D_j} \xrightarrow{d^2} \cdots \xrightarrow{d^N}\sO_{\cap_i D_i}
  \end{split}
\end{equation}
\begin{lemma}
  \label{lem:cech-cx-reg-seq-Cdiv}
  The complex \eqref{eq:cech-type-cx} is exact in degrees \(i>0\), with \(\ker
  d^1 \simeq \sO_{\cup_i D_i}\). Equivalently, the extended complex
  \[
    0 \to \sO_X(-\sum_i D_i) \to \sO_X \xrightarrow{\gamma} \bigoplus_i \sO_{D_i} \xrightarrow{d^1} \bigoplus_{i < j} \sO_{D_i \cap D_j} \xrightarrow{d^2} \cdots \xrightarrow{d^N}\sO_{\cap_i D_i} \to 0
  \] where \(\gamma:  \sO_X \to \bigoplus_i \sO_{D_i}\) is restriction in each factor is exact, and hence there is a canonical quasi-isomorphism \(\sO_X(-\sum_i D_i) \simeq \cone ( \sO_X \to R\epsilon_* \sO_{X_\bullet} )[-1]\).
\end{lemma}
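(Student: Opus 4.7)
The statement is local on $X$, so by passing to stalks at an arbitrary point $x \in X$ we reduce to the following algebraic assertion: if $f_1, \ldots, f_n$ is a regular sequence in a noetherian local ring $R$, then
\[ 0 \to (f_1 \cdots f_n) \to R \to \bigoplus_i R/(f_i) \to \bigoplus_{i<j} R/(f_i,f_j) \to \cdots \to R/(f_1, \ldots, f_n) \to 0 \]
is exact. (Divisors $D_i$ not passing through $x$ contribute sheaves that vanish on a neighborhood of $x$, so the stalk involves only the local equations of those $D_i$ with $x \in D_i$, which by \cref{def:reg-seq-div} form a regular sequence.) Exactness at the first two positions reduces to the standard identity $\bigcap_i (f_i) = (f_1 \cdots f_n)$ for regular sequences, which I would prove by induction, using permutation-invariance of regular sequences to see that $\bar f_1 \cdots \bar f_{n-1}$ is a non-zero-divisor on $R/(f_n)$.

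For exactness at higher positions, I would induct on $n$, working with the un-augmented \v{C}ech-type complex
\[ C^k = \bigoplus_{|I|=k+1} R/(f_I), \qquad 0 \leq k \leq n-1, \]
and showing $H^0(C^\bullet) = R/(f_1 \cdots f_n)$ compatibly with the augmentation $R \to C^0$, while $H^i(C^\bullet) = 0$ for $i \geq 1$. Splitting each $C^k$ according to whether the index set $I$ contains the ``last'' element $n$ yields a short exact sequence of complexes $0 \to A^\bullet \to C^\bullet \to B^\bullet \to 0$, where $B^\bullet$ is the un-augmented \v{C}ech complex for $f_1, \ldots, f_{n-1}$ on $R$, and via $I \mapsto I \setminus \{n\}$ the sub-complex $A^\bullet$ is identified with the \emph{augmented} \v{C}ech complex $R/(f_n) \to \bigoplus_{i<n} (R/(f_n))/(\bar f_i) \to \cdots$ for $\bar f_1, \ldots, \bar f_{n-1}$ on $R/(f_n)$ (here $A^0 = R/(f_n)$ comes from the singleton $I = \{n\}$).

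By the inductive hypothesis applied to both pieces, $H^i(B^\bullet) = H^i(A^\bullet) = 0$ for $i \geq 1$, $H^0(B^\bullet) = R/(f_1 \cdots f_{n-1})$, and $H^0(A^\bullet) = (\bar f_1 \cdots \bar f_{n-1}) \subseteq R/(f_n)$; the associated long exact sequence then collapses to $H^i(C^\bullet) = 0$ for $i \geq 1$ together with
\[ 0 \to (\bar f_1 \cdots \bar f_{n-1}) \to H^0(C^\bullet) \to R/(f_1 \cdots f_{n-1}) \to 0. \]
The main obstacle I anticipate is the final compatibility check: the canonical map $R \to C^0$ lands in $\ker(C^0 \to C^1) = H^0(C^\bullet)$ with kernel $(f_1 \cdots f_n)$, giving an inclusion $R/(f_1 \cdots f_n) \hookrightarrow H^0(C^\bullet)$; identifying $(\bar f_1 \cdots \bar f_{n-1}) \simeq (f_1 \cdots f_{n-1})/(f_1 \cdots f_n)$ via the identity from the first paragraph, a diagram chase against the elementary sequence $0 \to (f_1 \cdots f_{n-1})/(f_1 \cdots f_n) \to R/(f_1 \cdots f_n) \to R/(f_1 \cdots f_{n-1}) \to 0$ and an application of the five lemma promotes this inclusion to an isomorphism. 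Combined with the kernel computation from the first paragraph, this yields exactness of the full extended complex, hence the quasi-isomorphism $\sO_X(-\sum_i D_i) \simeq \cone(\sO_X \to R\epsilon_* \sO_{X_\bullet})[-1]$ asserted in the lemma.
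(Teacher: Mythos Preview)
Your proof is correct and uses essentially the same inductive strategy as the paper: split the \v{C}ech-type complex according to whether the index set contains a fixed element, and apply the long exact sequence together with the inductive hypothesis on both pieces. The only differences are presentational---the paper works at the sheaf level throughout (without first passing to stalks), keeps $\sO_X$ in degree $-1$ of all three complexes, and finishes by directly identifying the connecting map $\delta$ as restriction of sections (so that the low-degree sequence becomes the standard ideal-sheaf sequence for the Cartier divisor $D_1$) rather than via your five-lemma comparison.
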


\begin{proof}
  We proceed by induction on the number \(N\) of divisors. The base case \(N=0\)
  is vacuous (\(X_\bullet\) is empty). If that seems too weird, the case \(N=1\)
  simply says that the sequence \(0\to \sO_X(-D_1) \to \sO_X \to \sO_{D_1} \to
  0\) is exact, which is indeed the case as \(D_1\) is an effective Cartier divisor.

  Suppose now that \(N>1\). Then by the definition of a regular sequence, \(D_1
  \cap D_2, D_1 \cap D_3, \dots, D_1 \cap D_N \subseteq D_1 \) is a regular
  sequence of divisors, and by permutation invariance of regular sequences (for
  \emph{noetherian local rings} \cite[\S 15, Thm. 27]{MR575344}, \cite[\href{https://stacks.math.columbia.edu/tag/00LJ}{Tag 00LJ}]{stacks-project} --- this dictated \cref{def:reg-seq-div}) \(D_2,
  \dots, D_N \subseteq X\) is a regular sequence. We form a short exact sequence
  of complexes (with cohomological degrees as indicated)
  \begin{equation}
    \label{eq:reg-seq-induction}
    \begin{tikzcd}[column sep=small]
      C' \arrow[d, "\alpha"]: & 0 \arrow[r] \arrow[d] & \sO_{D_1} \arrow[r, "d'"]\arrow[d, "\alpha"] & \bigoplus_{1<j} \sO_{D_1 \cap D_j}  \arrow[r, "d'"] \arrow[d, "\alpha"] & \bigoplus_{1<j<k}\sO_{D_1 \cap D_j\cap D_k} \arrow[r, "d'"] \arrow[d, "\alpha"] & \cdots \\
      C \arrow[d, "\beta"]: & \sO_X \arrow[r, "\gamma"]\arrow[d, equals, "\beta"] & \bigoplus_{i} \sO_{D_i} \arrow[r, "d"] \arrow[d, "\beta"] & \bigoplus_{i<j} \sO_{D_i\cap D_j} \arrow[r, "d"] \arrow[d, "\beta"] & \bigoplus_{i<j<k} \sO_{D_i\cap D_j\cap D_k}  \arrow[r, "d"] \arrow[d, "\beta"] & \cdots \\
      C'': &\sO_X \arrow[r, "\gamma''"] &   \bigoplus_{1<i} \sO_{D_i} \arrow[r, "d''"] & \bigoplus_{1<i<j} \sO_{D_i\cap D_j} \arrow[r, "d''"] &  \bigoplus_{1<i<j<k} \sO_{D_i\cap D_j\cap D_k}  \arrow[r, "d''"] & \cdots \\
      & -1 & 0 & 1 & 2 &
    \end{tikzcd}
  \end{equation}
  (in fact by comparing ranges of indices we can see the columns are \emph{split} short exact sequences). By inductive hypotheses,
  \[ h^i(C') = \begin{cases}
      \sO_{D_1}(-\sum_{1<j}D_1 \cap D_j) & \text{ if } i = 0  \\
      0                                  & \text{  otherwise}
    \end{cases} \text{ and }
    h^i (C'') = \begin{cases}
      \sO_{X}(-\sum_{1<j}D_j) & \text{ if } i = -1 \\
      0                       & \text{  otherwise}
    \end{cases}
  \] showing that \(h^i(C) = 0 \) for \(i > 0\), and that in low degrees there
  is an exact sequence
  \begin{equation}
    \label{eq:disguised-restriction}
    0 \to h^{-1} (C) \to \sO_{X}(-\sum_{1<j}D_j) =  h^{-1} (C'')
    \xrightarrow{\delta} h^0(C') = \sO_{D_1}(-\sum_{1<j}D_1 \cap D_j) \to 0
  \end{equation}
  To complete the proof, we must verify that the connecting map \(\delta\) is
  indeed restriction of sections, so that \eqref{eq:disguised-restriction}
  coincides with the usual exact sequence
  \[ 0 \to \sO_X(-\sum_{j}D_j) \to \sO_{X}(-\sum_{1<j}D_j) \to
    \sO_{D_1}(-\sum_{1<j}D_1 \cap D_j) \to 0 \]
  and indeed, by the snake lemma construction of the connecting map \(\delta\)
  we lift a local section  \(\sigma \in \ker \gamma'' \subseteq \sO_X\) along \(\beta\), apply \(\gamma:
  \sO_X \to \oplus_i \sO_{D_i}\) to obtain a local section \( (\sigma|_{D_i})
  \in \ker \beta \subseteq \oplus_i \sO_{D_i} \), and then lift along \(\alpha:
  \sO_{D_1} \to \oplus_i \sO_{D_i}\) --- the net result is \(\sigma|_{D_1}\) as claimed.
\end{proof}

\begin{remark}
  Here we sketch a different proof of \cref{lem:cech-cx-reg-seq-Cdiv}, which
  could potentially shed more light on what happens if \(D_1, \dots, D_N
  \subseteq X\) deviates from being a regular sequence. For each \(i\) let
  \(\sigma_i : \sO_X \to \sO_X(D_i  )\) be the canonical global section and let
  \(\sigma_i^\vee : \sO_X(-D_i) \to \sO_X\) be its dual. For each subset \(J
  \subseteq \{1, \dots, N\}\) let \(\sE_J := \oplus_{j \in J} \sO_X(D_j)\). For
  each such \(J\) we have a section \(\sigma_J = (\sigma_j | j \in J): \sO_X \to
  \sE_J\).  There's a map of chain complexes
  \[
    \begin{tikzcd}
      0 = \sE_{\emptyset} \arrow[r] & \bigoplus_{|J| =1} \sE_J \arrow[r] & \bigoplus_{|J| = 2} \sE_J \arrow[r] &\bigoplus_{|J| =3} \sE_J \arrow[r] &\cdots\\
      \sO_X \arrow[r] \arrow[u] & \bigoplus_{|J| = 1} \sO_X \arrow[r] \arrow[u, "\oplus \sigma_J"] & \bigoplus_{|J| = 2}  \sO_X \arrow[r] \arrow[u, "\oplus \sigma_J"] & \bigoplus_{|J| = 3}  \sO_X \arrow[u, "\oplus \sigma_J"] \arrow[r] & \cdots
    \end{tikzcd}
  \] where the horizontal differentials are alternating sums of summand inclusions (in effect, they come from the singular co-chain complex of the \(N-1\)-simplex \(\Delta^{N-1}\)) and the vertical maps are induced by the \(\sigma_i\). Applying the Koszul construction to the individual maps \( \sigma_J :\sO_X \to \sE_J\) (along with the usual sign trick) then results in a double complex \(C^{\bullet \bullet}\) with \(C^{pq} = \bigoplus_{|J| = p} \wedge^{-q} \sE_J^\vee \).

  I conjecture\footnote{It seems a proof by induction on \(N\)
    analogous to the argument in \cref{lem:cech-cx-reg-seq-Cdiv} works, although
    it is combinatorially more involved.} that the \emph{horizontal} complexes
  \[ C^{\bullet q} : 0 \to \cdots \to 0 \to \bigoplus_{|J| = -q} \wedge^{-q}
    \sE_J^\vee \to \bigoplus_{|J| = -q+ 1} \wedge^{-q} \sE_J^\vee \to \cdots \to
    \bigoplus_{|J| = N} \wedge^{-q} \sE_J^\vee  =  \wedge^{-q} (\bigoplus_{i=1}^N \sO_X(-D_i)) \] are
  exact for \(q > -N\), and hence \(\Tot (C^{\bullet\bullet})\) is
  quasi-isomorphic to \( \wedge^{N} (\oplus_{i=1}^N \sO_X(-D_i)) =\sO_X(-\sum_i
  D_i)\). On the other hand, the vertical complexes
  \[C^{p \bullet } : 0 \to \cdots \to 0 \to \bigoplus_{|J| = p} \wedge^{p}
    \sE_J^\vee \to \bigoplus_{|J| = p} \wedge^{p-1} \sE_J^\vee \to \cdots \to
    \bigoplus_{|J| = p} \sE_J^\vee \to \bigoplus_{|J| = p} \sO_X\] are direct sums
  of Koszul complexes by design, and so their cohomology is
  \[h^q(C^{p \bullet}) = \bigoplus_{|J| = p} \sTor_{-q}^{\sO_X}(\sO_{D_J},
    \sO_X),\]
  which reduces to
  \[
    h^q(C^{p \bullet}) =
    \begin{cases}
      \bigoplus_{|J| = p} \sO_{D_J} & \text{ if } q=0  \\
      0                             & \text{otherwise}
    \end{cases}
  \] \emph{precisely} when the sequence \(D_1,\dots, D_N\) is regular \cite[\S 18 Thm. 43]{MR575344}, \cite[Lem. A.5.3]{MR1644323}. As a technical aside, this approach might show that \cref{lem:cech-cx-reg-seq-Cdiv} holds under slightly weaker hypotheses of \emph{Koszul regularity} (see e.g. \cite[\href{https://stacks.math.columbia.edu/tag/062D}{Tag 062D}]{stacks-project}).
\end{remark}

\subsection{Replacing the ideal sheaf with a filtered complex}
\label{sec:replacing-O-delta}
Let \(X\) be a locally noetherian scheme and let \(D_1, \dots, D_N \subseteq X\)
be a regular sequence of effective Cartier divisors, with sum \(\Delta_X :=
\sum_{i=1}^N D_i\). By  \cref{lem:cech-cx-reg-seq-Cdiv} the ideal sheaf
\(\lstrshf{X}\) is quasi-isomorphic to \(\cone (\sO_X \to R\epsilon_*
\sO_{X_\bullet} )[-1]\), which for convenience moving forward we give a
name:\footnote{This notation is chosen to align with the fact that over \(\CC\)
  and when \(\lsp{X}\) is a simple normal crossing pair, the complex
  \eqref{eq:res-of-log-hodge} represents the \(0\)th graded part of the Du Bois
  complex of the pair \(\lsp{X}\).}
\begin{definition}
  \(\underline{\Omega}_{X, \Delta_X}^0 := \cone (\sO_X \to R\epsilon_* \sO_{X_\bullet} )[-1]\).
\end{definition}
By \cref{lem:cech-cx-reg-seq-Cdiv} and its proof this complex has the explicit
representation
\[
  \begin{tikzcd}[row sep=small]
    \sO_X \arrow[r] & \bigoplus_i \sO_{D_i} \arrow[r] & \bigoplus_{i < j} \sO_{D_i \cap D_j} \arrow[r] & \cdots \arrow[r] & \sO_{\cap_i D_i} \\
    0 & 1 & 2 & \dots & N
  \end{tikzcd}
\]
We can give \(\underline{\Omega}_{X, \Delta_X}^0\) a descending filtration by truncations
\[\underline{\Omega}_{X,\Delta_X}^0 = \sigma_{\geq
    0}\underline{\Omega}_{X,\Delta_X}^0 \supset \sigma_{\geq
    1}\underline{\Omega}_{X,\Delta_X}^0 \supset \sigma_{\geq
    2}\underline{\Omega}_{X,\Delta_X}^0 \supset \cdots \] where
\begin{equation}
  \label{eq:trunc-filt}
  (\sigma_{\geq i}
  \underline{\Omega}_{X,\Delta_X}^0)^j =
  \begin{cases}
    0                                                              & \text{ if } j <i    \\
    (\underline{\Omega}_{X,\Delta_X}^0)^j = \epsilon_{j-1*} \sO_{X_{j-1}} =
    \prod_{J \subseteq \{1,\dots, N\} \, | \, |J| = j\}} \sO_{D_J} & \text{  otherwise }
  \end{cases}
\end{equation}
Using this filtration we obtain a spectral sequence for higher
direct images.

\begin{corollary}
  \label{cor:desc-ss-hdi}
  Let \(S\) be a locally noetherian scheme and let \(f: X \to S\) be a finite
  type morphism. Let \(D_1, \dots, D_N\subseteq X\) be a regular sequence of
  effective Cartier divisors, with sum \(\Delta_X\). Then there is a filtered
  complex \((Rf_* \underline{\Omega}_{X,\Delta_X}^0, F)\) whose cohomology
  computes the higher direct images \(R^{i+j}f_* \lstrshf{X}\). For each \(i\)
  there is a distinguished triangle
  \[ F^{i+1} Rf_* \underline{\Omega}_{X,\Delta_X}^0 \to  F^{i} Rf_*
    \underline{\Omega}_{X,\Delta_X}^0 \to \prod_{J \subseteq \{1,\dots, N\} \, |
      \, |J| = i\}} Rf_* \sO_{D_J} \to \cdots
  \]
  In particular, there is a spectral sequence
  \[ E_1^{ij} = \prod_{J \subseteq \{1,\dots, N\} \, |
      \, |J| = i\}} R^jf_* \sO_{D_J}\implies R^{i+j}f_* \lstrshf{X} \]
\end{corollary}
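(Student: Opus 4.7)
The plan is to transport the question from \(\lstrshf{X}\) to the explicit \v{C}ech-type complex \(\underline{\Omega}_{X, \Delta_X}^0\) supplied by \cref{lem:cech-cx-reg-seq-Cdiv}, and then read off the spectral sequence from the standard machinery of filtered complexes. By that lemma there is a quasi-isomorphism \(\lstrshf{X} \simeq \underline{\Omega}_{X, \Delta_X}^0\), so I define the candidate filtered complex as \(Rf_* \underline{\Omega}_{X, \Delta_X}^0\); the cohomological claim \(h^{i+j}(Rf_* \underline{\Omega}_{X,\Delta_X}^0) = R^{i+j} f_* \lstrshf{X}\) is then immediate.

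Next I would equip \(\underline{\Omega}_{X, \Delta_X}^0\) with the stupid (brutal) truncation \(\sigma_{\geq \bullet}\) from \eqref{eq:trunc-filt}, which is a descending filtration by honest subcomplexes of finite length (\(\sigma_{\geq i} = 0\) for \(i > N+1\)). Taking a Cartan--Eilenberg injective resolution \(\underline{\Omega}_{X, \Delta_X}^0 \to \sI^{\bullet,\bullet}\) --- which by construction respects the brutal truncation term-by-term --- I define \(F^i Rf_* \underline{\Omega}_{X, \Delta_X}^0 := f_*\Tot(\sigma_{\geq i} \sI^{\bullet,\bullet}) \simeq Rf_*(\sigma_{\geq i}\underline{\Omega}_{X, \Delta_X}^0)\). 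The termwise-split short exact sequence of complexes
\[
0 \to \sigma_{\geq i+1}\underline{\Omega}_{X, \Delta_X}^0 \to \sigma_{\geq i} \underline{\Omega}_{X, \Delta_X}^0 \to \Bigl(\prod_{|J|=i} \sO_{D_J}\Bigr)[-i] \to 0
\]
(with the convention \(D_\emptyset = X\) when \(i=0\)) then yields the asserted distinguished triangle upon application of \(Rf_*\), up to the implicit shift \([-i]\) on the third term.

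Finally, the standard spectral sequence of the filtered complex \((Rf_* \underline{\Omega}_{X, \Delta_X}^0, F)\) has \(E_1\)-page
\[
E_1^{ij} = R^{i+j} f_* \gr^i_F \underline{\Omega}_{X, \Delta_X}^0 = R^{i+j} f_*\Bigl(\prod_{|J|=i} \sO_{D_J}[-i]\Bigr) = \prod_{|J|=i} R^j f_* \sO_{D_J},
\]
converging to \(R^{i+j} f_* \underline{\Omega}_{X, \Delta_X}^0 \cong R^{i+j} f_* \lstrshf{X}\). I do not anticipate a serious obstacle: because the filtration has only finitely many nonzero steps convergence is automatic, and the whole corollary is a formal unwinding of \cref{lem:cech-cx-reg-seq-Cdiv} combined with the spectral sequence of a filtered bounded complex. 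The only point requiring care is degree bookkeeping --- the shift in the graded pieces \((\underline{\Omega}_{X, \Delta_X}^0)^i[-i]\) is exactly what turns \(R^{i+j} f_* \gr^i_F\) into \(R^j f_* \sO_{D_J}\) in the \(E_1\)-page.
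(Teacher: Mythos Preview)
Your proposal is correct and follows essentially the same route as the paper: the paper simply declares \(F := Rf_* \sigma\) and notes that the resulting spectral sequence is the usual hypercohomology spectral sequence, which is exactly what you spell out via the Cartan--Eilenberg resolution and the brutal truncation. Your treatment is in fact more detailed than the paper's one-line proof, including the convergence remark and the degree bookkeeping for the graded pieces.
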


The filtration \(F\) is defined as \(F = Rf_* \sigma\), and the resulting
spectral sequence is just the usual hypercohomology spectral sequence.

\begin{remark}
  Viewing \(\epsilon: X_\bullet \to X\) as a sort of resolution of the pair
  \(\lsp{X}\), we can consider the spectral sequence of \cref{cor:desc-ss-hdi}
  as a sort of \emph{descent} spectral sequence (see \cite[Vbis]{MR0354653},
  \cite{conradCOHOMOLOGICALDESCENT}).
\end{remark}

\section{Simple normal crossing divisors and thriftyness}
\label{sec:snc-thrifty}
\subsection{Definitions and basic properties}

\begin{definition}[{\cite[\S 7.8]{MR199181}}]
  \label{def:exc}
  A scheme \(X\) is \emph{excellent} if and only if
  \begin{itemize}
    \item \(X\) is locally noetherian,
    \item for every point \(x \in X\) the fibers of the natural map \(\Spec
          \sO_{X, x}^{\wedge} \to \Spec \sO_{X, x}\) are regular,
    \item for every integral \(X\)-scheme \(Z\) that is finite over an affine
          open of \(X\), there is a non-empty regular open subscheme \(U \subseteq
          Z\), and
    \item every scheme \(X'\) locally of finite type over \(X\) is catenary
          (that is, if \(x \in X'\) and  \(x \rightsquigarrow y\) is a specialization,
          then any 2 saturated chains of specializations \(x = x_0 \rightsquigarrow
          x_1 \rightsquigarrow \cdots \rightsquigarrow x_n = y\) have the same length).
  \end{itemize}
\end{definition}
If \(X\) is excellent, then the locus
\[\Reg(X) = \{x \in X \, | \, \sO_{X, x} \text{  is regular}\}\]
is open \cite[Prop. 7.8.6]{MR199181}; we will make repeated use of this fact.

We first relate the notion of a simple normal crossing pair to the regular
sequences of effective Cartier divisors considered in the previous section.

\begin{lemma}
  \label{lem:tcp-reg-seq}
  If \((X, \Delta_X = \sum_i D_i)\) is a simple normal crossing pair then
  \((D_i)\) is a regular sequence of effective Cartier divisors.
\end{lemma}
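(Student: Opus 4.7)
The plan is to reduce to a purely local statement at each point $x \in X$ and then invoke Cohen-Macaulayness of regular local rings. Fix $x \in X$ and let $I = I(x) = \{i \mid x \in D_i\}$. For each $i \in I$ choose a local generator $f_i \in \mathfrak{m}_x \subset \mathcal{O}_{X,x}$ of $D_i$; by \cref{def:reg-seq-div} we must show that $(f_i)_{i \in I}$ is a regular sequence in $\mathcal{O}_{X,x}$.

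The key observation is that \cref{def:snc} forces $\mathcal{O}_{X,x}$ to be a regular local ring, hence in particular Cohen--Macaulay. Applied to $J = I$, the snc hypothesis says that $D_I = \bigcap_{i \in I} D_i$ is regular of codimension $|I|$ in $X$; in particular
\[
\dim \mathcal{O}_{X,x}/(f_i \mid i \in I) \;=\; \dim \mathcal{O}_{D_I, x} \;=\; \dim \mathcal{O}_{X,x} - |I|.
\]
Thus $(f_i)_{i \in I}$ is a system of parameters in $\mathcal{O}_{X,x}$ that cuts the dimension down by exactly its length.

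I would then conclude by citing the standard criterion that in a Cohen--Macaulay local ring, a sequence of elements of the maximal ideal whose quotient has dimension reduced by the length of the sequence is automatically a regular sequence (this is, for instance, \cite[\href{https://stacks.math.columbia.edu/tag/00LF}{Tag 00LF}]{stacks-project} combined with the fact that a regular local ring is Cohen--Macaulay). This immediately yields the regularity of $(f_i)_{i \in I}$, and since $x$ was arbitrary the sequence $D_1,\ldots,D_N$ is a regular sequence of effective Cartier divisors in the sense of \cref{def:reg-seq-div}.

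There is no real obstacle; the only mild subtlety is that \cref{def:reg-seq-div} quantifies over arbitrary points $x$, but one only needs the snc hypothesis at the single subset $J = I(x)$ to carry out the dimension computation. Alternatively, one could give an inductive proof in $|I|$: since $f_{i_0}$ is a non-zero-divisor (because $D_{i_0}$ is Cartier and $\mathcal{O}_{X,x}$ is a domain), one passes to $\mathcal{O}_{D_{i_0},x}$, notes that the divisors $D_j \cap D_{i_0}$ make $D_{i_0}$ into an snc pair, and applies the inductive hypothesis — but the Cohen--Macaulay route above is shorter and makes the essential content transparent.
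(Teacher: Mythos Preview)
Your argument is correct. The Cohen--Macaulay criterion you invoke (in a CM local ring, elements of the maximal ideal that cut the dimension by exactly their number form a regular sequence) is precisely what is needed, and your dimension count using the snc hypothesis for \(J = I(x)\) is right. One terminological nit: \((f_i)_{i\in I}\) is in general only \emph{part of} a system of parameters, not a full one, but this does not affect the cited result.

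Compared with the paper's proof, you take a slightly different and arguably cleaner route. The paper argues by induction on \(|I(x)|\), at each step using that \(A/(f_j \mid j \in J)\) is regular for \emph{every} \(J \subseteq I(x)\) and reducing to the one-variable statement that if \(A\) is regular local and \(A/f\) is regular of dimension \(\dim A - 1\) then \(f\) is a non-zero-divisor. Your approach uses only the single subset \(J = I(x)\) and appeals once to Cohen--Macaulayness of \(\sO_{X,x}\); the inductive alternative you sketch at the end is essentially the paper's argument. Your version makes clearer that the full strength of \cref{def:snc} (regularity of all \(D_J\)) is not needed for this lemma---only the codimension condition for \(J = I(x)\).
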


\begin{proof}
  Let \(x \in X\) be a point and as above let \(I(x) = \{i \, | \, x  \in
  D_i\}\). Let \(f_j \in \mathfrak{m}_x \subseteq \sO_{X,x}\) be local
  generators for the \(D_j\), for \(j \in I(x)\). By hypothesis for any subset
  \(J \subseteq I(x)\) the quotient \(A/(f_j \, | \, j \in J)\) is regular, and
  so by induction we reduce to the commutative algebra statement that if \(A\)
  is a regular local ring, \(f \in A\) and \(A/f\) is a regular local ring with
  dimension \(\dim A -1\) then \(f \) is a non-0-divisor (see for example \cite[\href{https://stacks.math.columbia.edu/tag/0AGA}{Tag 0AGA}]{stacks-project}).
\end{proof}

\begin{lemma}
  \label{lem:tc-locus-open2}
  Let \(X\) be an integral excellent scheme with an effective Weil
  divisor  \(\Delta_X = \sum_i D_i\), and for each \(i \) let \(\sI_i
  \subseteq \sO_X\) be the ideal sheaf of \(D_i\). Then the locus
  \[ \snc(X, \Delta_X) := \{ x \in X \, | \, \sum_{ i \in I(x)} \sI_{i, x}^\wedge
    \subseteq \sO_{X,x}^{\wedge} \text{ is a simple normal crossing pair} \}
    \subseteq X \] is open, and this is the largest open set \(U \subseteq X\)
  such that \((U, \Delta_X|_U)\) is a simple normal crossing pair.
\end{lemma}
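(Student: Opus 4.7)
The plan is to reduce everything to the condition defining $\snc(X, \Delta_X)$ at a point $x$ holding on the local ring $\sO_{X,x}$ (rather than its completion), and then to apply openness of the regular locus in an excellent scheme. The main tool is \textbf{faithfully flat descent of regularity} along the map $\sO_{X,x} \to \sO_{X,x}^{\wedge}$: since $X$ is excellent, this map is faithfully flat with geometrically regular fibers, so for any ideal $J \subseteq \sO_{X,x}$, the quotient $\sO_{X,x}/J$ is regular of a given dimension if and only if $\sO_{X,x}^{\wedge}/J\sO_{X,x}^{\wedge}$ is. Applied with $J = \sum_{j \in J'} \sI_{j,x}$ for the finitely many subsets $J' \subseteq I(x)$, this yields the reformulation: $x \in \snc(X, \Delta_X)$ if and only if $\sO_{X,x}$ is regular and, for every $J' \subseteq I(x)$, the scheme-theoretic intersection $D_{J'} = \cap_{j \in J'} D_j$ is regular of codimension $|J'|$ at $x$.

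For openness, take $x \in \snc(X, \Delta_X)$. First, $\Reg(X)$ is open in $X$ by excellence, so $x$ has an open neighborhood on which $X$ is regular. Next, for each $J' \subseteq I(x)$ the subscheme $D_{J'}$ is excellent (as a closed subscheme of the excellent scheme $X$), so $\Reg(D_{J'})$ is open in $D_{J'}$; shrinking to a Zariski neighborhood on which the dimension of the regular locus is also locally constant (using that $D_{J'}$ is equidimensional at $x$), we find an open $V_{J'} \subseteq X$ in which $D_{J'} \cap V_{J'}$ is regular of codimension $|J'|$. Finally, since $x \notin D_i$ for $i \notin I(x)$ and each such $D_i$ is closed, we may further shrink to an open $W \ni x$ disjoint from $\bigcup_{i \notin I(x)} D_i$. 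Intersecting these finitely many open neighborhoods of $x$ yields an open $V \ni x$ on which the remaining components of $\Delta_X$ are precisely those indexed by $i \in I(x)$, and for every subset $J' \subseteq \{1,\dots,N\}$, either $D_{J'} \cap V = \emptyset$ (if $J' \not\subseteq I(x)$) or else $D_{J'} \cap V$ is regular of codimension $|J'|$ in $V$. Hence $(V, \Delta_X|_V)$ is an snc pair, and by the reformulation above $V \subseteq \snc(X, \Delta_X)$.

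For the maximality assertion, suppose $U \subseteq X$ is open with $(U, \Delta_X|_U)$ snc. For $x \in U$, local generators $f_j \in \sO_{X,x}$ for the $D_j$ with $j \in I(x)$ form part of a regular system of parameters of the regular local ring $\sO_{X,x}$, and all intersections $D_{J'}$ with $J' \subseteq I(x)$ are regular of the right codimension at $x$. These conditions are preserved by the faithfully flat base change $\sO_{X,x} \to \sO_{X,x}^{\wedge}$, so $x \in \snc(X, \Delta_X)$, proving $U \subseteq \snc(X, \Delta_X)$. Combined with the previous paragraph, $\snc(X, \Delta_X)$ is therefore the largest such open subset.

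The main obstacle is the careful bookkeeping in Step 2 of reducing the codimension/regularity condition for all subsets $J' \subseteq I(x)$ simultaneously to a single Zariski open neighborhood; the excellence hypothesis, which guarantees both openness of the regular locus and the descent of regularity under completion, is what makes this possible. Everything else is a routine unwinding of definitions.
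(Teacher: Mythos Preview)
Your proof is correct and follows essentially the same approach as the paper: both use faithfully flat descent of regularity along the completion map $\sO_{X,x} \to \sO_{X,x}^{\wedge}$ to reduce the defining condition to one on $\sO_{X,x}$, and then use openness of the regular locus in excellent schemes to find a suitable neighborhood. Your version is somewhat more thorough than the paper's, explicitly handling the codimension condition, explicitly discarding the components $D_i$ with $i \notin I(x)$, and explicitly verifying the maximality assertion, all of which the paper leaves implicit.
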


We could alternatively just declare \(\snc(X, \Delta_X)\) to be the largest open
set \(U \subseteq X\) such that \((U, \Delta_X|_U)\) is a simple normal crossing
pair; the content of the lemma is that in some sense the snc locus is ``already open.''

\begin{proof}
  Suppose \(J \subseteq \{1,\dots, N\}\), and write \(\sI_J = (f_j \in
  \sO_{X,x}\, | \, j \in J) \subseteq \sO_{X,x}\). Consider the co-cartesian
  diagram of noetherian local rings
  \[
    \begin{tikzcd}
      \sO_{X, x}^\wedge \arrow[r] & \sO_{X, x}^\wedge/\sI_J \sO_{X, x}^\wedge \simeq (\sO_{X,x}/\sI_J)^\wedge \\
      \sO_{X, x} \arrow[r] \arrow[u] & \sO_{X,x}/\sI_J \arrow[u]
    \end{tikzcd}
  \]
  The vertical homomorphisms are faithfully flat and by hypothesis \(\sO_{X,
    x}^\wedge/\sI_J \sO_{X, x}^\wedge\) is regular --- since regularity satisfies
  faithfully flat descent, \(\sO_{X,x}/\sI_J\) is also regular. Thus \(D_J\) is
  regular at the point \(x \in D_J\), and as \(X\) is excellent by hypothesis
  the regular locus of \(D_J\) is open. Letting \(x \in U_J \subseteq X\) be a
  neighborhood such that \(D_J \cap U_J \subseteq D_J\) is regular and then
  letting \(U = \cap_{J} D_J\) gives a neighborhood of \(x\) such that \((U,
  (D_i \cap U))\) is a simple normal crossing pair.
\end{proof}

Note that for a simple normal crossing pair \(\lsp{X}\), since the intersections
\(D_J = \cap_{j \in J}D_j\) are regular their connected components and
irreducible components coincide.  For convenience we recall the definitions of
strata and thriftiness mentioned in the Introduction.
\begin{definition}
  \label{def:abstract-strata}
  A \emph{stratum} of a simple normal crossing pair \((X, \Delta_X = \sum_i D_i)\) is a connected
  (equivalently, irreducible) component of an intersection \(D_J = \cap_{j \in J}D_j\).
\end{definition}

\begin{definition}[{compare with \cite[Def. 2.79-2.80]{MR3057950}, \cite[\S 1,
          discussion before Def. 10]{MR3539921}}]
  \label{def:abstract-thriftiness}
  Let \((S, \Delta_S = \sum_i D_i)\) be a pair in the sense of \cref{def:pair},
  and assume
  \(\Delta_S\) is reduced and effective. A
  separated, finite type birational morphism \(f: X \to S\) is \emph{thrifty
    with respect to} \(\Delta_S \) if and only if
  \begin{enumerate}
    \item \label{item:over-gen-pts} \(f\) is an isomorphism over the generic point of every stratum of
          \(\snc(S, \Delta_S)\) and
    \item \label{item:at-gen-pts} letting \(\tilde{D}_i = f^{-1}_*D_i\) for \(i
          = 1, \dots, N\) be the strict transforms of the divisors \(D_i\), and
          setting \(\Delta_X := \sum_i \tilde{D}_i\), the map \(f\) is
          an isomorphism at the generic point of every stratum of \(\snc(X,
          \Delta_X)\).
  \end{enumerate}
\end{definition}

The restriction that \(D_i \cap \Reg(S) \neq \emptyset\) for all \(i\) ensures
that if \(\eta \in D_i\) is a generic point of a component, then \(\eta \in
\Reg(S)\). Since on a regular scheme every Weil divisor is Cartier, and as \(S\)
is excellent and \(D_i\) is reduced by hypothesis, there is a neighborhood
\(\eta \in U \subseteq S\) such that \(U, D_i\cap U\) is a simple normal crossing
pair. In other words, \(\eta \in \snc(S, \Delta_S)\) is the generic point of a
stratum, so \cref{item:over-gen-pts} implies  \(f^{-1}(\eta)\) is a single
(non-closed) point. For our purposes the strict transform \(\tilde{D}_i\) can be
\emph{defined} as
\[\tilde{D}_i := \bigcup_{\eta \in D_i \text{generic}} \overline{f^{-1}(\eta)}
  \subseteq X.\] Since \(f\) is an isomorphism over \(\eta\), we also see
\(f^{-1}(\eta) \subseteq \snc(X, \Delta_X)\).

\begin{lemma}
  \label{lem:common-open}
  Let \(S\) be an integral excellent noetherian scheme with a sequence of
  reduced effective Weil divisors \(D_1,\cdots, D_N \subseteq S\) such that no
  component of \(\cup_i D_i\) is contained in \(\Sing(X)\), and let \(f: X \to
  S\) be a separated, finite type birational morphism. Then, \(f\) is thrifty
  if and only if there is a diagram of separated finite type \(S\)-schemes
  \[ S \hookleftarrow U \hookrightarrow X \] with both morphisms (necessarily
  dense) open immersions, such that \(U\) contains all generic points of strata
  of \(\snc(S, \Delta_S) \) and \(\snc(X,\Delta_X)\).
\end{lemma}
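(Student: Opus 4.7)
My plan is to prove both directions of the biconditional separately; the reverse direction is essentially formal, while the forward direction requires constructing $U$ as the isomorphism locus of $f$ and verifying it is large enough, with the main subtlety being to spread out pointwise isomorphisms to open neighborhoods.

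For $(\Leftarrow)$, suppose we are given open immersions $j_X \colon U \hookrightarrow X$ and $j_S = f \circ j_X \colon U \hookrightarrow S$ (the compatibility $j_S = f \circ j_X$ is implicit in the statement, since the diagram is one of $S$-schemes). If $\eta$ is a generic point of a stratum of $\snc(S, \Delta_S)$, then by hypothesis $\eta \in j_S(U)$; writing $u \in U$ for its unique preimage under $j_S$, the factorization shows that $f$ restricted to the open neighborhood $j_X(U)$ of $j_X(u)$ coincides with the open immersion $j_S$, so $f$ is an isomorphism over $\eta$. This verifies condition (i) of \cref{def:abstract-thriftiness}. Condition (i) in turn ensures that the strict transforms $\tilde D_i$ are well-defined as in the discussion following \cref{def:abstract-thriftiness}, and the same factorization argument, applied now to generic points of strata of $\snc(X, \Delta_X)$ lying in $j_X(U)$, yields condition (ii).

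For $(\Rightarrow)$, assume $f$ is thrifty and let $U \subseteq X$ be the largest open subscheme on which $f$ restricts to an open immersion into $S$. This $U$ is well-posed because being an open immersion is Zariski-local on source and target, so a union of opens of $X$ on which $f$ is an open immersion is itself such; setting $U$ to be this union gives the required diagram $S \hookleftarrow U \hookrightarrow X$. It remains to show $U$ contains all required generic points. For $\eta$ a generic point of a stratum of $\snc(S, \Delta_S)$, condition (i) of thriftiness supplies a unique preimage $\eta' \in X$ with $\sO_{S, \eta} \xrightarrow{\sim} \sO_{X, \eta'}$; since $f$ is of finite type between noetherian schemes, this stalkwise isomorphism spreads out to an isomorphism of suitable affine open neighborhoods of $\eta'$ and $\eta$, so $\eta' \in U$. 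The analogous argument using condition (ii) places each generic point of a stratum of $\snc(X, \Delta_X)$ directly into $U$.

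The main obstacle, while standard, is the spreading-out step: turning an isomorphism of local rings at a point into an isomorphism on some open neighborhood. Concretely, for a finite-type ring map $A \to B$ between noetherian rings and a prime $\mathfrak{q} \subseteq B$ lying over $\mathfrak{p} = \mathfrak{q} \cap A$ with $A_\mathfrak{p} \xrightarrow{\sim} B_\mathfrak{q}$, one needs $s \notin \mathfrak{p}$ and $t \notin \mathfrak{q}$ such that already $A_s \xrightarrow{\sim} B_t$. This follows by picking finite $A$-algebra generators of $B$, expressing each as an element of $A_\mathfrak{p} \cong B_\mathfrak{q}$ and clearing denominators; equivalently, the locus of primes of $B$ where $f$ is a stalkwise isomorphism is open because $B$ is of finite presentation over $A$. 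This is the single place where the finite-type noetherian hypotheses are essential, and with it in hand the biconditional becomes a clean reformulation of thriftiness as the existence of a common open subscheme of $S$ and $X$ containing every combinatorially significant generic point.
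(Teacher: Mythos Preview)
Your argument has a genuine gap in the $(\Rightarrow)$ direction. The claim that ``being an open immersion is Zariski-local on source and target, so a union of opens of $X$ on which $f$ is an open immersion is itself such'' is false: open immersions are local on the \emph{target}, not on the source. For instance, if $X = S \sqcup S$ and $f$ is the identity on each copy, every point of $X$ has a neighborhood on which $f$ is an open immersion, but $f$ itself is not one. So as written, there is no reason your $U$ maps to $S$ via an open immersion.

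What rescues the construction is precisely the pair of hypotheses you never invoke here: separatedness and birationality of $f$ (together with $S$ being integral, hence reduced). If $V_1, V_2 \subseteq X$ are opens with $f|_{V_i}$ an open immersion onto $W_i := f(V_i)$, then the inverses $g_i: W_i \to X$ are sections of $f$; on $W_1 \cap W_2$ they agree on the dense birational locus, and since $S$ is reduced and $f$ is separated they agree everywhere, so they glue. This is exactly the argument the paper gives, in slightly different packaging: the paper takes $U$ to be the maximal domain of definition of $f^{-1}$ (an open in $S$ rather than in $X$), and its key step---showing that a neighborhood $V \ni \xi$ on which $f$ is an isomorphism onto $W = f(V)$ actually equals the full preimage $f^{-1}(W)$---is the same section-gluing trick (the section $W \to V \hookrightarrow X_W$ is a proper dense open immersion, hence an isomorphism). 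The same observation is implicitly needed in your $(\Leftarrow)$ direction to pass from ``$f|_{j_X(U)}$ is an open immersion'' to ``$f$ is an isomorphism \emph{over} $\eta$,'' which is a statement about the entire fiber $f^{-1}(\eta)$.

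Separately, the spreading-out paragraph is largely a red herring. Under the standard reading of \cref{def:abstract-thriftiness}---``isomorphism over $\eta$'' meaning $f^{-1}(W) \to W$ is an isomorphism for some neighborhood $W \ni \eta$, and ``isomorphism at $\xi$'' meaning $f$ is an open immersion on some neighborhood of $\xi$---no spreading out is needed at all: the required neighborhoods are handed to you. If instead you are working from a bare stalk isomorphism, your sketch does not quite get there either: producing $A_s \simeq B_t$ from $A_{\mathfrak p} \simeq B_{\mathfrak q}$ requires more than clearing denominators of generators, since nothing so far prevents other points of $\Spec B$ from lying over $\mathfrak p$; again one must invoke separatedness and birationality.
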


\begin{proof}
  Since the existence of a common dense open \( S \hookleftarrow U
  \hookrightarrow X \) as in the statement of the lemma certainly guarantees
  \cref{item:at-gen-pts,item:over-gen-pts}, we focus on the ``only if,'' and in
  fact we show that one can take \(U =\) the maximal domain of definition of
  \(f^{-1}: S \dashrightarrow X\). By \cref{item:over-gen-pts} of
  \cref{def:abstract-thriftiness} this \(U \) contains all generic points of
  strata of \(\snc (S, \Delta_S)\).

  Suppose \(\xi \in \snc(X,\Delta_X)\) is a generic point of a stratum. By
  hypothesis there is a neighborhood \(\xi \in V \subseteq X\) such that \(f|_V:
  V \isom S\) is an isomorphism onto its image. Then \(W :=f(V)\) is a Zariski
  neighborhood of \(f(\xi)\) and the inverse of \(f|_V\) gives a section of the
  birational map \(X_W = X\times_S W \to W\).
  \[
    \begin{tikzcd}
      V \arrow[dr, "f|_V"'] \arrow[r, hook] & X_W \arrow[d, "f_W"] \\
      & W  \\
    \end{tikzcd}
  \]
  But then the inclusion \(V \hookrightarrow X_W\) is a proper dense open
  immersion, hence an isomorphism.
\end{proof}

\begin{remark}
  It seems that the above proof shows in addition that \(f(\xi) \in S\) is the
  generic point of a stratum of \(\snc(S, \Delta_S)\).
\end{remark}

We will make repeated use of a few blowup lemmas from the construction of Nagata
compactifications in \Cref{sec:constructions} --- here, they are used to show
that thrifty morphisms can be dominated by certain admissible blowups.
\begin{lemma}[{\cite[Lem. 2.4, Rmk. 2.5, Cor. 2.10]{MR2356346}}]
  \label{lem:common-blp}
  Let \(S\) be a quasi-compact, quasi-separated scheme.

  \begin{enumerate}
    \item If \(X\) is a quasi-separated quasi-compact \(S\)-scheme and \(Y\) is
          a proper \(S\)-scheme, and if \(f: U \to Y\) is an \(S\)-morphism defined on
          a dense open \(U \subseteq X\), then there exists a \(U\)-admissible blowup
          \(\tilde{X} \to X\) and an \(S\)-morphism \(\tilde{f}: \tilde{X} \to Y\)
          extending \(f\).

    \item Let \(j_i: U \to X_i\) be a finite collection of dense open immersions
          between finite type separated \(S\)-schemes. Then there exist
          \(U\)-admissible blowups \(X'_i \to X_i\) and a separated finite type
          \(S\)-scheme \(X\), together with open immersions \(X'_i  \inj X\) over
          \(S\), such that the \(X'_i\) cover \(X\) and the open immersions \(U \inj
          X'_i \inj X\) are all the same.
  \end{enumerate}
\end{lemma}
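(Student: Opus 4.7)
This lemma is stated as a citation to Conrad's notes on Nagata compactification, so my plan is to sketch the main ideas rather than reconstruct the full argument. The two parts rest on the fundamental principle that proper modifications of qcqs schemes are dominated by $U$-admissible blowups (Raynaud--Gruson), and so both reduce to constructions of closures in fiber products.

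For part (i), the plan is to form the scheme-theoretic closure $\bar{\Gamma}$ of the graph $\Gamma_f \subseteq U \times_S Y$ inside $X \times_S Y$. Since $Y \to S$ is proper, the projection $X \times_S Y \to X$ is proper, and therefore the induced morphism $\pi: \bar{\Gamma} \to X$ is proper as well. The restriction of $\pi$ to the preimage of $U$ is precisely $\Gamma_f \to U$, which is an isomorphism, so $\pi$ is a proper birational morphism that is an isomorphism over the dense open $U$. By Raynaud--Gruson flattening (or equivalently by the fact that every proper modification of a qcqs scheme which is an isomorphism over a qc open is dominated by a $U$-admissible blowup) there is a $U$-admissible blowup $\tilde{X} \to X$ admitting an $X$-morphism $\tilde{X} \to \bar{\Gamma}$. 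Composing with the second projection $\bar{\Gamma} \to Y$ yields the desired $\tilde{f}$.

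For part (ii), my plan is to induct on the number $n$ of schemes, reducing to $n = 2$. Given $j_1: U \inj X_1$ and $j_2: U \inj X_2$, I would form the scheme-theoretic closure $Z \subseteq X_1 \times_S X_2$ of the diagonal image of $U$. The projections $\pi_i: Z \to X_i$ are separated, finite-type, birational, and are isomorphisms over $U$. I then want to produce $U$-admissible blowups $X_i' \to X_i$ together with open immersions $X_1' \inj X$ and $X_2' \inj X$ covering a common separated finite-type $S$-scheme $X$, built by gluing $X_1'$ and $X_2'$ along the identifications coming from $Z$. The key point is that after suitable $U$-admissible blowups of $X_i$ (obtained by pulling back admissible ideals from $Z$), the projections $\pi_i$ become isomorphisms onto open subschemes of $X_1', X_2'$, and these open subschemes can be glued to form $X$. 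The general $n$ case proceeds analogously, by working inside the $n$-fold product $X_1 \times_S \cdots \times_S X_n$.

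The step I expect to be the main obstacle is the compatibility required for gluing in (ii): the blowups $X_i'$ must be constructed so that their pairwise overlaps (viewed inside the eventual $X$) match as open subschemes, which is essentially the assertion that the rational transition maps $X_i \dashrightarrow X_j$ extend to open immersions between open subschemes of the $X_i'$ containing $j_i(U)$. Getting this compatibility requires simultaneous (rather than iterated pairwise) modifications, which is exactly why one works with the join inside the full product $\prod_i X_i$ and pulls back a single common blowup ideal to each factor. As the detailed verification is technical but completely documented in \cite[Lem.~2.4, Rmk.~2.5, Cor.~2.10]{MR2356346}, I would simply cite this source in the write-up.
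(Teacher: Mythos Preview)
The paper provides no proof of this lemma at all; it is simply quoted from \cite[Lem.~2.4, Rmk.~2.5, Cor.~2.10]{MR2356346} and used as a black box. Your proposal to sketch the graph-closure plus Raynaud--Gruson argument for (i) and the closure-in-the-product gluing for (ii), then defer to Conrad for details, is a correct outline of the cited proofs and goes beyond what the paper itself does.
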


\begin{corollary}
  \label{cor:common-adm-blp}
  There exist \(U\)-admissible blowups
  \[
    \begin{tikzcd}[column sep=large]
      \tilde{X} \arrow[r, hook, "\text{open imm.}"] \arrow[d] & \tilde{S} \arrow[d] \\
      X \arrow[r, "f"] & S
    \end{tikzcd}
  \] In particular if \(f\) is proper then \(X\) and \(S\) have a common \(U\)-admissible blowup.
\end{corollary}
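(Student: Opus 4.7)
The plan is to apply part (ii) of \cref{lem:common-blp} to the pair of dense open immersions $U \hookrightarrow X$ and $U \hookrightarrow S$ provided by \cref{lem:common-open}, and then invoke properness of admissible blowups to collapse the auxiliary ambient scheme onto the blowup of $S$. Concretely, \cref{lem:common-blp}(ii) will produce $U$-admissible blowups $\tilde{X} \to X$ and $\tilde{S} \to S$, a separated finite-type $S$-scheme $W$, and open immersions $\tilde{X}, \tilde{S} \hookrightarrow W$ over $S$ that cover $W$ and agree when restricted to $U$. It will then suffice to show $\tilde{S} = W$ as open subschemes, since the open immersion $\tilde{X} \hookrightarrow W$ will then become the desired open immersion $\tilde{X} \hookrightarrow \tilde{S}$ over $S$.

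The key observation is that an admissible blowup is proper, so $\tilde{S} \to S$ is proper. Since $W \to S$ is separated by construction, the open immersion $\tilde{S} \hookrightarrow W$, regarded as a morphism of $S$-schemes, is itself proper, hence factors as an isomorphism onto a clopen subscheme of $W$. Thus $\tilde{S}$ is a union of connected components of $W$, and to conclude $\tilde{S} = W$ it is enough to argue that $W$ is connected.

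The most delicate step is this connectedness check, which is where integrality comes in. Since $S$ is integral by the hypotheses inherited from \cref{lem:common-open} and $X$ is birational to $S$, both $X$ and $S$ are integral; blowups of integral schemes are integral, so $\tilde{X}$ and $\tilde{S}$ are each integral (in particular connected). The ``agree on $U$'' clause of \cref{lem:common-blp}(ii) says that the two embeddings $\tilde{X}, \tilde{S} \hookrightarrow W$ share the non-empty open $U$, so $W = \tilde{X} \cup \tilde{S}$ is connected. This forces $\tilde{S} = W$ and completes the argument.

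For the ``in particular'' statement, if $f \colon X \to S$ is proper then so is $\tilde{X} \to X \to S$, and running the same clopen-immersion argument on the open immersion $\tilde{X} \hookrightarrow \tilde{S}$ (using that $\tilde{S} \to S$ is separated) together with the connectedness of $\tilde{S}$ shows that $\tilde{X} \to \tilde{S}$ is an isomorphism. The resulting scheme is then simultaneously a $U$-admissible blowup of $X$ and of $S$. I do not anticipate any serious technical obstacle beyond pinpointing that \cref{lem:common-blp}(ii) already gives the entire result modulo the properness-plus-connectedness argument absorbing $W$ into $\tilde{S}$.
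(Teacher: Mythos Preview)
Your proposal is correct and follows essentially the same route as the paper: apply \cref{lem:common-blp}(ii) to the two dense open immersions $U \hookrightarrow X$ and $U \hookrightarrow S$, then use properness of admissible blowups together with separatedness of the ambient scheme $W$ to force $\tilde{S} = W$. The only difference is that the paper observes directly that the open immersions $\tilde{X}, \tilde{S} \hookrightarrow W$ are \emph{dense} (since $U$ is dense in each and they cover $W$), so a proper dense open immersion is automatically an isomorphism---this shortcuts your connectedness-via-integrality argument, though your version is also valid.
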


\begin{proof}
  By \cref{lem:common-blp} there are a separated, finite type \(S\)-scheme
  \(Y\), \(U\)-admissible blowups \(\tilde{S}\to S\) and \(\tilde{X} \to X\) and
  dense open immersions \(\tilde{S} \hookrightarrow Y \hookleftarrow \tilde{S}\)
  over \(S\) such that the diagram
  \[
    \begin{tikzcd}
      U \arrow[r, hook] \arrow[d, hook] & \tilde{X} \arrow[d,hook] \\
      \tilde{S} \arrow[r, hook] & Y
    \end{tikzcd}
  \]
  commutes. Since \(\tilde{S}\) is proper over \(S\), the bottom arrow is
  necessarily an isomorphism, in other words \(Y= \tilde{S}\). If \(f\) is
  proper then \(\tilde{X}\) is proper over \(S\), so \(Y = \tilde{X}\) as well.
\end{proof}

\begin{remark}
  If \(\lsp{S}\) is a simple normal crossing pair and \(U \subseteq S\) is
  an open containing all strata, a \(U\)-admissible blowup \(f: X \to S\) need
  not be thrifty, see \cref{ex:adm-not-thrifty}.
\end{remark}

\subsection{The regular-to-regular case}

Using \cref{cor:desc-ss-hdi} we can already obtain a restricted form of
\cref{thm:hdi-log-struct}, the case of a thrifty proper birational morphism of
simple normal crossing pairs. In the proof we will make use of Grothendieck
duality, as formulated in \cite{MR0222093,MR1804902}.

\begin{theorem}[{Grothendieck duality, \cite[Cor. VII.3.4]{MR0222093},
        \cite[Thm. 3.4.4]{MR1804902}}]
  \label{thm:GD}
  Let \( f : X \to Y \) be a proper morphism of finite-dimensional noetherian
  schemes and assume \(Y\) admits a dualizing complex (for example \(X \) and
  \(Y \) could be schemes of finite type over \(k \)). Then for any pair of objects \(
  \mathscr{F}^\bullet \in D_{qc}^-(X)  \) and \(\sG^\bullet \in D_c^+(Y)\)
  there is a natural isomorphism
  \[ Rf_* R \underline{Hom}_X(\mathscr{F}^\bullet, f^{!}\sG^\bullet) \simeq
    R\underline{Hom}_Y(Rf_* \mathscr{F}^\bullet, \sG^\bullet) \text{ in }
    D_c^b(Y) \]
\end{theorem}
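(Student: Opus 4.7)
The plan is to reduce the statement to two basic building blocks---closed immersions and projective spaces---via the structural theorems for proper morphisms, then prove duality in each case separately.

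First, I would establish the global adjunction by constructing a right adjoint $f^!: D_c^+(Y) \to D_c^+(X)$ to $Rf_*$. Two standard routes are available: the explicit residual-complex construction of Hartshorne, or Neeman's Brown representability, which applies because $Rf_*: D_{qc}(X) \to D_{qc}(Y)$ preserves arbitrary direct sums when $f$ is proper and $Y$ is noetherian and finite-dimensional. Either route yields a natural isomorphism $\Hom_{D(Y)}(Rf_*\mathscr{F}^\bullet, \sG^\bullet) \simeq \Hom_{D(X)}(\mathscr{F}^\bullet, f^!\sG^\bullet)$. One then upgrades this to the sheafy $\RsHom$ statement: since the formation of both sides commutes with Zariski restriction to opens $V \subseteq Y$ (using that $f^{-1}V \to V$ remains proper), the sheaf-level identity reduces to the global one over each such $V$, which is handled by the previous step.

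Second, I would verify duality on the two building blocks. By Chow's lemma any proper morphism is dominated by a projective one, which factors as a closed immersion into $\PP^n_Y$ followed by the projection $p: \PP^n_Y \to Y$; the pseudofunctoriality $(gh)^! \simeq h^! g^!$ then reduces the problem to two cases. For a closed immersion $i: Z \hookrightarrow Y$, set $i^!\sG^\bullet := \RsHom_Y(i_*\sO_Z, \sG^\bullet)$ viewed as an $\sO_Z$-complex; duality then follows from tensor-Hom adjunction combined with exactness of $i_*$ on quasi-coherent sheaves. For $p: \PP^n_Y \to Y$ one takes $p^!\sG^\bullet := p^*\sG^\bullet \otimes \omega_{\PP^n_Y/Y}[n]$ with $\omega_{\PP^n_Y/Y} = \sO(-n-1)$, and duality reduces to the explicit \v{C}ech computation of $Rp_*\sO(d)$ together with construction of a trace map $Rp_*\omega_{\PP^n_Y/Y}[n] \to \sO_Y$; the induced adjunction morphism is then checked to be an isomorphism on the generating family $\{\sO(d)\}$ of $D^b_c(\PP^n_Y)$, whence it is an isomorphism everywhere by a dévissage argument.

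The main obstacle, and the reason the classical treatment by Hartshorne fills an entire book, is \emph{coherence}: the $f^!$ built via different factorizations must agree up to canonical and functorial isomorphism, and the attending base-change, projection-formula, and composition natural transformations must all be mutually compatible. Verifying these compatibilities explicitly via residual complexes is combinatorially heavy; this is precisely the motivation for Neeman's Brown representability approach, which constructs $f^!$ as an adjoint from the outset and deduces all the compatibilities formally from uniqueness of adjoints.
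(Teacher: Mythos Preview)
The paper does not prove this theorem at all: it is stated as a citation to \cite[Cor.~VII.3.4]{MR0222093} and \cite[Thm.~3.4.4]{MR1804902} and then used as a black box in the discussion leading up to \cref{lem:ratl-res-alternate}. So there is no ``paper's own proof'' to compare against.

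That said, your outline is a fair high-level summary of the classical route taken in the cited references (factor through Chow's lemma, handle closed immersions via $\RsHom$-adjunction and projective space via the explicit trace, then fight with coherence), and you correctly flag Neeman's representability argument as the modern alternative that sidesteps the compatibility checks. One small quibble: your reduction via Chow's lemma is a bit glib---dominating a proper $f$ by a projective $f'$ does not by itself reduce duality for $f$ to duality for $f'$; one still needs an argument (e.g.\ via dualizing complexes or a noetherian induction) to pass back down. The cited references handle this, but it is one of the genuinely nontrivial steps you have elided.
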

If \(\omega_Y^\bullet\) is a dualizing complex on \(Y\) then \(\omega_X^\bullet
:= f^! \omega_Y^\bullet\) is a dualizing complex on \(X\) \cite[\S V.10, Cor.
  VI.3.5]{MR0222093}, and so in the case  \(\sG = \omega_Y^\bullet\) we obtain a natural isomorphism
\[ Rf_* R \underline{Hom}_X(\mathscr{F}^\bullet, \omega_X^\bullet) \simeq
  R\underline{Hom}_Y(Rf_* \mathscr{F}^\bullet, \omega_Y^\bullet) \text{ in }
  D_c^b(Y) \] When \(X \) is a Cohen-Macaulay \(n\)-dimensional
noetherian scheme (so in particular when \(X\) is a regular \(n\)-dimensional
noetherian scheme), a dualizing complex \(\omega_X^\bullet\) satisfies \(h^i
\omega_X^\bullet  = 0\) for \(i \neq -n\), and the unique non-0 cohomology sheaf
is called the dualizing \emph{sheaf} and denoted \(\omega_X := h^{-n} \omega_X^\bullet \).

\begin{theorem}
  \label{thm:smth-to-smth}
  Let \((Y, \Delta_Y)\) be a simple normal crossing pair and let \(f: X \to Y
  \) be a thrifty proper birational morphism. Assume \((X, \Delta_X)\) is
  also a simple normal crossing pair. Then, the natural map
  \[ \lstrshf{Y} \to Rf_* \lstrshf{X} \text{  is a  quasi-isomorphism,}\]
  and it follows that \(Rf_* \omega_X(\Delta_X) \simeq \omega_Y(\Delta_Y) \).
\end{theorem}

\begin{proof}
  Let \(X_\bullet\) (resp. \(Y_\bullet\)) be the semi-simplicial scheme
  associated to \(\lsp{X}\) (resp. \(\lsp{Y}\)). For any \(J \subseteq
  \{1,\dots, N\}\) \(f\) restricts to a morphism \( \cap_{j \in J} \tilde{D}_j
  \to \cap_{j \in J} D_j\), and in this way we obtain a morphism of
  semi-simplicial schemes
  \begin{equation}
    \label{eq:ind-mp-ssimpsch}
    \begin{tikzcd}
      \cdots \arrow[r] & X_2 \arrow[r] \arrow[d, "f_2"] & X_1 \arrow[r] \arrow[d, "f_1"] & X_0 \arrow[r] \arrow[d, "f_2"] &X \arrow[d, "f"] \\
      \cdots \arrow[r] & Y_2 \arrow[r] & Y_1 \arrow[r] & Y_0 \arrow[r] & Y
    \end{tikzcd}
  \end{equation}
  The hypothesis that both pairs have simple normal crossings and \(f\) is thrifty
  implies that for each \(i\), \(f_i : X_i \to Y_i\) is a proper birational
  morphism of (possibly disconnected) regular schemes over \(k\). By \cite[Cor.
    3.2.10]{MR2923726} (or \cite[Thm. 1.1]{MR3427575}, \cite[Thm. 1.4]{kovacsRationalSingularities2022})
  \begin{equation}
    \label{eq:cr-on-strata}
    \strshf{Y_i} \isom Rf_* \strshf{X_i} \text{ is a quasi-isomorphism for all } i
  \end{equation}
  The diagram \eqref{eq:ind-mp-ssimpsch} induces a morphism of \emph{filtered}
  complexes \(f^\sharp : \underline{\Omega}_{Y, \Delta_Y}^0 \to
  Rf_*\underline{\Omega}_{X, \Delta_X}^0 \), and by \cref{lem:cech-cx-reg-seq-Cdiv} and
  \cref{cor:desc-ss-hdi} it will suffice to show that the resulting map of
  descent spectral sequences
  \[E_1^{ij}(Y) =
    \begin{cases}
      \prod_{\sigma \in
      \dcx{\Delta_Y}^{i-1}}\strshf{D(\sigma)} & j=0                \\
      0                                       & \text{  otherwise}
    \end{cases}
    \to \prod_{\sigma \in \dcx{\Delta_X}^{i-1}}R^j f_* \strshf{D(\sigma)} =
    E_1^{ij}(X) \]
  is an isomorphism, and this last step is a consequence of
  \eqref{eq:cr-on-strata}.

  Finally, once we know \( \lstrshf{Y} \simeq Rf_* \lstrshf{X}\), applying the
  functor \(R\sHom(-, \omega_Y^\bullet)\) we see that
  \begin{equation}
    R\sHom(\lstrshf{Y}, \omega_Y^\bullet) \simeq R\sHom(Rf_* \lstrshf{X}, \omega_Y^\bullet) \simeq Rf_* R\sHom(\lstrshf{X}, \omega_X^\bullet)
  \end{equation}
  where the second quasi-isomorphism comes from \cref{thm:GD}. Since \(\lsp{X}\)
  and \(\lsp{Y}\) are simple normal crossing pairs by hypothesis, in particular
  we know that the divisors \(\Delta_X \) and \(\Delta_Y \) are Cartier, so that
  \begin{equation}
    R\sHom(\lstrshf{Y}, \omega_Y^\bullet) \simeq \omega_Y^\bullet(\Delta_Y) \text{  and  } Rf_* R\sHom(\lstrshf{X}, \omega_X^\bullet) \simeq Rf_* \omega_X^\bullet(\Delta_X).
  \end{equation}
\end{proof}

\section{Constructing semi-simplicial projective Cohen-Macaulayfications}
\label{sec:constructions}
\subsection{Preliminaries}
In the situation of \cref{thm:hdi-log-struct}, if \(Z\) is smooth and
\(\Delta_Z\) is snc, then \cref{thm:smth-to-smth} applied to both \(r\) and
\(s\) shows \[Rf_* \lstrshf{X} \simeq Rf_* Rr_* \lstrshf{Z} =  Rg_* Rs_*
  \lstrshf{Z} \simeq Rg_* \lstrshf{Y}. \] Of course, \(Z\) need not be smooth and
in the absence of resolution of singularities away from characteristic
0,\footnote{At least at the time of this writing ...} we cannot replace it by a
resolution. In characteristic \(p>0\) we could replace \(Z\) with an
alteration, but only at the cost of allowing \(r, s\) to be generically finite
but not necessarily birational, and as such using alterations seems incompatible
with the strategy of \cref{thm:smth-to-smth}. Moreover, to the best of our
knowledge at the level of generality \cref{thm:hdi-log-struct} is stated, even alterations are unavailable.\footnote{Ditto.}

Instead, we will replace \(Z\) with a mildly singular (specifically
Cohen-Macaulay and normal)
semi-simplicial scheme \(Z_\bullet\) together with morphisms \(X_\bullet
\xleftarrow{r_\bullet} Z_\bullet \xrightarrow{s_\bullet} Y_\bullet\) over \(S\)
which are term-by-term proper birational equivalences over \(S\). It is in this
construction that we need \cref{conj:macify}, restated here for convenience:

\begin{conjecture}[{see also \cite[Conj. 1.1]{CesMac}, \cite[Conj.
          1.14]{kovacsRationalSingularities2022}}]
  \label{conj:macify-recall}
  For every CM-quasi-excellent noetherian scheme \(X\) there exists a
  projective birational morphism \(\pi : \tilde{X} \to X\) such that
  \(\tilde{X}\) is Cohen-Macaulay and normal and \(\pi\) is an isomorphism over the
  regular locus \(\mathrm{Reg}(X) \subset X\).
\end{conjecture}
The usefulness of normal Cohen-Macaulayfications for the problem at hand
stems from an extension of the results of Chatzistamatiou-R\"ulling due to
Kov\'acs.
\begin{theorem}[{\cite[Thm. 1.4]{kovacsRationalSingularities2022}}]
  \label{thm:sk-ratlsings}
  Let \(f: X \to Y\) be a locally projective birational morphism of excellent
  Cohen-Macaulay normal schemes. If \(Y\) has pseudo-rational singularities then
  \[\strshf{Y} = Rf_* \strshf{X} \text{  and  } Rf_* \omega_X = \omega_Y.\]
\end{theorem}
By a result of Lipman-Teissier, if \( Y\) is regular (so in particular if it is
smooth over \(k\)) then \(Y\) is pseudo-rational \cite[\S 4]{MR600418}, hence
\cref{thm:sk-ratlsings} applies when \(Y\) is regular.

\subsection{Gluing on simplices}

In this section we describe an inductive method for constructing a
sequence of truncated semi-simplicial schemes converging to \(Z_\bullet\). Here
for any \(i \in \NN\) an \(i\)-\emph{truncated} semi-simplicial object in a
category \(\cC\) is a functor \(\Lambda_{\leq i}^{\textup{op}} \to \cC\), where
\(\Lambda_{\leq i}^{\textup{op}} \) is the full subcategory of
\(\Lambda^{\textup{op}}\) generated by the objects \([j]\) with \(j \leq i\).
Given  an \(i-1\)-truncated semi-simplicial object  \(X_\bullet\) of \(\cC\),
let
\[[i]^2_{<} := \{j, k \in [i] \, | \, j <k\}\] and define two morphisms
\[\delta_{+}, \delta_{-}: X_{i-1}^{[i ]} \to X_{i-2}^{[i]^2_{<}}\] by
\(\delta_{+}(x_0, \dots, x_i) = (d^{i-1}_j(x_k) \, | \, j<k) \) and
\(\delta_{-}(x_0, \dots, x_i) = (d^{i-1}_{k-1}(x_j) \, | \, j<k) \). Assuming \(\cC\) has finite limits we may form the equalizer
\begin{eqnarray}
  \label{eq:coskel-as-equaliz}
  \begin{tikzcd}
    E(X_{\bullet}) := \Eq(\delta_{+}, \delta_{-}) \arrow[r] & X_{i-1}^{[i ]} \arrow[r, shift left, "\delta_{+}"] \arrow[r, shift right, "\delta_{-}"'] & X_{i-2}^{[i]^2_{<}}
  \end{tikzcd}
\end{eqnarray}
one can check that this construction is \emph{functorial} in \(X_\bullet\): indeed if \(Y_\bullet\) is another \(i-1\)-truncated semi-simplicial object then given a morphism \(X_\bullet \to Y_\bullet\) we can form a commutative diagram
\begin{equation}
  \label{eq:func-equaliz}
  \begin{tikzcd}
    E(X_{\bullet}) := \Eq(\delta_{+}, \delta_{-}) \arrow[r] \arrow[d, dashed] & X_{i-1}^{[i ]} \arrow[d] \arrow[r, shift left, "\delta_{+}"] \arrow[r, shift right, "\delta_{-}"'] & X_{i-2}^{[i]^2_{<}} \arrow[d]\\
    E(Y_{\bullet}) := \Eq(\delta_{+}, \delta_{-}) \arrow[r] & Y_{i-1}^{[i ]} \arrow[r, shift left, "\delta_{+}"] \arrow[r, shift right, "\delta_{-}"'] & Y_{i-2}^{[i]^2_{<}}
  \end{tikzcd}
\end{equation}
and obtain a unique morphism on the dashed arrow by functoriality of equalizers. Finally, let \(I\) denote the category \(0 \to 1\) (thought of as the ``unit interval''). An object of \(\cC^I\) is a morphism \(f: X \to Y\) in \(\cC\) and there are 2 functors \(s: \cC^I \to \cC\) defined by \(s(f) = X, t(f) = Y \) (source and target).

\begin{lemma}[{compare with \cite[Vbis, Prop. 5.1.3]{MR0354653},
        \cite[\href{https://stacks.math.columbia.edu/tag/0AMA}{Tag
            0AMA}]{stacks-project}}]
  \label{lem:ssimp-cats-iter-2fib}
  Let \(\cC\) be a category containing finite limits. The functor
  \[ \Phi_i:   \cC^{\Lambda_{\leq i}^{\textup{op}}} \to \cC^{\Lambda_{\leq
      i-1}^{\textup{op}}} \times_{\cC} \cC^I , \] where the right hand side is the 2-fiber product with respect
  to the functors \(E: \cC^{\Lambda_{\leq i-1}^{\textup{op}}} \to \cC \) and
  \(t: \cC^I \to \cC\) that sends an  \(i\)-truncated semi-simplicial object
  \(X_\bullet\) to the pair \((\mathrm{sk}_{i-1}X_\bullet, X_i \to
  E(\mathrm{sk}_{i-1}X))\),  is an equivalence of categories.
\end{lemma}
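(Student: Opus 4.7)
The plan is to construct an explicit quasi-inverse $\Psi_i$ to $\Phi_i$ and verify the two compositions are identities. Unpacking definitions, an object of the right-hand category is a triple $(Y_\bullet, W, f)$ consisting of an $(i-1)$-truncated semi-simplicial object $Y_\bullet$, an object $W \in \cC$, and a morphism $f \colon W \to E(Y_\bullet)$. Given such a triple, I would define $\Psi_i(Y_\bullet, W, f)$ to be the $i$-truncated semi-simplicial object $X_\bullet$ with $\mathrm{sk}_{i-1}X_\bullet = Y_\bullet$, $X_i = W$, and face maps
\[ d^i_k \colon X_i = W \xrightarrow{f} E(Y_\bullet) \hookrightarrow Y_{i-1}^{[i]} \xrightarrow{\pi_k} Y_{i-1} \quad (0 \leq k \leq i), \]
where $\pi_k$ is projection onto the $k$-th factor.

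The main content of the proof is to verify that $\Psi_i(Y_\bullet, W, f)$ is actually a semi-simplicial object, i.e., that the relations $d^{i-1}_j \circ d^i_k = d^{i-1}_{k-1} \circ d^i_j$ hold for all $j < k$ (the analogous relations among the $d^{i-1}_\bullet$'s and below are inherited from $Y_\bullet$). This is where the equalizer condition is designed to intervene: by construction $\delta_+$ applied to the tuple $(d^i_0, \dots, d^i_i)$ produces the family $(d^{i-1}_j \circ d^i_k)_{j<k}$, while $\delta_-$ produces $(d^{i-1}_{k-1} \circ d^i_j)_{j<k}$ — the shift $k \mapsto k-1$ in $\delta_-$ is precisely the bookkeeping needed because the first face map has already skipped the $j$-th index. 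The hypothesis that $f$ factors through $E(Y_\bullet) = \Eq(\delta_+, \delta_-)$ then says exactly that these two families agree, which is the desired simplicial identity at level $i$. Functoriality of $\Psi_i$ on morphisms follows from the universal property of the equalizer together with the commutativity of diagram \eqref{eq:func-equaliz}.

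To finish, I would check that $\Phi_i \circ \Psi_i = \mathrm{id}$ and $\Psi_i \circ \Phi_i = \mathrm{id}$. The first is essentially tautological: applying $\Phi_i$ to $\Psi_i(Y_\bullet, W, f)$ returns $\mathrm{sk}_{i-1}\Psi_i(Y_\bullet, W, f) = Y_\bullet$ by construction, together with the unique map $W \to E(Y_\bullet)$ whose composition with each projection $\pi_k$ is $d^i_k = \pi_k \circ f$; by the universal property of the equalizer, this map coincides with $f$. For the other direction, starting from an $i$-truncated semi-simplicial object $X_\bullet$, the simplicial identities imply that the tuple $(d^i_0, \dots, d^i_i) \colon X_i \to X_{i-1}^{[i]}$ equalizes $\delta_+$ and $\delta_-$, and hence factors uniquely through a map $\tilde f \colon X_i \to E(\mathrm{sk}_{i-1}X_\bullet)$; then $\Psi_i(\mathrm{sk}_{i-1}X_\bullet, X_i, \tilde f)$ recovers $X_\bullet$ because its face maps $\pi_k \circ \tilde f$ by construction agree with the original $d^i_k$.

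The hardest step is really just the bookkeeping in the simplicial-identity verification — specifically, matching the two descriptions $\delta_+(x_0,\dots,x_i) = (d^{i-1}_j(x_k))_{j<k}$ and $\delta_-(x_0,\dots,x_i) = (d^{i-1}_{k-1}(x_j))_{j<k}$ to the identities $d^{i-1}_j d^i_k = d^{i-1}_{k-1}d^i_j$ of \eqref{eq:ssimp-rel} with the correct indexing conventions. Once that alignment is fixed, everything else is formal manipulation of limits and the universal property of the equalizer.
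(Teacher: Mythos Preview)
Your proposal is correct and follows essentially the same approach as the paper: both arguments build the inverse object from the data $(Y_\bullet, W, f)$ by setting the new top object to $W$ and defining the top face maps via the projections $\pi_k \circ f$, with the equalizer condition supplying exactly the semi-simplicial identity $d^{i-1}_j d^i_k = d^{i-1}_{k-1} d^i_j$. The only stylistic difference is that the paper packages this as ``fully faithful $+$ essentially surjective'' and, in the essential surjectivity step, explicitly defines $Z(\varphi)$ for an arbitrary increasing map $\varphi\colon [j]\to[i]$ (by factoring through some $\delta^i_k$) and checks independence of the factorization, whereas you invoke the equivalence between functors $\Lambda_{\leq i}^{\mathrm{op}}\to\cC$ and face-map data satisfying \eqref{eq:ssimp-rel} that the paper stated earlier; this is the same content.
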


\begin{proof}
  We first check that \(\Phi_i\) is fully faithful. For faithfulness, note that
  for any 2 \(i\)-truncated semi-simplicial objects \(X_\bullet, Y_\bullet\)
  there is an \emph{injection}
  \begin{equation}
    \label{eq:ssimp-homs-vs-product}
    \Hom_{\cC^{\Lambda_{\leq i}^{\textup{op}}}}(X_\bullet, Y_\bullet)  \inj
    \prod_{j=0}^i \Hom_{\cC}(X_j, Y_j)
  \end{equation}
  since a morphism \(  \alpha: X_\bullet
  \to Y_\bullet\) is equivalent to a sequence of morphisms \(\alpha_i: X_i \to
  Y_i\) commuting with differentials. By the definition of the 2-fiber
  product, the morphism \(\Phi_i(\alpha):\Phi_i(X_\bullet) \to \Phi_i(Y_\bullet) \) induced by \( \alpha \) consists of the morphism
  \(\mathrm{sk}_{i-1}\alpha: \mathrm{sk}_{i-1}X_\bullet \to
  \mathrm{sk}_{i-1}Y_\bullet\), and the commutative diagram
  \[
    \begin{tikzcd}
      X_i \arrow[d, "\alpha_i"] \arrow[r] & E(\mathrm{sk}_{i-1}X) \arrow[d, "E(\alpha)"] \\
      Y_i \arrow[r] & E(\mathrm{sk}_{i-1}Y)
    \end{tikzcd}
  \] This shows that \eqref{eq:ssimp-homs-vs-product} \emph{factors} as
  \begin{equation}
    \Hom_{\cC^{\Lambda_{\leq i}^{\textup{op}}}}(X_\bullet, Y_\bullet)  \xrightarrow{\Phi_i} \Hom_{\cC^{\Lambda_{\leq
      i-1}^{\textup{op}}} \times_{\cC} \cC^I}\big(\Phi_i(X_\bullet),  \Phi_i(Y_\bullet)\big)\to
    \prod_{j=0}^i \Hom_{\cC}(X_j, Y_j)
  \end{equation} hence the first map is injective, or in other words \( \Phi_i \) is faithful. On the other hand given an arbitrary morphism \( \Phi_i(X_\bullet) \to \Phi_i(X_\bullet) \) consisting of a map \(\beta: \mathrm{sk}_{i-1}X_\bullet \to \mathrm{sk}_{i-1}Y_\bullet\), a map \( \gamma: X_i \to Y_i \) and a commutative diagram
  \begin{equation}
    \label{eq:2cat-comm}
    \begin{tikzcd}
      X_i \arrow[d, "\gamma"] \arrow[r] & E(\mathrm{sk}_{i-1}X) \arrow[d, "E(\beta)"] \\
      Y_i \arrow[r] & E(\mathrm{sk}_{i-1}Y)
    \end{tikzcd}
  \end{equation}
  we may verify commutativity of
  \[
    \begin{tikzcd}
      X_i  \arrow[dr, phantom, "(1)"] \arrow[rr, bend left, "d_k^i"]\arrow[r] \arrow[d, "\gamma"] & E(\mathrm{sk}_{i-1}X) \arrow[dr, phantom, "(2)"] \arrow[r, "\mathrm{pr}_k"] \arrow[d, "E(\beta)"] &X_{i-1} \arrow[d, "\beta_{i-1}"]\\
      Y_i  \arrow[rr, bend right, "d_k^i"']\arrow[r] & E(\mathrm{sk}_{i-1}Y)\arrow[r, "\mathrm{pr}_k"] & Y_{i-1}
    \end{tikzcd}
  \] as follows: commutativity of (1) is exactly \eqref{eq:2cat-comm}, and commutativity of (2) can be deduced from that of the left square of \eqref{eq:func-equaliz}. Hence \(  \beta \) and \( \gamma \) define a map \(X_\bullet \to Y_\bullet \) and so \(\Phi_i\) is full.

  Next we show  \(\Phi_i\) is essentially surjective --- this argument is
  inspired by  and closely follows the proof of
  \cite[\href{https://stacks.math.columbia.edu/tag/0186}{Tag
      0186}]{stacks-project}. For this we consider an object of the 2-fiber product
  \( \cC^{\Lambda_{\leq i-1}^{\textup{op}}} \times_{\cC} \cC^I  \) consisting of
  an \(i-1 \)-truncated semi-simplicial object \(X_\bullet\), and object \(Y\)
  and a morphism \(  f: Y \to E(X_{\bullet}) \), and we must prove that there exists
  an \(i\)-truncated semi-simplicial object \(Z_\bullet \) and an isomorphism
  \(\Phi_i(Z_\bullet) \simeq (X_\bullet, f)\). We first let \(  Z_j = X_j \) for
  \( j < i \) and let \(Z(\varphi) = X(\varphi)\) for any \(\varphi: [j'] \to
  [j]\) with \( j' <j < i\). Then we set \(Z_i = Y\), and we must define
  morphisms \(Z(\varphi): Z_i = Y \to X_j = Z_j\) for increasing maps \([j] \to
  [i]\) which are functorial in \(\varphi\), in the sense that for any
  increasing \(\psi: [j'] \to [j]\) the diagram
  \begin{equation}
    \label{eq:func-gluing-on-cells}
    \begin{tikzcd}
      Y \arrow[rr, bend right, "Z(\varphi \circ \psi)"'] \arrow[r, "Z(\varphi)"] & X_j \arrow[r, "X(\psi)"] & X_{j'}
    \end{tikzcd}
  \end{equation}
  commutes (note that the data of \(X(\psi)\) is already included in \(X_\bullet\)).
  We may assume \(j < i\) (otherwise \(\varphi = \mathrm{id}\) and we must set
  \(Z(\varphi) = \mathrm{id}\)), and so \(\varphi\) must factor as
  \[ [j] \xrightarrow{\psi} [i-1] \xrightarrow{\delta^i_k} [i]\] for some \(k\)
  and some \(\psi\). We define \(Z(\varphi) \) to be the composition
  \[ Y \xrightarrow{f} E(X_{\bullet})  \to X_{i-1}^{[i ]} \xrightarrow{\mathrm{pr}_k}
    X_{i-1} \xrightarrow{X(\psi)} X_{j} \] (so in particular we define
  \(Z(\delta^i_k) = \mathrm{pr}_k \circ f =: f_k\)). To verify this
  definition is independent of \(\psi\), suppose that there is another
  factorization
  \[ [j] \xrightarrow{\psi'} [i-1] \xrightarrow{\delta^i_l} [i]\] Note that if
  \(j = i-1\) then \(\psi = \psi' = \mathrm{id}\) and \(k=l\) for trivial
  reasons, so we may assume \(j < i-1\) and in that case \(\varphi\) misses
  \emph{both} \(k\) and \(l\), so we may factor through \([i-2]\) as follows:
  \begin{equation}
    \begin{tikzcd}
      &&  \lbrack i-1\rbrack \arrow[dr, "\delta_k^i"] & \\
      \lbrack j\rbrack \arrow[urr, bend left, "\psi"] \arrow[r, "\rho"] \arrow[drr, bend right, "\psi'"'] & \lbrack i-2\rbrack \arrow[ur, "\delta_{l-1}^{i-1}"] \arrow[dr, "\delta_k^{i-1}"'] && \lbrack i\rbrack \\
      && \lbrack i-1\rbrack \arrow[ur, "\delta_l^i"']
    \end{tikzcd}
  \end{equation}
  By the defining property of the equalizer \(E(X_{\bullet})\), we know \(X(\delta_{j-1}^{i-1})
  \circ f_k = X(\delta_{k}^{i-1}) \circ f_l\), and \[ X(\rho) \circ
    X(\delta_{j-1}^{i-1}) = X(\psi) \text{ and } X(\rho) \circ X(\delta_{k}^{i-1})
    = X(\psi')\] because \(X_\bullet \) is an \(i-1\)-truncated semi-simplicial
  object. It follows that \(X(\psi) \circ f_k = X(\psi') \circ f_l \) as
  desired.

  We now prove to prove the commutativity statement in
  \eqref{eq:func-gluing-on-cells}. Again we may assume \(j < i\), since
  otherwise \(\varphi = \mathrm{id}\) and \(\psi = \varphi \circ \psi\) so
  commutativity is implied by the above proof that the \(Z(\varphi )\)
  are well defined. When \(j < k\) the map \(\varphi\), and hence also \(\varphi
  \circ \psi \) must factor through some \( \delta_k^i: [i-1] \to [i] \) and we
  obtain the following situation:
  \[
    \begin{tikzcd}
      \lbrack j\rbrack \arrow[r, "\psi"] \arrow[rrr, bend right, "\varphi \circ \psi"] & \lbrack j\rbrack \arrow[r, "\rho"] \arrow[rr, bend left, "\varphi" ] & \lbrack i-1\rbrack \arrow[r, "\delta^i_k"] & \lbrack i\rbrack
    \end{tikzcd}
  \] Now \emph{by definition} \(Z(\varphi) = X(\rho) \circ f_k\) and \(Z(\varphi \circ \psi) = X(\rho \circ \psi) \circ f_k\), and since \(X_\bullet\) is an \(i-1 \)-truncated semi-simplicial object \(X(\rho \circ \psi) =X(\psi) \circ X(\rho) \), so that
  \[ X(\psi) \circ Z(\varphi) = X(\psi) \circ X(\rho) \circ f_k = X(\rho
    \circ \psi) \circ f_k = Z(\varphi \circ \psi)\] as claimed.

\end{proof}

\subsection{Common admissible blowups}

Using \cref{lem:ssimp-cats-iter-2fib} to build the semi-simplicial scheme
\(Z_\bullet\) inductively, at each step we encounter the situation of the lemma below.

\begin{lemma}
  \label{lem:u-adm-resolving-indet}
  Suppose
  \[
    \begin{tikzcd}
      X \arrow[d, "\varphi"] & U \arrow[l, "\imath"] \arrow[r, "\jmath"] \arrow[d, "\rho^0"] & Y \arrow[d, "\psi"] \\
      F & E \arrow[l, "f"] \arrow[r, "g"] & G
    \end{tikzcd}
  \]is a commutative diagram of schemes of finite type over a quasi-compact quasi-separated base scheme \(S\), and assume that \(f, g, \varphi\) and \(\psi\) are proper and \(\imath \) and \( \jmath \) are dense open immersions. Then, there is a commutative diagram
  \[
    \begin{tikzcd}
      X \arrow[d, "\varphi"] & Z \arrow[l, "r"] \arrow[r, "s"] \arrow[d, "\rho"] & Y \arrow[d, "\psi"] \\
      F & E \arrow[l, "f"] \arrow[r, "g"] & G
    \end{tikzcd}
  \] where \( r\) and \(s\) are \(U\)-admissible blowups (hence in particular \emph{projective}).

  If in addition \(S\) is a CM-quasi-excellent noetherian scheme and \(U\) is
  regular, then assuming \cref{conj:macify} we may ensure that \(Z\) is Cohen-Macaulay and normal.
\end{lemma}

\begin{proof}
  First, \(X\) and \(E\) are proper over the scheme \(F\), which is
  quasi-compact and quasi-separated since it is of finite type over \(S\). By
  the first part of \cref{lem:common-blp} applied to the map of \(F\)-schemes
  \(\rho^0: U \to E \) defined on the dense open \(U \subseteq  X\), there
  is a \(U\)-admissible blowup \(V_X \to X\) and an \(F\)-morphism \(V_X \to
  E\) extending \(\rho^0\). A similar argument produces a \(U\)-admissible
  blowup \(V_Y \to Y\) and a \(G\)-morphism \(V_Y \to E\) extending \(\rho^0\).
  The current situation is summarized below:
  \[
    \begin{tikzcd}[column sep=large]
      U \arrow[ddr, "\imath"]  \arrow[r] \arrow[drr] & V_Y  \arrow[dd] \arrow[rr] && E \arrow[dr, equals] \arrow[dd, "g", near end] & \\
      &  & V_X   \arrow[rr, crossing over] && E \arrow[dd, "f"] \\
      & Y \arrow[rr, near end, "\psi"] && G & \\
      && X \arrow[from=uuull, crossing over, bend right, "\jmath"'] \arrow[from=uu, crossing over, near end] \arrow[rr, "\varphi"] && F
    \end{tikzcd}
  \]
  Since \(V_X, V_Y\) are \(U\)-admissible blowups of \(X, Y \) respectively,
  they still contain \(U\) as a \emph{dense} open (\cite[comments before Lem. 1.1]{MR2356346}). Note that since \(V_X \to
  X\) is a blowup, \(\varphi\) is proper and \(f\) is proper the morphism \(V_X
  \to E\) is also proper; similarly \(V_Y\) is proper over \(E\). Now applying
  the second part of \cref{lem:common-blp} to \(V_X\) and \(V_Y\) \emph{over}
  \(E\) we obtain a separated finite type morphism \(\rho: Z \to E\), \(U\)
  admissible blowups \(\tilde{V}_X \to V_X\)  and \(\tilde{V}_Y \to V_Y\) and
  open immersions \(\tilde{V}_X \inj Z \hookleftarrow \tilde{V}_Y\) over \(E\)
  such that the diagram
  \[
    \begin{tikzcd}
      U \arrow[r] \arrow[d] & \tilde{V}_Y \arrow[d] \\
      \tilde{V}_X \arrow[r] & Z
    \end{tikzcd}
  \] commutes and \(E = \tilde{V}_X \cup \tilde{V}_Y\). Since \(U\) is dense in both \(\tilde{V}_X \) and \(\tilde{V}_Y\), we see that \(\tilde{V}_X \) and \(\tilde{V}_Y\) are both dense in \(Z\). Then as \(\tilde{V}_X \to Z\) is a dense open immersion of separated finite type \(E\)-schemes where \(\tilde{V}_X\) is \emph{proper} over \(E\), it must be that  \(\tilde{V}_X = Z\); similarly, \(\tilde{V}_Y = Z\) (see also the comments following \cite[Cor. 2.10]{MR2356346}). Finally, we define \(r\) and \(s\) to be the compositions
  \[
    \begin{tikzcd}
      Z \arrow[r, equals] \arrow[rrr, bend left, "r"]&\tilde{V}_X  \arrow[r] &V_X \arrow[r] &X
    \end{tikzcd}  \text{  and  }
    \begin{tikzcd}
      Z \arrow[r, equals] \arrow[rrr, bend left, "s"]&\tilde{V}_Y  \arrow[r] &V_Y \arrow[r] &Y
    \end{tikzcd}
  \]
  Finally if \(S\) is CM-quasi-excellent, then since \(Z\) is of finite type
  over \(S\) it is also CM-quasi-excellent by
  \cite[Rmk.1.5]{CesMac}. By
  hypothesis \(U \subseteq \CM(Z)\), and by \cref{conj:macify} there is a
  \(\mathrm{Reg}(Z)\)-admissible (hence also \(U\)-admissible) blowup \(\tilde{Z} \to Z\)
  such that \(\tilde{Z}\) is Cohen-Macaulay and normal. In this case we replace \(Z\) with \(\tilde{Z}\).
\end{proof}

\begin{lemma}
  \label{lem:ssimp-common-blp}
  Let \(S\) be a quasi-compact quasi-separated base scheme and let
  \begin{equation}
    \label{eq:ssimp-common-blp1}
    \begin{tikzcd}
      X_\bullet \arrow[d] & U_\bullet \arrow[l, "\imath_\bullet"] \arrow[r, "\jmath_\bullet"] \arrow[d] & Y_\bullet \arrow[d] \\
      X_{-1} & U_{-1} \arrow[l, "\imath_{-1}"] \arrow[r, "\jmath_{-1}"] & Y_{-1}
    \end{tikzcd}
  \end{equation}
  be  morphisms of augmented semi-simplicial schemes of finite type over \(S\).
  Assume that all differentials and augmentations of \(X_\bullet\) and
  \(Y_\bullet\) are proper,\footnote{This is equivalent to requiring that
    \(X_\bullet\) is a semi-semi-simplicial object in the category of proper
    \(X_{-1}\)-schemes (and similarly for \(Y_\bullet\)).} and that the morphisms \(X_i \xleftarrow[]{\imath_i}
  U_i \xrightarrow{\jmath_i} Y_i\) are dense open immersions for all \(i\)
  (including \(i = -1\)). If there exists a finite-type \(S\)-scheme \(Z_{-1}\)
  and \(U_{-1}\)-admissible blowups \(X_{-1} \xleftarrow[]{r_{-1}}
  Z_{-1} \xrightarrow{s_{-1}} Y_{-1}\), then there exists an augmented
  semi-simplicial \(S\)-scheme \(Z_\bullet \to Z_{-1}\) together with morphisms
  \begin{equation}
    \label{eq:ssimp-common-blp2}
    \begin{tikzcd}
      X_\bullet \arrow[d] & Z_\bullet \arrow[l, "r_\bullet"] \arrow[r, "s_\bullet"] \arrow[d] & Y_\bullet \arrow[d] \\
      X_{-1} & Z_{-1} \arrow[l, "r_{-1}"] \arrow[r, "s_{-1}"] & Y_{-1}
    \end{tikzcd}
  \end{equation}
  such that for all \(i\) the morphisms \(X_i \xleftarrow[]{r_i}
  Z_i \xrightarrow{s_i} Y_i\) are \(U_{i}\)-admissible blowups (hence in
  particular projective and birational).

  Moreover if \(S\) is a CM-quasi-excellent noetherian scheme, each \(U_i\)
  is regular and \cref{conj:macify} holds, then we may ensure that the \(Z_i\)
  are also Cohen-Macaulay and normal.
\end{lemma}

\begin{proof}
  We construct a sequence of \(i\)-truncated semi-simplicial \(S\)-schemes
  \(\tilde{Z}_{i \bullet}\) converging to \(Z_\bullet\), with the additional
  requirement that the morphisms \(\skel_{i-1} (U_\bullet) \to \skel_{i-1}
  (X_\bullet)\) and \(\skel_{i-1} (U_\bullet) \to \skel_{i-1} (Y_\bullet)\)
  factor through \(\tilde{Z}_{i \bullet}\).\footnote{I \emph{think} that this
    isn't actually an additional restriction, but including it makes the inductive
    step easier.} The \(i=-1\) case is
  included in the hypotheses. At the inductive step we may assume that there is
  an \(i-1\)-truncated semi-simplicial \(S\)-scheme \(\tilde{Z}_{i-1 \bullet}\)
  together with a commutative diagram
  \begin{equation}
    \begin{tikzcd}
      & \skel_{i-1} (U_\bullet) \arrow[dl, "\skel_{i-1}(\imath_\bullet)"'], \arrow[dr, "\skel_{i-1}(\jmath_\bullet)"] \arrow[d, "k_\bullet"] & \\
      \skel_{i-1} (X_\bullet) \arrow[d] & \tilde{Z}_{i-1 \bullet} \arrow[l, "\tilde{r}_{i-1 \bullet}"] \arrow[r, "\tilde{s}_{i-1 \bullet}"] \arrow[d] &\skel_{i-1} (Y_\bullet) \arrow[d] \\
      X_{-1} & Z_{-1} \arrow[l, "r_{-1}"] \arrow[r, "s_{-1}"] & Y_{-1}
    \end{tikzcd}
  \end{equation}
  such that for all \(j < i\) the morphisms \(X_j \xleftarrow[]{\tilde{r}_{i-1,
      j}} \tilde{Z}_{i-1, j} \xrightarrow{\tilde{s}_{i-1, j}} Y_j\) are
  \(U_{j}\)-admissible blowups.
  Letting \(E\) denote the equalizer functor of \cref{lem:ssimp-cats-iter-2fib}, we obtain a commutative diagram of the form
  \begin{equation}
    \label{eq:mps-to-equalizers}
    \begin{tikzcd}
      X_i \arrow[dd, "(X(\delta_k^i))"] & U_i \arrow[l, "\imath_i"] \arrow[r, "\jmath_i"] \arrow[d, "(U(\delta_k^i))"] & Y_i \arrow[dd, "(Y(\delta_k^i))"] \\
      & E(\skel_{i-1} (U_\bullet)) \arrow[d, "E(k_\bullet)"] & \\
      E(\skel_{i-1} (X_\bullet)) & E(\tilde{Z}_{i-1 \bullet}) \arrow[l, "E(\tilde{r}_{i-1 \bullet})"] \arrow[r, "E(\tilde{s}_{i-1 \bullet})"]  &E(\skel_{i-1} (Y_\bullet) )
    \end{tikzcd}
  \end{equation}
  Next, we verify that \eqref{eq:mps-to-equalizers} satisfies the hypotheses of
  \cref{lem:u-adm-resolving-indet}, making repeated reference to the
  constructions in \eqref{eq:coskel-as-equaliz} and \eqref{eq:func-equaliz}.
  Note that the bottom horizontal arrows are proper, since they are obtained as
  limits of the blowup maps \(\tilde{r}_{i-1, j}: \tilde{Z}_{i-1, j} \to X_j\)
  and \(\tilde{s}_{i-1, j}: \tilde{Z}_{i-1, j} \to Y_j\) for \(j=i-1, i-2\). The
  vertical maps on the outside edges are proper since the differentials
  \(X(\delta_k^i): X_i \to X_{i-1}\) and \(Y(\delta_k^i): Y_i \to Y_{i-1}\)
  are proper by hypothesis. Hence applying \cref{lem:u-adm-resolving-indet} we
  obtain a commutative diagram
  \begin{equation}
    \begin{tikzcd}
      U_i \arrow[d, "(U(\delta_k^i))"] \arrow[r, "\imath_i"] \arrow[rr, bend left] \arrow[rrr, bend left, "\jmath_i"] & X_i \arrow[d, "(X(\delta_k^i))"] & Z_i \arrow[l, "\tilde{r}_{i-1, i}"] \arrow[r, "\tilde{s}_{i-1 , i}"] \arrow[d, "\rho"] & Y_i \arrow[d, "(Y(\delta_k^i))"] \\
      E(\skel_{i-1} (U_\bullet)) \arrow[r] \arrow[rr, bend right, "E(k_\bullet)"] \arrow[rrr, bend right] & E(\skel_{i-1} (X_\bullet)) & E(\tilde{Z}_{i-1 \bullet}) \arrow[l, "E(\tilde{r}_{i-1 \bullet})"] \arrow[r, "E(\tilde{s}_{i-1 \bullet})"]  &E(\skel_{i-1} (Y_\bullet) )
    \end{tikzcd}
  \end{equation}
  in which the maps  \(\tilde{r}_{i-1, i}:Z_i \to X_i\) and
  \(\tilde{s}_{i-1, i}:Z_i \to Y_i\) are \(U_i\)-admissible
  blowups. In the case where \(S\) is CM-quasi-excellent, \(U_i \) is regular
  and \cref{conj:macify} holds, we apply
  \cref{lem:u-adm-resolving-indet} to ensure that \(Z_i \) is
  Cohen-Macaulay.

  Now \cref{lem:ssimp-cats-iter-2fib} implies that there is an \(i\)-truncated
  semi-simplicial \(S\)-scheme \(\tilde{Z}_{i\bullet}\) such that
  \(\skel_{i-1}(\tilde{Z}_{i\bullet}) =  \tilde{Z}_{i-1 \bullet}\) and
  \(\tilde{Z}_{i, i} = Z_i\), together with a commutative diagram
  \begin{equation}
    \begin{tikzcd}
      & \skel_{i} (U_\bullet) \arrow[dl, "\skel_{i}(\imath_\bullet)"'], \arrow[dr, "\skel_{i}(\jmath_\bullet)"] \arrow[d, "k_\bullet"] & \\
      \skel_{i} (X_\bullet) \arrow[d] & \tilde{Z}_{i \bullet} \arrow[l, "\tilde{r}_{i \bullet}"] \arrow[r, "\tilde{s}_{i \bullet}"] \arrow[d] &\skel_{i} (Y_\bullet) \arrow[d] \\
      X_{-1} & Z_{-1} \arrow[l, "r_{-1}"] \arrow[r, "s_{-1}"] & Y_{-1}
    \end{tikzcd}
  \end{equation}
  such that for all \(j \leq i\) the morphisms \(X_j
  \xleftarrow[]{\tilde{r}_{i-1, j}} \tilde{Z}_{i-1, j}
  \xrightarrow{\tilde{s}_{i-1, j}} Y_j\) are \(U_{j}\)-admissible blowups.
\end{proof}

\begin{corollary}
  \label{cor:meta-cor}
  With the hypotheses of \cref{lem:ssimp-common-blp}, if in addition the base
  scheme \(S\) is CM-quasi-excellent and noetherian, and all the
  schemes \(X_i\) and \(Y_i\) have pseudorational singularities, all the schemes
  \(U_i \) are regular and \cref{conj:macify} holds, then:
  \begin{enumerate}[(i)]
    \item \label{item:meta-cor1} There is a complex \(\sK\) in the derived
          category \( D_c^b(Z_{-1}) \) together with
          quasi-isomorphisms
          \begin{equation}
            \label{eq:meta-cor1}
            \cone(\sO_{X_{-1}} \to R \epsilon^X_* \sO_{X_\bullet})[-1] \simeq  Rr_{-1*} \sK \text{  and  } Rs_{-1*} \sK \simeq \cone(\sO_{Y_{-1}} \to R \epsilon^Y_* \sO_{Y_\bullet})[-1].
          \end{equation}
    \item \label{item:meta-cor2} There is a complex \(\mathscr{L}\) in the
          derived category \( D_c^b(Z_{-1}) \) together with
          quasi-isomorphisms
          \begin{equation}
            \begin{split}
              &R\sHom (\cone(\sO_{X_{-1}} \to R \epsilon^X_* \sO_{X_\bullet})[-1], \omega_{X_{-1}}^\bullet) \simeq  Rr_{-1*} \sL \text{  and  }\\
              &Rs_{-1*} \mathscr{L} \simeq R\sHom (\cone(\sO_{Y_{-1}} \to R \epsilon^Y_* \sO_{Y_\bullet})[-1], \omega_{Y_{-1}}^\bullet).
            \end{split}
          \end{equation}
  \end{enumerate}
  In the special case where \(X_\bullet\) and \(Y_\bullet\) are the
  semi-simplicial schemes associated to simple normal crossing pairs \(\lsp{X}\)
  and \(\lsp{Y}\) respectively (so that \(X_{-1} = X\) and \(Y_{-1}=Y\)), then there are quasi-isomorphisms
  \begin{equation}
    \label{eq:meta-cor-special}
    \begin{split}
      &\lstrshf{X} \simeq  Rr_{-1*} \sK, \, \, Rs_{-1*} \sK \simeq \lstrshf{Y}, \\
      &\omega_{X}(\Delta_X) \simeq  Rr_{-1*} \sL \text{  and  }
      Rs_{-1*} \mathscr{L} \simeq \omega_{Y}(\Delta_Y)
    \end{split}
  \end{equation}
\end{corollary}

\begin{proof}
  By the ``moreover'' part of \cref{lem:ssimp-common-blp} we may
  ensure \(Z_i\) is Cohen-Macaulay for all \(i\).

  By functoriality, \cref{eq:ssimp-common-blp2} induces commutative diagrams of
  complexes
  \begin{equation}
    \begin{tikzcd}
      \cone(\sO_{X_{-1}} \to R \epsilon^X_* \sO_{X_\bullet})[-1] \arrow[r] \arrow[d, "\alpha^X"]& \strshf{X_{-1}} \arrow[r] \arrow[d, "\beta^X"] &R\epsilon^X_{*} \strshf{X_\bullet} \arrow[r, "+1"] \arrow[d, "\gamma^X"]& \cdots\\
      Rr_{-1*}\cone(\sO_{Z_{-1}} \to R \epsilon^Z_* \sO_{Z_\bullet})[-1] \arrow[r] & Rr_{-1*} \strshf{Z_{-1}} \arrow[r] & Rr_{-1*} R\epsilon^Z_{*} \strshf{Z_\bullet} \arrow[r, "+1"] &\cdots
    \end{tikzcd}
  \end{equation}
  on \(X_{-1} \) and an entirely analogous diagram
  \begin{equation}
    \begin{tikzcd}
      \cone(\sO_{Y_{-1}} \to R \epsilon^Y_* \sO_{Y_\bullet})[-1] \arrow[r] \arrow[d, "\alpha^Y"]& \strshf{Y_{-1}} \arrow[r] \arrow[d, "\beta^Y"] &R\epsilon^Y_{*} \strshf{Y_\bullet} \arrow[r, "+1"] \arrow[d, "\gamma^Y"]& \cdots\\
      Rs_{-1*}\cone(\sO_{Z_{-1}} \to R \epsilon^Z_* \sO_{Z_\bullet})[-1] \arrow[r] & Rs_{-1*} \strshf{Z_{-1}} \arrow[r] & Rs_{-1*} R\epsilon^Z_{*} \strshf{Z_\bullet} \arrow[r, "+1"] &\cdots
    \end{tikzcd}
  \end{equation}
  on \(Y_{-1}\). This motivates the choice:
  \begin{equation}
    \sK := \cone(\sO_{Z_{-1}} \to R \epsilon^Z_* \sO_{Z_\bullet})[-1],
  \end{equation}
  and one can show that \(\sK \) has bounded, coherent cohomology using \cref{cor:bdd-coh}.
  To complete the proof of \cref{item:meta-cor1} it will suffice to show that
  \(\alpha^X\) and \(\alpha^Y\) are quasi-isomorphisms --- we will deal with the
  case of \(\alpha^X\), as that of \(\alpha^Y\) is almost identical.

  As \(X_{-1}\) has rational singularities by hypothesis and by
  \cref{lem:ssimp-common-blp} \(Z_{-1}\) is Cohen-Macaulay and the map \(r_{-1}:
  Z_{-1} \to X_{-1}\) is projective and birational, \cref{thm:sk-ratlsings}
  implies that \(\beta^X\) is a quasi-isomorphism. By commutativity of
  \cref{eq:ssimp-common-blp2}, \(\gamma^X\) can be identified with the morphism
  \begin{equation}
    \label{eq:about-to-use-desc-ss}
    R\epsilon^Y_{*} \strshf{Y_\bullet}  \to R\epsilon^Z_{*} Rs_{\bullet *}  \strshf{Z_\bullet}
  \end{equation}
  and by \cref{lem:desc-ss} the maps on cohomology induced by
  \cref{eq:about-to-use-desc-ss} are the abutment of a map of descent spectral
  sequences; the map of \(E_1\) pages reads
  \begin{equation}
    \label{eq:map-of-some-E1s}
    E_1^{ij}(X_\bullet) = R^j \epsilon^X_{i*} \strshf{X_i} \to R^j \epsilon^X_{i*} Rr_{i*} \strshf{Z_i} = E_1^{ij}(Z_\bullet).
  \end{equation}
  By \cref{thm:sk-ratlsings} again, for each \(i\) the natural map
  \(\strshf{X_i} \to Rr_{i*} \strshf{Z_i}\) is a quasi-isomorphism and from this
  it follows that \cref{eq:map-of-some-E1s} is an isomorphism of \(E_1\) pages;
  this implies \cref{eq:about-to-use-desc-ss} is a quasi-isomorphism. At this
  point we have shown \(\beta^X\) and \(\gamma^X\) are quasi-isomorphisms, and
  by the 5-lemma we conclude that so is \(\alpha^X\).

  For \cref{item:meta-cor2}, we essentially apply the functor \( R\sHom(-,
  \omega_{X_{-1}}^\bullet) \) (resp. \( R\sHom(-,
  \omega_{X_{-1}}^\bullet) \)) to the first (resp. second) quasi-isomorphism of
  \cref{eq:meta-cor1}, and then use Grothendieck duality (\cref{thm:GD}). In more detail:

  Applying \( R\sHom(-,
  \omega_{X_{-1}}^\bullet) \) to the first quasi-isomorphism of
  \cref{eq:meta-cor1} results in a quasi-isomorphism
  \begin{equation}
    R\sHom(\cone(\sO_{X_{-1}} \to R \epsilon^X_* \sO_{X_\bullet})[-1] ,  \omega_{X_{-1}}^\bullet) \simeq  R\sHom(Rr_{-1*} \sK,  \omega_{X_{-1}}^\bullet)
  \end{equation}
  and by \cref{thm:GD} there is a natural quasi-isomorphism
  \begin{equation}
    R\sHom(Rr_{-1*} \sK,  \omega_{X_{-1}}^\bullet) \simeq R\sHom(\sK,  \omega_{Z_{-1}}^\bullet).
  \end{equation}
  This motivates our choice of \( \sL := R\sHom(\sK,
  \omega_{Z_{-1}}^\bullet)\), and establishes the first quasi-isomorphism of
  \cref{eq:meta-cor1}; the proof of the second is similar. Again one can show that \(\sK \) has bounded, coherent cohomology using \cref{cor:bdd-coh}.

  Finally, when \(X_\bullet\) and \(Y_\bullet\) are the semi-simplicial schemes
  associated to simple normal crossing pairs, we know by
  \cref{lem:cech-cx-reg-seq-Cdiv} that there are quasi-isomorphisms
  \begin{equation}
    \begin{split}
      &\cone(\sO_{X} \to R \epsilon^X_* \sO_{X_\bullet})[-1] \simeq \lstrshf{X} \text{  and  } \\
      &\cone(\sO_{Y} \to R \epsilon^Y_* \sO_{Y_\bullet})[-1] \simeq \lstrshf{Y}.
    \end{split}
  \end{equation}
  Combining these with \cref{item:meta-cor1,item:meta-cor2} gives \cref{eq:meta-cor-special}.
\end{proof}

\subsection{Invariance results for cohomology of snc ideal sheaves}

\begin{lemma}
  \label{lem:constr-ssimp-scheme}
  Let \(S\) be an excellent noetherian scheme and let \(\lsp{X}\) and
  \(\lsp{Y}\) be simple normal crossing pairs separated and of finite type over
  \(S\), and let \(X \xleftarrow{r} Z \xrightarrow{s} Y\) be a thrifty proper
  birational equivalence over \(S\). Let \(X_\bullet\) and \(Y_\bullet\) be the
  semi-simplicial schemes associated to \(\lsp{X}\) and \(\lsp{Y}\)
  respectively. Then, there exist morphisms of augmented semi-simplicial schemes
  over \(S\)
  \begin{equation}
    \begin{tikzcd}
      X_\bullet \arrow[d] & U_\bullet \arrow[l, "\imath_\bullet"] \arrow[r, "\jmath_\bullet"] \arrow[d] & Y_\bullet \arrow[d] \\
      X = X_{-1} & U_{-1} \arrow[l, "\imath_{-1}"] \arrow[r, "\jmath_{-1}"] & Y_{-1} = Y
    \end{tikzcd}
  \end{equation}
  such that
  \begin{enumerate}
    \item the morphisms \(X_i \xleftarrow[]{\imath_i} U_i \xrightarrow{\jmath_i}
          Y_i\) are dense open immersions for all \(i\) and
    \item there exists a \(S\)-scheme \(Z_{-1}\)
          and \(U_{-1}\)-admissible blowups \(X_{-1} \xleftarrow[]{r_{-1}}
          Z_{-1} \xrightarrow{s_{-1}} Y_{-1}\).
  \end{enumerate}
  That is, the hypotheses of \cref{lem:ssimp-common-blp} (including the
  non-conjectural hypotheses of its ``moreover'') are satisfied.
\end{lemma}

\begin{proof}
  Set \(\Delta_Z = r^{-1}_* \Delta_X =  s^{-1}_* \Delta_Y\). By
  \cref{lem:common-open} there is a dense open set \(X \hookleftarrow U_X
  \hookrightarrow Z\) (resp \(Z \hookleftarrow U_Y \hookrightarrow Y\))
  containing all generic points of strata of \(\snc(X, \Delta_X)\) and \(\snc(Z,
  \Delta_Z)\) (resp. \(\snc(Y, \Delta_Y)\) and \(\snc(Z, \Delta_Z)\)). Then \(U :=
  U_X \cap U_Y \) is a dense open containing all generic points of strata of
  \(\snc(X, \Delta_X)\), \(\snc(Y, \Delta_Y)\)  and \(\snc(Z, \Delta_Z)\).  Set
  \(\Delta_U := \Delta_Z|_U\), so that \((U, \Delta_U)\) is simple normal crossing
  pair together with thrifty birational (but not necessarily projective)
  morphisms \(\lsp{X} \xleftarrow{r|_U} \lsp{U} \xrightarrow{s|_U} \lsp{Y}\).
  We now let \(X_\bullet\), \(Y_\bullet\) and \(U_\bullet\) be the augmented
  semi-simplicial schemes associated to \(\lsp{X}, \lsp{Y}\) and \(\lsp{U}\) as
  in the discussion at the beginning of \Cref{sec:simpres-specseq}, and consider
  the resulting morphisms
  \begin{equation}
    \begin{tikzcd}
      X_\bullet \arrow[d] & U_\bullet \arrow[l] \arrow[r] \arrow[d] & Y_\bullet \arrow[d] \\
      X_{i-1} = X & U_{i-1} = U \arrow[l] \arrow[r] & Y_{i-1} = Y
    \end{tikzcd}
  \end{equation}
  Since \(U\) contains the generic points of all strata of \(\snc(Z, \Delta_Z)\), the morphisms \(X_{i} \leftarrow U_i \rightarrow Y_i\) are dense
  open immersions for all \(i\), and the differentials and augmentations of
  \(X_\bullet\) and \(Y_\bullet\) are closed immersions, hence proper. Finally
  applying \cref{lem:common-blp} to the collection of open immersions \(U
  \subseteq X,Z\) \emph{over} \(X\), we obtain \(U\)-admissible blowups
  \(\tilde{X}, \tilde{Y}\) of \(X, Y\) respectively, as well as a separated
  finite type \(X\)-scheme \(W\) with open immersions \(\tilde{X}, \tilde{Z}
  \subseteq W\) covering \(W\). Again properness of \(\tilde{X}, \tilde{Y}\)
  over \(X\) forces \(\tilde{X} = \tilde{Z} = W\), hence replacing \(Z\) with
  \(\tilde{Z}\) we can ensure \(r: Z \to X\) is a \(U\)-admissible blowup.
  Repeating this construction with \(Y, Z\) in place of \(X, Z\), we may ensure
  \(s: Z \to Y \) is also a \(U\)-admissible blowup.
\end{proof}

\begin{theorem}
  \label{cor:complex-on-Z}
  With the same hypotheses as \cref{lem:constr-ssimp-scheme}, there are
  quasi-isomorphisms \[ Rf_* \lstrshf{X} \simeq  Rg_* \lstrshf{Y} \text{  and  }
    Rf_* \omega_{X}(\Delta_X) \simeq Rg_* \omega_Y (\Delta_Y). \]
\end{theorem}

\begin{proof}
  By \cref{lem:constr-ssimp-scheme}, the hypotheses of
  \cref{lem:ssimp-common-blp}, hence also its corollary \cref{cor:meta-cor}, are
  satisfied. In particular, since by hypothesis we begin with simple normal
  crossing pairs \(\lsp{X}\) and \(\lsp{Y} \), the ``special case'' of
  \cref{cor:meta-cor} applies, and thus there exist a Cohen-Macaulay scheme
  \(Z_{-1}\), projective birational morphisms \(X \xleftarrow{r_{-1}} Z_{-1}
  \xrightarrow{s_{-1}} Y\), and complexes \(\sK, \sL\) in the derived category
  \(D_c^b(Z)\) together with quasi-isomorphisms
  \begin{equation}
    \label{eq:have-these-qisos}
    \begin{split}
      &\lstrshf{X} \simeq  Rr_{-1*} \sK, \, \, Rs_{-1*} \sK \simeq \lstrshf{Y}, \\
      &\omega_{X}(\Delta_X) \simeq  Rr_{-1*} \sL \text{  and  }
      Rs_{-1*} \mathscr{L} \simeq \omega_{Y}(\Delta_Y).
    \end{split}
  \end{equation}
  Applying the pushforward \(Rf_*\) (resp. \(Rg_* \)) to the first (resp.
  second) of the above quasi-isomorphisms and using the functorial
  quasi-isomorphism \(Rf_* Rr_{-1*} \sK \simeq Rg_* Rs_{-1*} \sK\) we see that
  \begin{equation}
    Rf_* \lstrshf{X} \simeq  Rf_* Rr_{-1*} \sK \simeq  Rg_* Rs_{-1*} \sK \simeq Rg_* \lstrshf{Y}.
  \end{equation}
  Similarly, applying \(Rf_*\) (resp. \(Rg_* \)) to the third (resp.
  fourth)  quasi-isomorphisms of \cref{eq:have-these-qisos} and using the functorial
  quasi-isomorphism \(Rf_* Rr_{-1*} \sL \simeq Rg_* Rs_{-1*} \sL\) we see that
  \begin{equation}
    Rf_* \omega_{X}(\Delta_X)  \simeq  Rf_* Rr_{-1*} \sL \simeq  Rg_* Rs_{-1*} \sL \simeq Rg_* \omega_{Y}(\Delta_Y).
  \end{equation}
\end{proof}

\section{Applications to rational pairs}
\label{sec:app-ratl-pairs}

\begin{definition}[{compare with \cite[Def. 2.78]{MR3057950}}]
  \label{def:ratl-res}
  Let \(\lsp{S}\) be a pair as in \cref{def:pair} and assume
  \(\Delta_S\) is reduced and effective.  A proper birational morphism \(f: X
  \to S\) is a \emph{rational resolution} if and only if
  \begin{enumerate}
    \item \label{item:ratlsings1} \(X\) is regular and the strict transform
          \(\Delta_X := f^{-1}_* \Delta_S\) has simple normal crossings,
    \item \label{item:ratlsings2} the natural morphism \(\sO_S(-\Delta_S) \to
          Rf_* \sO_X(-\Delta_X)\) is a quasi-isomorphism, and
  \end{enumerate}
  letting \(\omega_X = h^{-\dim X}\omega_X^\bullet \) where we use
  \(\omega_X^\bullet = f^! \omega_S^\bullet\) as a normalized dualizing complex
  on \(X\),
  \begin{enumerate}[resume]
    \item \label{item:ratlsings3} \(R^i f_* \omega_X(\Delta_X) = 0\) for
          \(i>0\).
  \end{enumerate}
\end{definition}

In the situation of \cref{def:ratl-res}, the map \(\sO_S(-\Delta_S)
\to Rf_* \sO_X(-\Delta_X)\) appearing in condition \cref{item:ratlsings2} is
Grothendieck dual to a morphism
\begin{equation}
  \begin{split}
    &Rf_* \omega_X^\bullet(\Delta_X) \xrightarrow[(1)]{=}  Rf_* R\sHom_X(\lstrshf{X}, \omega_X^\bullet)  \\
    & \xrightarrow[(2)]{\simeq}  R\sHom_S (Rf_* \lstrshf{X}, \omega_S^\bullet) \xrightarrow[(3)]{}  R\sHom_S(\lstrshf{S}, \omega_S^\bullet)
  \end{split}
\end{equation}
where the equality (1) comes from the fact that \(\Delta_X\) is a Cartier
divisor (\(\lsp{X}\) is snc by hypothesis), the isomorphism (2) comes from
Grothendieck duality and the map (3) is obtained from the morphism of \cref{item:ratlsings2} by applying
the derived functor \(R\sHom_S(-, \omega_S^\bullet)\). As \(X\) is regular and
the dualizing complex \(\omega_X^\bullet\) is normalized  \(h^i \omega_X^\bullet
= 0\) for \(i \neq - \dim X\); in other words, \(\omega_X^\bullet \simeq
\omega_X[\dim X]\). Twisting this equation with the Cartier divisor \(\Delta_X\)
gives \(\omega_X^\bullet (\Delta_X)\simeq \omega_X(\Delta_X)[\dim X]\). If
\(\lstrshf{S} \to Rf_* \lstrshf{X}\) is a quasi-isomorphism, so is
\[ Rf_*  \omega_X(\Delta_X)[\dim X] \simeq  Rf_* \omega_X^\bullet (\Delta_X) \to
  R\sHom_S(\lstrshf{S}, \omega_S^\bullet)\] and taking cohomology sheaves we see
that \(R^{i + \dim X} f_* \omega_X(\Delta_X) \simeq h^iR\sHom_S(\lstrshf{S},
\omega_S^\bullet)\) for all \(i \).

Thus \emph{given} conditions \cref{item:ratlsings1,item:ratlsings2} of
\cref{def:ratl-res}, condition \cref{item:ratlsings3} is equivalent to
Cohen-Macaulayness of the \emph{sheaf} \(\lstrshf{S}\). We record these
observations as a lemma.

\begin{lemma}[{compare with \cite[Cor. 2.73, Props. 2.82-2.23]{MR3057950},
        \cite[Def. 1.3]{kovacsRationalSingularities2022}}]
  \label{lem:ratl-res-alternate}
  With notation and setup as in \cref{def:ratl-res}, the morphism \(f: X \to
  S\) is a rational resolution if and only if
  \begin{enumerate}
    \item \label{item:ratlsingsredux1} \(X\) is regular and the strict transform
          \(\Delta_X := f^{-1}_* \Delta_S\) has simple normal crossings,
    \item \label{item:ratlsingsredux2} the natural morphism \(\sO_S(-\Delta_S)
          \to Rf_* \sO_X(-\Delta_X)\) is a quasi-isomorphism, and
    \item \label{item:ratlsingsredux3} the sheaf \(\lstrshf{S}\) is Cohen-Macaulay.
  \end{enumerate}
\end{lemma}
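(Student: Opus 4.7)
The plan is to observe that conditions \cref{item:ratlsings1,item:ratlsings2} of \cref{def:ratl-res} are verbatim \cref{item:ratlsingsredux1,item:ratlsingsredux2}, so the task reduces to showing that, assuming (i) and (ii), condition \cref{item:ratlsings3} is equivalent to Cohen-Macaulayness of the coherent sheaf \(\lstrshf{S}\). The preamble to the lemma already sketches this; what remains is to organize the argument and keep careful track of the dimension shifts.

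First I would dualize the quasi-isomorphism in (ii) via Grothendieck duality (\cref{thm:GD}). Since \(X\) is regular and \(\Delta_X\) is Cartier, \(\lstrshf{X}\) is invertible, so \(R\sHom_X(\lstrshf{X}, \omega_X^\bullet) = \omega_X^\bullet(\Delta_X)\). Combined with the duality isomorphism, this produces a natural identification
\[Rf_*\omega_X^\bullet(\Delta_X) \simeq R\sHom_S\bigl(Rf_*\lstrshf{X}, \omega_S^\bullet\bigr).\]
Applying \(R\sHom_S(-, \omega_S^\bullet)\) to the quasi-isomorphism in (ii) then yields a quasi-isomorphism
\[Rf_*\omega_X^\bullet(\Delta_X) \isom R\sHom_S\bigl(\lstrshf{S}, \omega_S^\bullet\bigr).\]
Because \(X\) is regular and \(\omega_X^\bullet\) is normalized we have \(\omega_X^\bullet \simeq \omega_X[\dim X]\), and since \(f\) is birational \(\dim X = \dim S\). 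Taking cohomology sheaves and re-indexing gives a natural isomorphism
\[R^{i+\dim S}f_*\omega_X(\Delta_X) \simeq \sExt^i_S\bigl(\lstrshf{S}, \omega_S^\bullet\bigr) \quad \text{for every } i,\]
under which \cref{item:ratlsings3} translates to \(\sExt^i_S(\lstrshf{S}, \omega_S^\bullet)=0\) for all \(i > -\dim S\).

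Finally, I would invoke the standard local-duality criterion: for a coherent sheaf \(\sF\) with \(\supp\sF = S\) on an equidimensional excellent \(S\) admitting a normalized dualizing complex \(\omega_S^\bullet\), \(\sF\) is Cohen-Macaulay if and only if \(\sExt^i_S(\sF, \omega_S^\bullet)=0\) for all \(i \neq -\dim S\) (see e.g.\ \cite[\S V.8]{MR0222093}). Vanishing for \(i < -\dim S\) is automatic from \(\dim \supp\sF = \dim S\), so the entire content of Cohen-Macaulayness is vanishing for \(i > -\dim S\) --- exactly the translated form of \cref{item:ratlsings3}. Applied with \(\sF = \lstrshf{S}\) (which has support all of the reduced equidimensional \(S\), since by \cref{def:pair} no component of \(\Delta_S\) lies in \(\Sing S\)), this completes the equivalence.

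I anticipate no substantive obstacle. The main items to watch out for are (a) that the conventions for the normalized dualizing complex \(\omega_X^\bullet = f^!\omega_S^\bullet\) on \(X\) are consistent with the shift \(\omega_X^\bullet \simeq \omega_X[\dim X]\), so that the degrees truly line up across \(\dim X = \dim S\); and (b) that \(\lstrshf{S}\) genuinely has full support on \(S\), which is why the hypothesis on \(\Delta_S\) avoiding components of \(\Sing S\) is needed.
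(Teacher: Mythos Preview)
Your proposal is correct and follows essentially the same approach as the paper: the paper's argument is precisely the discussion immediately preceding the lemma statement (dualize (ii) via Grothendieck duality, use regularity of \(X\) to identify \(\omega_X^\bullet \simeq \omega_X[\dim X]\), and read off that \(R^{i+\dim X}f_*\omega_X(\Delta_X) \simeq h^i R\sHom_S(\lstrshf{S}, \omega_S^\bullet)\), whence (iii) becomes the Cohen--Macaulay condition on \(\lstrshf{S}\)). You add a bit more detail on the local-duality Cohen--Macaulay criterion and the full-support verification for \(\lstrshf{S}\), which the paper leaves implicit.
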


As illustrated in the examples of \Cref{sec:ex-non-thrifty}, even simple normal
crossing pairs \(\lsp{S}\) may have non-rational resolutions in the absence of
additional thriftiness restrictions, hence the following definition of rational
singularities for pairs.

\begin{definition}
  \label{def:ratl-pairs}
  Let \(\lsp{S}\) be a pair such that \(\Delta_S\) is a reduced effective Weil
  divisor. Then, \(\lsp{S}\) is \emph{resolution-rational}  if and only if it
  has a thrifty rational resolution.
\end{definition}

\subsection{Rational resolutions of pairs are all-for-one}

In the case where \(S\) is a normal variety over a field of characteristic 0, it
is known that if \(\lsp{S}\) has a thrifty rational resolution then \emph{every}
thrifty resolution is rational \cite[Cor. 2.86]{MR3057950}. The proof of this
fact shows more generally that if \(f: X \to S\) and \(g: Y \to S\) are
thrifty resolutions, then there are isomorphisms \(R^i f_* \lstrshf{X} \simeq
R^i g_* \lstrshf{Y}\) for all \(i\). This remains true in arbitrary
characteristic.

\begin{theorem}[{\cite[Cor. 2.86]{MR3057950} in characteristic 0}]
  \label{thm:all41}
  Let \(\lsp{S}\) be a pair such that \(\Delta_S\) is a reduced effective Weil
  divisor, let \(f: X \to S\) and \(g: Y \to S\) be thrifty resolutions and assume \cref{conj:macify} holds.
  Then there is are quasi-isomorphisms \(Rf_* \lstrshf{X} \simeq Rg_*
  \lstrshf{Y}\) and \( Rf_* \omega_X (\Delta_X) \simeq Rg_* \omega_Y(\Delta_Y)\) . In particular, \(f\) is a rational resolution if and only if
  \(g\) is.
\end{theorem}

Note that this includes \cref{thm:smth-to-smth} as a special case: indeed, if
\(\lsp{S}\) is a simple normal crossing pair then given any thrifty resolution
\(f: X \to S\) we may choose \(g\) to be the identity.

\begin{proof}
  By \cref{lem:common-open}, there are dense open immersions \(S \hookleftarrow
  U_X \hookrightarrow X\) and \(S \hookleftarrow
  U_Y \hookrightarrow Y\) such that \(U_X\) (resp. \(U_Y\)) contains all strata
  of \(\snc \lsp{S}\) and \(\lsp{X}\) (resp. \(\snc \lsp{S}\) and
  \(\lsp{Y}\)). Then \(U:= U_X \cap U_Y\) also contains all strata of \(\snc
  \lsp{S}\) --- moreover since \(f\) and \( g\) are thrifty, the strata of
  \(\lsp{X}\)  and \(\lsp{Y}\) are in one-to-one birational correspondence with
  those of \(\lsp{S}\), so it remains true that \(U\) contains all strata of
  \(\lsp{X}\)  and \(\lsp{Y}\). Replacing \(U\) with \(U \cap \snc \lsp{X}\), we
  may assume \((U, \Delta_U := \Delta_S \cap U)\) is an snc pair. We now have
  morphisms \(\imath : U \inj X\), \(\jmath: U \inj Y\) which are thrifty and birational, but not
  necessarily proper.

  Now let \(X_\bullet \to X_{-1} =: X\), \(Y_\bullet \to Y_{-1} =: Y\) and
  \(U_\bullet \to U_{-1} =: U\) be the augmented semi-simplicial schemes
  associated to these simple normal crossing pairs. The inclusions \(\imath\)
  and \(\jmath\) induce a diagram as in \eqref{eq:ssimp-common-blp1}; we proceed
  to verify that the hypotheses of \cref{lem:ssimp-common-blp} are satisfied.
  All schemes in sight are defined over the noetherian and hence quasi-compact
  quasi-separated \(S\). The differentials and augmentations are all closed
  immersions and hence proper, and thriftiness of  \(\imath\) and \(\jmath\)
  implies that the morphisms \(X_i \xleftarrow[]{\imath_i} U_i
  \xrightarrow{\jmath_i} Y_i\) are dense open immersions for all \(i\). Applying
  \cref{lem:common-blp} to the collection of \(S\)-schemes \(S, X\) and \(Y\)
  with the common dense open \(U\) gives a common \(U\)-admissible blowup \(X
  \leftarrow Z \rightarrow Y\). Finally (for the moreover part of the lemma)
  \(S\) is excellent by hypothesis, the \(U_i\) are regular and we are assuming
  \cref{conj:macify} holds.

  Applying  \cref{lem:ssimp-common-blp} and \cref{cor:meta-cor} we obtain a
  Cohen-Macaulay scheme \(Z_{-1}\), projective birational morphisms \(X
  \xleftarrow{r_{-1}} Z_{-1} \xrightarrow{s_{-1}} Y\), and complexes \(\sK,
  \sL\) in the derived category \(D_c^b(Z)\) together with quasi-isomorphisms
  \begin{equation}
    \label{eq:have-these-qisos}
    \begin{split}
      &\lstrshf{X} \simeq  Rr_{-1*} \sK, \, \, Rs_{-1*} \sK \simeq \lstrshf{Y}, \\
      &\omega_{X}(\Delta_X) \simeq  Rr_{-1*} \sL \text{  and  }
      Rs_{-1*} \mathscr{L} \simeq \omega_{Y}(\Delta_Y).
    \end{split}
  \end{equation}
  As was the case for \cref{cor:complex-on-Z}, pushing these quasi-isomorphisms
  forward with \(Rf_* \) and \(Rg_* \) and using the functorial
  quasi-isomorphisms \(Rf_* Rr_{-1*} \sK \simeq R_g* Rs_{-1*} \sK  \) and \(Rf_*
  Rr_{-1*} \sL \simeq R_g* Rs_{-1*} \sL \) completes the proof.
\end{proof}

\subsection{Semi-simplicial versus thrifty resolutions}

We can also deduce the following lemma. While it's statement is quite verbose,
it may be of interest as it opens the possibility of defining
rational singularities of pairs without thrifty resolutions.
\begin{lemma}
  \label{lem:ssimp-ratl-pairs}
  Let \(\lsp{S}\) be a pair, with \(\Delta_{S}\) reduced and effective, and
  suppose \(\lsp{S}\) admits a thrifty resolution. Let \(\epsilon^S:
  S_\bullet\to S\) be the associated semi-simplicial scheme and suppose there
  exists an augmented semi-simplicial scheme \(\epsilon^Y: Y_\bullet \to Y\)
  with proper differentials and augmentation,
  together with a map \(g_\bullet: Y_\bullet \to S_\bullet\)  such that each
  \(Y_i\) has rational singularities in the sense of \cite[Def. 1.3]{kovacsRationalSingularities2022} and all of the morphisms \(Y_i \to S_i\) are admissible blowups
  for \(U_{Y, i} \subseteq S_i\), where \(U_Y \subseteq S\) is a dense open containing
  the generic points of all strata of \(\snc\lsp{S}\) and \(U_{Y, \bullet}\) is the
  semi-simplicial scheme associated to \((U_Y, U_Y \cap \Delta_S)\). Let
  \(\mathscr{K} = \cone(\strshf{Y} \to R\epsilon^Y_* \strshf{Y_\bullet})[-1]\)
  (this is a complex in the derived category of \(Y \)).
  Then, \(\lsp{S}\) is a rational pair if and only the sheaf \(\lstrshf{S}\)
  is Cohen-Macaulay and the natural map \(\lstrshf{S} \to Rg_*
  \mathcal{K}\) is quasi-isomorphism.
\end{lemma}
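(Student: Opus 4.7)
The plan is to reduce to producing a functorial quasi-isomorphism $Rf_* \lstrshf{X} \simeq Rg_* \sK$ for some (hence any) thrifty resolution $f: \lsp{X} \to \lsp{S}$, and then to build this via a common semi-simplicial Macaulayfication as in the proof of \cref{cor:complex-on-Z}. Concretely, fix a thrifty resolution $f: \lsp{X} \to \lsp{S}$ (which exists by hypothesis) and set $\Delta_X := f^{-1}_* \Delta_S$ so that $\lsp{X}$ is snc. Combining \cref{lem:all41} with \cref{lem:ratl-res-alternate} shows that $\lsp{S}$ is a rational pair if and only if $\lstrshf{S}$ is Cohen-Macaulay and $\lstrshf{S} \to Rf_* \lstrshf{X}$ is a quasi-isomorphism, so the problem reduces to producing a quasi-isomorphism $Rf_* \lstrshf{X} \simeq Rg_* \sK$ identifying the two natural maps out of $\lstrshf{S}$.

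Next I would repeat the semi-simplicial construction of \cref{cor:complex-on-Z}. By \cref{lem:common-open} together with the admissible-blowup hypothesis on $Y_\bullet \to S_\bullet$, I can pick a dense open $V \subseteq S$ contained in $U_Y$ and containing the generic points of all strata of $\snc\lsp{S}$ and $\snc\lsp{X}$; this $V$ then sits as a common dense open in $X$, $Y$, and (after passing to the semi-simplicial scheme $V_\bullet$ of $(V, V \cap \Delta_S)$) in each pair $(X_i, Y_i)$. Applying \cref{lem:common-blp}/\cref{cor:common-adm-blp} produces a common $V$-admissible Cohen-Macaulay blowup $Z$ of $X$ and $Y$, and \cref{lem:ssimp-common-blp} then yields an augmented semi-simplicial scheme $Z_\bullet \to Z$ with each $Z_i$ Cohen-Macaulay, together with maps $X_\bullet \xleftarrow{r_\bullet} Z_\bullet \xrightarrow{s_\bullet} Y_\bullet$ whose components are projective $V_i$-admissible blowups.

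With $Z_\bullet$ in hand I would apply \cref{thm:sk-ratlsings} termwise: each $X_i$ is regular and each $Y_i$ has rational singularities, so $\sO_{X_i} \simeq Rr_{i*} \sO_{Z_i}$, $\sO_{Y_i} \simeq Rs_{i*} \sO_{Z_i}$, and similarly for the augmentations $\sO_X \simeq Rr_* \sO_Z$ and $\sO_Y \simeq Rs_* \sO_Z$. By \cref{cor:iso-p1} these upgrade to $R\epsilon^X_* \sO_{X_\bullet} \simeq Rr_* R\epsilon^Z_* \sO_{Z_\bullet}$ and $R\epsilon^Y_* \sO_{Y_\bullet} \simeq Rs_* R\epsilon^Z_* \sO_{Z_\bullet}$. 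Setting $\sK_Z := \cone(\sO_Z \to R\epsilon^Z_* \sO_{Z_\bullet})[-1]$ and pushing its defining distinguished triangle along $Rr_*$, resp.\ $Rs_*$, then yields (using \cref{lem:cech-cx-reg-seq-Cdiv} on the $X$ side) quasi-isomorphisms $Rr_* \sK_Z \simeq \underline{\Omega}^0_{X,\Delta_X} \simeq \lstrshf{X}$ and $Rs_* \sK_Z \simeq \sK$. Since $f \circ r = g \circ s$, applying $Rf_*$ and $Rg_*$ delivers $Rf_* \lstrshf{X} \simeq Rg_* \sK$, and functoriality of the descent spectral sequence \cref{lem:desc-ss} ensures the natural maps from $\lstrshf{S}$ are identified, completing the reduction.

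The main obstacle is verifying the hypotheses of \cref{lem:ssimp-common-blp} in this mixed setting --- in particular, checking that the chosen dense open $V_i$ really sits inside $Y_i$ as a dense open for every $i$ (which uses $V \subseteq U_Y$ together with the admissible-blowup hypothesis on $Y_i \to S_i$), and that $Y$ itself carries rational singularities so that the augmentation-level Kov\'acs identification $\sO_Y \simeq Rs_* \sO_Z$ is also available. Once these compatibilities are in place, the remainder of the argument is a direct adaptation of the proof of \cref{cor:complex-on-Z}.
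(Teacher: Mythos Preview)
Your proposal is correct and follows essentially the same approach as the paper: fix a thrifty resolution $f$, reduce via \cref{lem:ratl-res-alternate} to proving $Rf_*\lstrshf{X}\simeq Rg_*\sK$, then construct a common Cohen--Macaulay semi-simplicial roof $Z_\bullet$ using \cref{lem:ssimp-common-blp} and apply \cref{thm:sk-ratlsings} termwise together with the descent spectral sequence. The paper's proof simply compresses your middle two paragraphs into the sentence ``the first two paragraphs of the proof of \cref{lem:all41} remain valid,'' and the compatibility checks you flag as obstacles (density of $V_i$ in $Y_i$, rationality of $Y$ at the augmentation level) are precisely what that sentence is implicitly asserting.
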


\begin{proof}
  Let \(f: X \to S\) be a thrifty resolution. We observe that the first two
  paragraphs of the proof of \cref{thm:all41} remain valid when \(Y_\bullet\) is
  defined as in the lemma (as opposed to being obtained as the associated
  semi-simplicial scheme of an snc pair \(\lsp{Y}\)). Since the \(Y_i\) are
  assumed to have rational singularities in the sense of
  \cite[Def. 1.3]{kovacsRationalSingularities2022} the spectral sequence argument
  appearing in \cref{cor:meta-cor} remains valid and we conclude
  \begin{equation}
    \label{eq:ssimpratlpairs}
    Rf_* \lstrshf{X} \simeq Rg_*  \mathcal{K}.
  \end{equation}
  Now, if \(\lsp{S}\) is a rational pair, by \cref{def:ratl-res} and \cref{lem:ratl-res-alternate} we conclude that
  \(\lstrshf{S} \simeq Rf_*\lstrshf{X}\) and \(\lstrshf{S}\) is Cohen-Macaulay;
  then from  \cref{eq:ssimpratlpairs} we conclude \(\lstrshf{S} \simeq Rg_*
  \mathscr{K}\). On the other hand, if \(\lstrshf{S}\) is Cohen-Macaulay and
  \(\lstrshf{S} \simeq Rg_* \mathscr{K}\) \cref{eq:ssimpratlpairs} shows \(\lstrshf{S} \simeq
  Rf_*\lstrshf{X}\) and \cref{def:ratl-res} and \cref{lem:ratl-res-alternate}
  imply \(f\) is a rational resolution.
\end{proof}

In characteristic 0 where we know resolutions exist, we can construct a
\(Y_\bullet \) as in the statement of the lemma with each \(Y_i\) \emph{smooth},
using the methods of \Cref{sec:constructions}, but substituting Hironaka's
strong resolution of singularities for \cref{conj:macify}. \emph{Unlike} an
augmented semi-simplicial resolution obtained from a thrifty resolution \(f: X
\to S\), the differentials of \(Y_\bullet\) need not be closed embeddings. To
illustrate this distinction in the simplest possible situation, suppose \(S\) is a smooth variety over \(\CC\) and
\(D \subset S\) is a (not necessarily smooth) irreducible divisor. In this case a thrifty resolution is simply an embedded resolution of
\(D\). On the other hand, an
augmented semi-simplicial resolution \(g_\bullet Y_\bullet \to S_\bullet\) can be obtained as
\(\epsilon: \tilde{D} \to S\) where \(\pi: \tilde{D} \to D\) is any resolution of \( D\)
and \(\epsilon\) is the composition \(\tilde{D} \to D \inj S\). Unpacking the
definition of \(\mathscr{K}\) we see that it is the shifted cone of the morphism
\(\strshf{S} \to R\pi_* \strshf{\tilde{D}}\), and hence coincides with
\(\strshf{S}(-D)\) precisely when \(R\pi_* \strshf{\tilde{D}} =\strshf{D}\).
This recovers the fact that when \(S\) is smooth and \(D\) is irreducible, \((S,
D)\) is a rational pair if and only if \(D\) has rational singularities
\cite[Rmk. 2.85]{MR3057950}.

\nocite{EricksonLindsay2014Dior}
\printbibliography

\clearpage

\appendix

\section{(Non-)examples of thrift}
\label{sec:ex-non-thrifty}

In this section we work over a field \(k\). Our first example is not new, and likely served as the original motivation for
considering thrify morphisms.

\begin{example}
  \label{ex:blp-2lines}
  Let \(S = \AA^2_{xy}\) and \(\Delta = V(xy)\). Then \(f:
  X = \Bl_0 S \to S\) is neither thrifty nor rational.
  Indeed, letting \(D_1 = V(x), D_2 = V(y)\) we see that \(\Delta\) is the union
  of the 2 lines \(D_1, D_2\) meeting at the origin. Let \(\tilde{D}_i =
  f^{-1}_* D_i  \) be the strict transforms, \(E = f^{-1}(0)\) the exceptional
  divisor, and \(\tilde{\Delta} = \tilde{D}_1+ \tilde{D}_2\) 
  .
  The map \(f: X \to S\) fails to be thrifty since it is not an isomorphism over
  the stratum \({0} = D_1 \cap D_2\) of \((S, \Delta)\). We will calculate
  cohomology to show \(f\) isn't rational either. 
  
  Since \(S = \AA^2_{xy}  \) is affine, we can identify the sheaves \(R^if_*
  \sO_{X}( -\tilde{\Delta})\) as the sheaves associated to the \(k[x, y]\)-modules
  \(H^i(X, \sO_{X}(-\tilde{\Delta}))\). Observe that \(X\) can be identified with
  the geometric line bundle \(\Spec_{\PP^1} \Sym \sO_{\PP^1}(1)\)
  associated to \(\sO_{\PP^1}(-1)\). Under this identification, the
  projection \(\pi: \Spec_{\PP^1} \Sym \sO_{\PP^1}(1) \to \PP^1\) corresponds to the
  composition \(\Bl_0 S \subseteq \AA^2 \times \PP^1 \to \PP^1\), and the blowup
  map \(f: \Spec_{\PP^1} \Sym \sO_{\PP^1}(1) \to \AA^2\) corresponds to the
  natural
  map 
  \[ \Spec_{\PP^1} \Sym \sO_{\PP^1}(1) \to \Spec_{k} H^0(\PP^1,  \Sym
  \sO_{\PP^1}(-1)) =  \Spec_{k} k[x, y] = \AA^2 \]
  Hence \(\tilde{\Delta} = \pi^* (0 + \infty)\). Now since \(\pi\) is affine its
  Leray spectral sequence degenerates to give 
  \[ \begin{split}
    H^i(X, \sO_{X}(-\tilde{\Delta})) &=  H^i(\PP^1,
  \pi_*\sO_{X}(-\tilde{\Delta}))  \text{ and via projection formula }\\
  \pi_*\sO_{X}(-\tilde{\Delta}) &= \pi_*\sO_{X}(-\pi^* (0 + \infty)) =
  (\pi_*\sO_{X}) (-0 - \infty)
  \end{split}\]
  By the correspondence between affine schemes and sheaves of algebras,
  \[\pi_*\sO_{X} = \Sym \sO_{\PP^1}(1) = \bigoplus_{d\geq 0}\sO_{\PP^1}(d)\]
  Hence \(H^i(X, \sO_{X}(-\tilde{\Delta})) = \oplus_{d\geq 0} H^i(\PP^1,
  \sO_{\PP^1}(d-2))\). In particular, when \(i=1\) and \(d=0\), we see \(H^1(X,
  \sO_{X}(-\tilde{\Delta})) = H^1(\PP^1, \sO_{\PP^1}(-2)) \simeq k \) by
  \cite[Thm. III.5.1]{MR0463157}. 
\end{example}

An elaboration of \cref{ex:blp-2lines} shows in general that if \((S,
\Delta)\) is a simple normal crossing pair and \(Z \subseteq S\) is a stratum,
then \(f: X = \Bl_Z S \to S\) fails to be thrifty. Localizing at the generic
point \(\eta \in Z\) we can reduce to the case where \(Z\) is replaced by a
closed point \(\eta \in S\) and \(\Delta = V(x_1 \cdot x_2 \cdots x_n)\) where
\(x_1,\cdots, x_n \in \mathfrak{m}_\eta\) is a regular system of parameters.
Then the long exact sequence obtained by pushing forward \(\sO_X(-\tilde{\Delta}
- E) \to \sO_X(-\tilde{\Delta}) \to \sO_E(-\tilde{\Delta}|_E)  \) ends in 
\[  R^{n-1}f_* \sO_X(-\tilde{\Delta} - E) \to R^{n-1}f_* \sO_X(-\tilde{\Delta})
\to R^{n-1}f_* \sO_E(-\tilde{\Delta}|_E) \to R^{n}f_* \sO_X(-\tilde{\Delta} - E)
=0\] where the vanishing on the right holds since the maximal fiber dimension of
\(f\) is \(n-1\) \cite[Cor. III.11.2]{MR0463157}. Thus \(R^{n-1}f_*
\sO_X(-\tilde{\Delta}) \to R^{n-1}f_* \sO_E(-\tilde{\Delta}|_E) = H^{n-1}(E,
\sO_E(-\tilde{\Delta}|_E))\) is surjective, and identifying \(E\) with the
projectivized Zariski  tangent  space \(\PP(TS_{\eta})\) with homogeneous
coordinates \(x_1,\dots, x_n\) and \(\tilde{\Delta}|_E\) with \(V(\prod_i x_i)\)
shows \(H^{n-1}(E, \sO_E(-\tilde{\Delta}|_E))\simeq H^{n-1}(\PP^{n-1},
\sO_{\PP^{n-1}}(-n))\simeq k\). For related discussion see \cite[p. 86]{MR3057950}.

The next example answers (in the affirmative!) a question of Erickson
\cite[p.2]{Erickson2014DeformationIO} and Prelli \cite[p.3]{Prelli2017OnRD}
about whether there exists a resolution which is rational but not thrifty. In
fact, we give such an example where the underlying pair \((S, \Delta)\) is rational.

\begin{example}
  \label{ex:not-thrifty-but-rational}
  Let \(S = V(xy-zw) \subseteq \AA^4_{xyzw}\), \(D_0 = V(x, w)\) and
  \(D_{\infty} = V(y, z)\); finally let \(C_\infty = V(w, y)\). We can identify \(S = C(\PP^1 \times \PP^1)\) as the
  affine cone over the Segre embedding \(\PP^1_{s} \times \PP^1_{t} \inj
  \PP^3_{xyzw} \) given by 
  \begin{equation}
    \label{eq:segre-matrices}
    \begin{bmatrix}
      x & w \\
      z & y
    \end{bmatrix} = 
    \begin{bmatrix}
      s_0 \\
      s_1   
    \end{bmatrix} \begin{bmatrix}
      t_0 & t_1
    \end{bmatrix} = 
    \begin{bmatrix}
      s_0 t_0 & s_0 t_1 \\
      s_1 t_0 & s_1 t_1 \end{bmatrix}
    \end{equation}
    Hence \(D_0 = C(\{0\}\times \PP^1) \),
    \(D_\infty = C(\{\infty\}\times \PP^1)\) and \(C_\infty = C(\mathbb{P}^1
    \times \{\infty\})\). 
    
    Let \(\Delta = D_0 +D_\infty + C_\infty\). Note that \(\Delta\) is \emph{not}
    Cartier, as it is not linearly equivalent to any multiple of \(C(\{0\}\times
    \PP^1) +  C(\PP^1 \times \{0\} )\) (here \(\{0\}\times \PP^1 +\PP^1 \times
    \{0\} \) is a hyperplane section of the Segre embedding) --- see e.g.
    \cite[Ex. II.6.3]{MR0463157},\cite[Prop. 3.14]{MR3057950}. Since \(K_S\)
    \emph{is} \(\QQ\)-Cartier, it follows that the pair \((S, \Delta=D_0
    +D_\infty)\) is not \(\QQ\)-Gorenstein --- in particular it isn't dlt, so we
    are not at risk of violating \cite[Thm. 2.87]{MR3057950} which implies that
    a resolution of a dlt pair is thrifty \emph{if and only if} it is rational.
    
    Now let \(f: X= \Bl_{D_0} S \to S\) be the blowup at \(D_0\), let
    \(\tilde{D}_i = f^{-1}_* D_i\) for \(i = 0, \infty\) and \(\tilde{C}_\infty
    = f^{-1}_* C_\infty\), and let \(\tilde{\Delta} = \tilde{D}_0 +
    \tilde{D}_\infty + \tilde{C}_\infty\). The map \(f\) is a small resolution
    of \(S\) (as mentioned in \cite[Ex. 2.7]{MR1658959}). This means we are
    not at risk of violating \cite[Prop. 1.6]{Erickson2014DeformationIO} which
    states that if a log resolution of a pair is rational then it is thrifty. Indeed, the ambient
    blowup is described as 
    \[ \Bl_{D_0}\AA^4 \subseteq \{(x, y, z, w), [u,v] \, | \, (x, w) \propto
    (u,v)\} \subseteq \AA^4 \times \PP^1_{uv}  \]
    so on the \(D(u)\) patch \((x, w) = \lambda (1, v)\) and 
    \[ xy - zw = \lambda y - z \lambda v = \lambda (y -zv) \] Since \(V(\lambda)\)
    is the exceptional divisor we see the strict transform \(X \subseteq
    \Bl_{D_0}\AA^4\) of \(S\) is \(V(y-zv)\) on the \(u=1\) patch --- this is
    smooth as it's a graph. By symmetry in \(x, w\), we conclude \(X\) is smooth.
    
    Even better, this allows us to parametrize \(X \cap D(u)\) with coordinates
    \(z, \lambda, v\):
    \begin{equation}
      \label{eq:segre-patch}
      \begin{split}
        &\AA^3_{z \lambda v} \simeq \Bl_{D_0} S \cap D(u) \subseteq D(u) \simeq
        \AA^5_{xyzwv} \\
        & \text{ sending } (z, \lambda, v) \mapsto (\lambda, zv, z,
        \lambda v, v) = (x,y,z,w,v)
      \end{split}
    \end{equation}
    So in particular the restriction of \(f\) looks like \((z, \lambda, v)
    \mapsto (\lambda, zv, z, \lambda v)\) and we see that the exceptional locus
    is the \(v\)-axis. In this coordinate patch the strict transforms
    \(\tilde{D}_0\) and \(\tilde{D}_\infty\) are \(V(\lambda)\) and \(V(z)\)
    respectively, which \emph{intersect along the \(v\)-axis \(V(\lambda, z)\)}!
    Thus \(\tilde{\Delta}\) has a stratum in \(\Ex(f)\) and \(f\) isn't thrifty.
    We also see that on this patch \(\tilde{C}_\infty = V(v)\). As a
    philosophical aside, the blowup coordinates \([u, v]\) correspond to \([x,
    w] = [s_0 t_0, s_0 t_1] = [t_0, t_1] \) as long as \(s_0\neq 0\), so \(\Ex
    f\) can be viewed as a copy of the \(\PP^1_t\) appearing in \(D_0 = C(\{0\}
    \times \PP^1_t)\) 
    .
    
    
    To show that \(f\) \emph{is} in fact a rational resolution we will use an
    alternative description of \(X\). Starting with the ample invertible sheaf
    \(\sO_{\PP^1 \times \PP^1}(1,1)\) we have natural morphisms of relative
    spectra
    \begin{equation}
      \label{eq:mps-rel-specs}
      \Spec_{\PP_s^1 \times \PP_t^1} \Sym \sO_{\PP^1 \times \PP^1}(1,1) \to
      \Spec_{\PP_t^1} \Sym \mathrm{pr}_{t*} \sO_{\PP^1 \times \PP^1}(1,1) \xrightarrow{f'}
      \Spec_{k} H^0(\Sym \sO_{\PP^1 \times \PP^1}(1,1) )
    \end{equation}
    where \(\pr_t : \PP_s^1
    \times \PP_t^1 \to \PP_t^1\) is the projection. It is well known that the
    scheme on the left can be identified with the blowup \(\Bl_0 S\), and the
    scheme on the right is \(S\). 
    \begin{claim}
      There is an isomorphism of \(S \times \PP^1\)-schemes 
      \[X = \Bl_{D_0}S \simeq \Spec_{\PP_t^1} \Sym \mathrm{pr}_{t*} \sO_{\PP^1
      \times \PP^1}(1,1) \]
    \end{claim}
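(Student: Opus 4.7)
My plan is to construct the isomorphism via the universal property of the blowup and verify it by an explicit computation on affine charts of $\PP^1_t$. First, using the projection formula one has
\[ \pr_{t*} \sO_{\PP^1 \times \PP^1}(1,1) \simeq H^0(\PP^1_s, \sO(1)) \otimes_k \sO_{\PP^1_t}(1) \simeq \sO_{\PP^1_t}(1)^{\oplus 2}, \]
with basis $s_0 \otimes (-), s_1 \otimes (-)$ on any chart that trivializes $\sO_{\PP^1_t}(1)$. Over $D(t_0) = \Spec k[\tau]$ (with $\tau = t_1/t_0$) this yields $X'|_{D(t_0)} = \Spec k[\tau, A, B]$ where $A := s_0 t_0$ and $B := s_1 t_0$, and under the natural map $\psi: X' \to S$ of \eqref{eq:mps-rel-specs} the Segre coordinates pull back as
\[ (x, y, z, w) = (s_0 t_0, s_1 t_1, s_1 t_0, s_0 t_1) = (A, \tau B, B, \tau A). \]
In particular $\psi^{-1}\sI_{D_0} \cdot \sO_{X'}|_{D(t_0)} = (A, \tau A) = (A)$ is principal, and by the symmetric computation over $D(t_1)$ it is principal there as well. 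Hence $\psi^{-1}\sI_{D_0} \cdot \sO_{X'}$ is an invertible ideal sheaf on $X'$, and the universal property of the blowup produces a unique $S$-morphism $\Phi: X' \to \Bl_{D_0} S$.

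Next I will show $\Phi$ is an isomorphism by inspection on the two charts. Comparing the formula $(\tau, A, B) \mapsto (A, \tau B, B, \tau A)$ above with the explicit patch description $(z, \lambda, v) \mapsto (\lambda, zv, z, \lambda v)$ of $\Bl_{D_0} S \cap D(u)$ from \eqref{eq:segre-patch}, the relabeling $A \leftrightarrow \lambda$, $B \leftrightarrow z$, $\tau \leftrightarrow v$ identifies the two affine $3$-spaces over the same image in $S$, so $\Phi$ restricts to an isomorphism $X'|_{D(t_0)} \isom \Bl_{D_0} S \cap D(u)$. The symmetric computation on $D(t_1)$ (using $\tau' = t_0/t_1$ and the patch $D(v) \subset \Bl_{D_0} S$ obtained by the $x \leftrightarrow w$, $u \leftrightarrow v$ symmetry of the construction) gives an isomorphism $X'|_{D(t_1)} \isom \Bl_{D_0} S \cap D(v)$. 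Since the two charts cover both sides, and the two local isomorphisms agree on the overlap $D(t_0) \cap D(t_1)$ under the change of variable $\tau' = \tau^{-1}$ (this is automatic, as both are characterized by the same image in $S$ and both are birational over $S$), $\Phi$ is a global isomorphism.

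Finally, to promote $\Phi$ to an isomorphism of $S \times \PP^1$-schemes I must identify the projections to $\PP^1$. On $\Bl_{D_0} S \subset S \times \PP^1_{uv}$ the $\PP^1$-coordinate satisfies $[u:v] = [x:w]$ on the locus $(x, w) \neq 0$, which via the Segre embedding equals $[s_0 t_0 : s_0 t_1] = [t_0 : t_1]$; this matches the structure map $X' \to \PP^1_t$ under the identification $[u:v] = [t_0:t_1]$, and by density this compatibility extends everywhere. The main obstacle in the proof is purely bookkeeping -- keeping the roles of the two $\PP^1$-factors of the Segre source straight, and checking that the two affine-patch isomorphisms are genuinely compatible on overlaps -- but the key substantive input is the direct coincidence of coordinate formulas identified above, after which everything else is a routine verification.
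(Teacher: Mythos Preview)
Your proof is correct. It carries out the universal-property route that the paper explicitly mentions as one option (``This can be proved via the universal property'') but does not execute. The paper instead sketches a second, ``quick, dirty'' argument: it writes down a morphism $X' \to S \times \PP^1_t$ and, under the additional assumption that $k$ is algebraically closed, checks that it induces a bijection on $k$-points with $X \subset S \times \PP^1_t$, relying on a fiberwise description of $X' \to \PP^1_t$.

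The main difference is that your argument is a genuine proof valid over an arbitrary base field, whereas the paper's point-counting sketch needs $k = \bar k$ and implicitly also that a bijection on closed points between reduced finite-type $k$-schemes of the same dimension, one of which is smooth, forces an isomorphism. Your approach buys generality and rigor; the paper's buys a conceptual picture (the pencil of $2$-planes on $S$). One small remark on exposition: once you have produced a single global $\Phi$ via the universal property, the compatibility of the two chart isomorphisms on the overlap is automatic, so your parenthetical justification is unnecessary (and the ``symmetry'' you invoke on the $D(v)$ patch is really the involution $(x \leftrightarrow w, y \leftrightarrow z, u \leftrightarrow v)$, not just $x \leftrightarrow w$, though the resulting computation is of course fine).
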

    This can be proved via the universal property. On the other hand, at
    least when \(k\) is algebraically closed, a quick, dirty and more illuminating
    proof is possible: we \emph{have} a morphism  \( (f', \pi): \Spec_{\PP_t^1}
    \Sym \mathrm{pr}_{t*} \sO_{\PP^1 \times \PP^1}(1,1) \to S \times \PP_t^1 \):
    the first factor is the second map of \eqref{eq:mps-rel-specs}, the second is
    the canonical projection 
    \[\pi : \Spec_{\PP_t^1} \Sym \mathrm{pr}_{t*} \sO_{\PP^1 \times \PP^1}(1,1)
    \to  \PP_t^1\] from the relative \(\Spec\) construction. \(X \subseteq S
    \times \PP_t^1\) by construction, and we can check \(\varphi\) maps the
    \(k\)-points of \(\Spec_{\PP_t^1} \Sym \mathrm{pr}_{t*} \sO_{\PP^1 \times
    \PP^1}(1,1) \) bijectively onto those of \(X\). Indeed, the fiber of
    \(\Spec_{\PP_t^1} \Sym \mathrm{pr}_{t*} \sO_{\PP^1 \times \PP^1}(1,1)\) over
    \(t \in \PP^1_t\) can be described as follows: Note by projection formula
    \begin{equation}
      \begin{split}
        &\mathrm{pr}_{t*} \sO_{\PP^1 \times \PP^1}(1,1) \simeq H^0(\PP^1_s ,
    \sO_{\PP^1_s }(1))\otimes_k \sO_{\PP^1_t }(1),  \\
    \text{  so  } &\Sym
    \mathrm{pr}_{t*} \sO_{\PP^1 \times \PP^1}(1,1) \simeq \Sym H^0(\PP^1_s ,
    \sO_{\PP^1_s }(1))\otimes_k \sO_{\PP^1_t }(1) 
      \end{split}
    \end{equation}
    Explicitly \(\Sym  H^0(\PP^1_s , \sO_{\PP^1_s }(1))\otimes_k \sO_{\PP^1_t }(1)  = \oplus_d k[s_0,  s_1]_d \otimes \sO_{\PP^1_t }(d) = k[s_0,s_1] \times  \Sym \sO_{\PP^1_t }(1)
    \) where \(\times \) denotes the product of graded rings of \cite[Ex. II.5.11]{MR0463157}  and hence for a \(k\)-point \(t\),
    \[ \pi^{-1}(t) \simeq \Spec k[s_0, s_1], \text{ so that } f'|_{\pi^{-1}(t)} :
    \pi^{-1}(t) \to S \] is a map \(\AA^2_{s_0s_1} \to S \subseteq \AA^4_{xyzw}\).
    Writing down the map of algebras corresponding to \(f'\) shows that it is none
    other than the linear transformation of \eqref{eq:segre-matrices}. Finally,
    referencing \eqref{eq:segre-patch} we see that the fibers of \(X \to \PP^1_t\)
    have the same description.\footnote{In slogan form: \(X = \Bl_{D_0}S\) is a
    pencil of 2-planes on \(S\) corresponding to the pencil of rulings \(\PP^1_s
    \times \{t\} \subseteq \PP^1\times \PP^1\).}
    
    Using the claim, we proceed as in \cref{ex:blp-2lines} using degeneration of
    the Leray spectral sequence for the affine map \(\pi: X \to \PP_t^1\) to
    calculate
    \[H^i(X, \sO_X(-\tilde{\Delta})) = H^i(\PP_t^1, \pi_*
    \sO_X(-\tilde{\Delta}))\] On \(\PP_t^1\), noting that \(\tilde{C}_\infty =
    \pi^*(\infty)\), the projection formula gives 
    \begin{equation}
      \label{eq:proj-formula-atiyah-ex}
      \pi_* \sO_X(-\tilde{\Delta}) = (\pi_* \sO_X(-\tilde{D}_0 -
      \tilde{D}_\infty))(-\infty) 
    \end{equation}
    and \(\pi_* \sO_X(-\tilde{D}_0 -
    \tilde{D}_\infty)\subseteq \pi_*
    \sO_X\)  is the sheaf of ideals \((s_0\cdot s_1) \subseteq k[s_0,s_1] \times
    \Sym \sO_{\PP^1_t }(1)\). Letting \((s_0\cdot s_1)_d \subseteq k[s_0,
    s_1]\) denote the \(d\)-th graded part, we see  
    \[\pi_* \sO_X(-\tilde{\Delta}) = \bigoplus_{d\geq 0} (s_0\cdot  s_1)_d
    \otimes_k \sO_{\PP_t^1}(d-1)\]
    where the ``-1'' comes from the twist ``\((-\infty)\)'' in
    \eqref{eq:proj-formula-atiyah-ex}. This yields:
    \begin{equation}
      \begin{split}
        &H^i(\PP_t^1, \pi_* \sO_X(-\tilde{\Delta})) = \bigoplus_{d\geq 0} (s_0\cdot  s_1)_d \otimes_k H^i(\PP_t^1, \sO_{\PP_t^1}(d-1)) \\
        &= \begin{cases}
          \bigoplus_{d\geq 0} (s_0\cdot
          s_1)_d \otimes_k (t_1)_d \subseteq k[s_0, s_1] \times
          k[t_0, t_1] =H^0(S, \sO_S) & \text{ if } i =0\\
          0 & \text{ if } i =1
        \end{cases}
      \end{split}
    \end{equation}
    the key point being that  \(H^1(\PP_t^1, \sO_{\PP_t^1}(d-1)) = 0\) for \(d\geq
    0\). This calculation shows \(f_*\sO_X(-\tilde{\Delta}) = \sO_S(-\Delta) \)
    (this holds for more general reasons, namely \(S\) is normal \cite[Lem. 2.1]{Prelli2017OnRD}) and
    \(R^1f_*\sO_X(-\tilde{\Delta}) = 0\).
    
    Finally, \((S, \Delta)\) \emph{is} a rational pair, as a consequence of the
    theorem below --- this was the main reason for including the additional
    divisor \(C_{\infty}\). If we had left it out, the above calculations would
    still show that \(f: X \to (S, D_0 + D_\infty)\) is a non-thrifty rational
    resolution, however the pair \((S, D_0 + D_\infty)\) isn't rational (also by
    the theorem below).    
    \begin{theorem}[{\cite[Thm. 3.2]{Prelli2017OnRD}}]
      \label{lem:ratl-cone-pairs}
      Let \((Y,B)\) be a pair such that \(Y\) is a normal variety over \(k
      \) and \(B \) is a reduced effective Weil divisor on \(Y\) (for
      example a simple normal crossing pair) and let \(\sL\) be an ample
      invertible sheaf on \(Y\). Let \((CY, CB) \) be the abstract affine
      cone over \((Y,B)\) with respect to \(\sL\): \(CY = \Spec_k H^0(Y,
      \Sym \sL)\) and \(CB\) is the image of \(\Spec_k H^0(B, \Sym
      \sL|_B) \to \Spec_k H^0(Y, \Sym \sL) = CY\) with its reduced
      subscheme structure. Then \((CY, CB) \) is a rational pair if and
      only if \((Y,B)\) is a rational pair and 
      \[H^i(Y, \sL^d(-B)) = 0 \text{ for } i >0, d \geq 0\]
    \end{theorem}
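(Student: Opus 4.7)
The plan is to realize $CY$ as a contraction of the total space of the line bundle $\sL$, and to transfer thrifty rational resolutions between $(Y,B)$ and $(CY,CB)$ by pullback. Let $\pi \colon V := \Spec_Y \Sym \sL \to Y$ be the line bundle projection, and let $p \colon V \to CY$ be the natural contraction sending the zero section $Y \hookrightarrow V$ to the vertex $0 \in CY$. Given a thrifty log resolution $g \colon Y' \to Y$ of $(Y,B)$ with strict transform $B' = g^{-1}_* B$, set $V' := \Spec_{Y'} \Sym g^*\sL$ with projection $\pi' \colon V' \to Y'$, and let $f := p \circ G \colon V' \to CY$ where $G \colon V' \to V$ is induced by $g$. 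The strict transform of $CB$ in $V'$ is $V'_{B'} := \Spec_{B'} \Sym (g^*\sL)|_{B'}$, while the exceptional locus of $f$ is the zero section $Y' \hookrightarrow V'$. Thriftiness of $f$ is a combinatorial check: the strata of $\snc(V', V'_{B'})$ are the cones $V'_{D'_J}$ indexed by strata $D'_J$ of $\snc(Y', B')$, and they map birationally to the cone strata $C_{D_J}$ of $\snc(CY,CB)$, so thriftiness of $g$ propagates.

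The key cohomological input is affineness of $\pi'$: we have
\[
\pi'_* \sO_{V'}(-V'_{B'}) = \bigoplus_{d \geq 0} g^*\sL^d(-B'), \qquad R^{>0} \pi'_* \sO_{V'}(-V'_{B'}) = 0,
\]
and therefore, via the projection formula for $g$ and the identification $CY = \Spec_k \bigoplus_d H^0(Y, \sL^d)$,
\[
Rf_* \sO_{V'}(-V'_{B'}) \simeq \Bigl(\bigoplus_{d \geq 0} R\Gamma\bigl(Y, \sL^d \ltensor Rg_*\sO_{Y'}(-B')\bigr)\Bigr)^{\sim}.
\]
For the $(\Leftarrow)$ direction, rationality of $(Y,B)$ gives $Rg_* \sO_{Y'}(-B') \simeq \sO_Y(-B)$, and then the hypothesis $H^{>0}(Y, \sL^d(-B)) = 0$ for $d \geq 0$ collapses the right-hand side to $\sO_{CY}(-CB)$, verifying \cref{item:ratlsingsredux2} of \cref{lem:ratl-res-alternate}; Cohen-Macaulayness of $\sO_{CY}(-CB)$ then follows via Grothendieck duality on $f$ applied to the Serre-dual form of the vanishing. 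For the $(\Rightarrow)$ direction, the all-for-one principle \cref{lem:all41} implies that the specific $f \colon V' \to CY$ is itself a rational resolution, so the right-hand side of the display must equal $\sO_{CY}(-CB)$; comparing the graded piece indexed by $d$ for $d \gg 0$ (where Serre vanishing on $Y$ isolates the higher direct images of $g$) extracts rationality of $(Y,B)$, and comparing the graded piece for each remaining $d \geq 0$ recovers the explicit vanishing.

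The main obstacle is the Cohen-Macaulay clause in the $(\Leftarrow)$ direction: a direct local-cohomology computation over the cone vertex shows that CM of $\sO_{CY}(-CB)$ requires $H^j(Y, \sL^d(-B)) = 0$ for $1 \leq j \leq \dim Y - 1$ and \emph{all} $d \in \ZZ$, whereas the hypothesis supplies these vanishings only for $d \geq 0$. The missing vanishings for $d < 0$ must therefore be extracted from Serre duality on the rational pair $(Y,B)$ combined with a Kawamata--Viehweg-type vanishing for ample powers of $\sL$ twisting $\omega_Y(B)$, which is where an implicit characteristic-zero assumption enters Prelli's result; the $(\Rightarrow)$ direction similarly relies on the existence of thrifty log resolutions to produce the $g$ used in the setup.
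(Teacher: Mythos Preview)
The paper does not prove this theorem; it is quoted with a citation to \cite[Thm.~3.2]{Prelli2017OnRD} and used as a black box in \cref{ex:not-thrifty-but-rational}. So there is no in-paper proof to compare against, and what follows is an assessment of your proposal on its own terms.

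Your setup is the standard one and is correct: the contraction \(p\colon V=\Spec_Y\Sym\sL\to CY\) is precisely the blowup of the vertex, hence proper, and pulling back a thrifty resolution \(g\colon (Y',B')\to(Y,B)\) along \(\pi\) does produce a thrifty resolution \(f\colon (V',V'_{B'})\to(CY,CB)\); your combinatorial justification for thriftiness is right. The graded computation of \(Rf_*\sO_{V'}(-V'_{B'})\) via the affine projection \(\pi'\) and the projection formula for \(g\) is also correct, and both directions of the equivalence for condition \cref{item:ratlsings2} follow as you describe, with the \((\Rightarrow)\) direction using Serre vanishing for \(d\gg 0\) to isolate \(R^{>0}g_*\sO_{Y'}(-B')\). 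One small addition in the \((\Rightarrow)\) direction: you should also note that Cohen--Macaulayness of \(\sO_Y(-B)\) descends from that of \(\sO_{CY}(-CB)\) restricted to the punctured cone, which is a smooth \(\GG_m\)-bundle over \(Y\).

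You have correctly isolated the genuine obstacle in the \((\Leftarrow)\) direction. Condition \cref{item:ratlsings3} (equivalently, via \cref{lem:ratl-res-alternate}, Cohen--Macaulayness of \(\sO_{CY}(-CB)\) at the vertex) unwinds, after the same graded calculation applied to \(\omega_{V'}(V'_{B'})=\pi'^*(\omega_{Y'}(B')\otimes g^*\sL)\), to the vanishing \(H^i(Y,\sL^m\otimes g_*\omega_{Y'}(B'))=0\) for \(i>0\) and \(m\geq 1\). This is a Kodaira-type vanishing for the rational pair \((Y,B)\) and is \emph{not} implied by the stated hypothesis \(H^{>0}(Y,\sL^d(-B))=0\) for \(d\geq 0\); it is supplied in characteristic \(0\) by the standard vanishing theorems for rational (or dlt) pairs, and this is indeed where Prelli's argument uses characteristic \(0\). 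Your diagnosis here is accurate and is the key substantive point.
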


    Applying the theorem to \(Y = \PP^1 \times \PP^1\) with the divisor \(B =
    \{0, \infty\} \times \PP^1 + \PP^1 \times \{\infty\}\) 
     which has associated invertible sheaf \(\sO_Y(B) \simeq
    \sO_{\PP^1 \times \PP^1}(2,1)\), together with the ample invertible sheaf
    \(\sL = \sO_{\PP^1 \times \PP^1}(1,1)\) we calculate (using K\"unneth)
    \begin{equation}
      \label{eq:check-its-ratl}
      H^i(Y, \sL^d(-B)) = H^i(\PP^1 \times \PP^1, \sO_{\PP^1 \times \PP^1}(d-2,d-1)) = \bigoplus_{j+k=i} H^j(\PP^1, \sO_{\PP^1}(d-2)) \otimes_k H^k(\PP^1, \sO_{\PP^1}(d-1))
    \end{equation}
    Noting that \(H^k(\PP^1, \sO_{\PP^1}(d-1)) = 0\) for \(k>0\) and \(d\geq
    0\), we see that \(H^2(Y, \sL^d(-B)) = 0\) for \(d\geq0\), and 
    \[  H^1(Y, \sL^d(-B)) = H^1(\PP^1, \sO_{\PP^1}(d-2)) \otimes_k H^0(\PP^1,
    \sO_{\PP^1}(d-1)) \]
    Now \(H^1(\PP^1, \sO_{\PP^1}(d-2)) =0\) for \(d \neq 0\), but \(H^0(\PP^1,
    \sO_{\PP^1}(-1)) = 0 \), so the tensor product is always 0.
  \end{example}

The last example of this section shows that even when \((S, \Delta)\) is a
simple normal crossing pair and \(f: X \to S\) is a \(U\)-admissible blowup for
some \(U \subseteq S\) containing all strata, and \(\tilde{\Delta} = f^{-1}_*
\Delta\) is snc, \(f\) may still fail to be thrifty. Unfortunately our presentation
only makes sense in characteristic 0, but I would be shocked and
appalled if this example doesn't work in any characteristic \(p>2\).

\begin{example}
  \label{ex:adm-not-thrifty}
  Let \(S = \AA^3_{xyz}\), let \(\Delta = V((z-x)(z+x))\) and let \(Z = V(x,
  y)\); let \(U = S \setminus Z\). Then there is a \(U\)-admissible blowup \(f:
  X \to S \) such that \( f^{-1}_* \Delta\) is a simple normal
  crossing divisor but \(f\) is not thrifty. 
  
  We first blow up \(Z\) to obtain \(g: \Bl_Z S \to S\), and claim that the
  strict transform of \(\Delta\) is no longer snc. Letting \(D_{\pm} =V(z\pm
  x)\) we can work in blowup coordinates described like
  \[ \Bl_Z S = \{((x, y, z) , [u, v]) \in \AA^3 \, | \, (x, y) \propto (u, v)\}
  \]
  so that on the \(D(u)\) patch \((x, y ) = \lambda (1, v)\) and 
  \[ z \pm x =  z \pm \lambda, \text{ so in } (z, \lambda, v ) \text{
  coordinates } \tilde{D}_{\pm} \cap D(u) = V(z \pm \lambda) \]
  in other words \(\tilde{\Delta}\) \emph{is} snc on the \(D(u)\) patch (as is
  expected since on \(D(x)\subseteq \AA^3\), \(\Delta\) is smooth). But on
  the \(D(v)\) patch where \((x, y) = \lambda (u, 1)\),
  \begin{equation}
    \label{eq:1st-blp}
    z \pm x =  z \pm \lambda u, \text{ so in } (z, \lambda, u )
    \text{ coordinates } \tilde{D}_{\pm} \cap D(v) = V(z \pm \lambda u)
  \end{equation}
  and here we see the strict transforms intersect along \(V(\lambda u)\) and
  hence fail to be snc 
  .


  A global description of the situation: \(\Bl_Z S\) is isomorphic to \(\AA^1_z
  \times \Bl_0 \AA^2_{xy} \), and \(\tilde{D}_{\pm}\) are 2 copies of \(\Bl_0
  \AA^2_{xy}\) embedded via the maps 
  \[ (\pm x, \mathrm{id}) : \Bl_0 \AA^2_{xy} \to  \AA^1_z \times \Bl_0
  \AA^2_{xy} \] where the map \(\pm x: \Bl_0 \AA^2_{xy} \to \AA^1_z\) really
  means the composition \(\Bl_0 \AA^2_{xy} \to \AA^2_{xy} \xrightarrow{\pm x}
  \AA^1_z\). From this perspective \(\tilde{D}_{+} \cap \tilde{D}_{-} \) is the
  preimage of \(V(x)\) under the blowup map \(\Bl_0 \AA^2_{xy} \to \AA^2_{xy}\),
  the union \(\PP^1_{xy} \cup \AA^1_y\) glued along the points \([0,1] \in
  \PP^1_{xy}\) and \( 0 \in \AA^1_y\). Let \(p \) denote the point in
  \(\PP^1_{xy} \cap \AA^1_y\). Equivalently \(\Sing (\tilde{D}_{+} \cap
  \tilde{D}_{-})\) consists of a single closed point which we call \(p\).

  This discussion shows that the snc locus of \((\Bl_Z S, \tilde{\Delta})\) is 
  \[\snc (\Bl_Z S, \tilde{\Delta}) =  \Bl_Z S \setminus \{p\}\] By work of
  Szab\'o and Bierstone-Milman \cite{MR1322695,MR1440306} (this is where we use
  the characteristic \(0\) hypothesis) there exists a further blowup \(h: X \to
  \Bl_Z S\) such that \(h^{-1}_* \tilde{\Delta} + \Ex h  \) is a simple normal
  crossing divisor \emph{and} \(h\) is an isomorphism over \(\snc (\Bl_Z S,
  \tilde{\Delta})\), that is, \(h\) must be a \(\snc (\Bl_Z S,
  \tilde{\Delta})\)-admissible blowup. Now by \cite[Thm. II.7.17]{MR0463157} we
  know that \(f:= g \circ h: X \to S\) is a blowup at \emph{some} closed
  subscheme \(W \subseteq S\) and since \(g(p) \in Z\) (equivalently)
  \(g^{-1}(U) \subseteq \snc (\Bl_Z S, \tilde{\Delta})\), it must be that \(W
  \subseteq Z\) \emph{as closed sets} (see also \cite[Lem. 5.1.4]{MR308104}),
  hence \(f: X \to S\) is a \(U\)-admissible blowup. 
  
  On the other hand, by a proposition of Erickson \cite[Prop.
  1.4]{Erickson2014DeformationIO}, since \(h^{-1}_* \tilde{\Delta} + \Ex h\) is
  snc the map \(h\) is thrifty and so  the strata of \(f^{-1}_* \Delta =
  h^{-1}_* \tilde{\Delta}\) are in 1-1 birational correspondence with those of
  \(\tilde{\Delta}\), in particular \(f^{-1}_* \Delta\) has a stratum in \(\Ex
  f\).
  
  While the application of \cite{MR1322695,MR1440306} is heavy-handed for this
  toy example, we point out that \(h\) is not simply the blowup at \(p\) as one
  might initially guess: starting from \eqref{eq:1st-blp}, blowing up the origin
  \( 0 \in \AA^3_{z\lambda u}\) and introducing blowup coordinates
  \[ \Bl_0 \AA^3_{z\lambda u} = \{((z,\lambda, u), [r,s,t]) \in \AA^3_{z\lambda
  u}  \times \PP^2_{rst} \, | \,(z,\lambda, u) \propto (r,s,t) \}  \] we note
  that since \(V((z-\lambda u) \cdot (z+\lambda u))\) is smooth on \(D(z)\) we
  can check that the strict transform remains smooth on the \(D(r)\) patch. We
  will investigate the \(D(s)\) patch --- by symmetry of \(\lambda, u\) in the
  equation \((z-\lambda u) \cdot (z+\lambda u)\) the situation is similar on the
  \(D(t)\) patch. On \(D(s)\) we have \((z,\lambda, u)= \mu (r, 1, t)\) and so 
  \[ z \pm \lambda u = \mu r \pm \mu^2 t = \mu (r \pm \mu t) \] Here \(V(\mu)\)
  is a copy of the exceptional divisor of \(\Bl_0 \AA^3_{z\lambda u} \to
  \AA^3_{z\lambda u}\) but we are still left with strict transforms \((r \pm \mu
  t)\) of exactly the same form as \(z \pm \lambda u\); in other words, blowing
  up \( 0 \in \AA^3_{z\lambda u}\) does not help! This is quite similar to the
  classical fact that blowing up the origin of the pinch point \(V(z^2 - \lambda
  u^2 ) \subseteq \AA^3_{z\lambda u}\) gives another pinch point singularity. In
  fact, since \((z-\lambda u) \cdot (z+\lambda u) = z^2 - \lambda^2 u^2\) our
  example is a double cover of the pinch point (that is, it is the preimage of
  the pinch point with respect to \((z,\lambda, u) \mapsto (z,\lambda^2, u)\)).

\end{example}

\end{document}